\newtheorem{thm}{Theorem}[section]
\newtheorem{prop}[thm]{Proposition}
\newtheorem{cor}[thm]{Corollary}
\newtheorem{lem}[thm]{Lemma}
\theoremstyle{definition}
\newtheorem{dfn}[thm]{Definition}
\newtheorem{example}[thm]{Example}
\newtheorem{rmk}[thm]{Remark}
\newtheorem{nt}[thm]{Notation}
\numberwithin{equation}{section}
\newcommand{\cH}{\mathcal{H}}
\newcommand{\nphi}{\mathfrak{n}_\varphi}
\newcommand{\mphi}{\mathfrak{m}_\varphi}
\newcommand{\Dom}{\textrm{Dom}}
\newcommand{\vertt}{\vert\!\vert \! \vert }
\newcommand{\sS}{\mathcal{S}}
\title{The $L^p$-Fourier transform on locally compact quantum groups}
\author{Martijn Caspers}
\address{Radboud Universiteit Nijmegen, IMAPP, FNWI, Heyendaalseweg 135, 6525 AJ Nijmegen,
the Netherlands}
\email{caspers@math.ru.nl}
\date{ \noindent May 10th, 2011. \\
{\it Keywords:} Locally compact quantum groups, Fourier transform, Interpolation spaces. \\
{\it 2000 Mathematics Subject Classification numbers:}  20G42, 20N99, 47N99.
}
\begin{document}
 
\begin{abstract} Using interpolation properties of non-commutative $L^p$-spaces associated with an arbitrary von Neumann algebra, we define a $L^p$-Fourier transform $1 \leq p \leq 2$ on locally compact quantum groups. We show that the Fourier transform determines a distinguished choice for the interpolation parameter as introduced by Izumi. We define a convolution product in the $L^p$-setting and show that the Fourier transform turns the convolution product into a product.
\end{abstract}

\maketitle

\section{Introduction}

The Fourier transform is one of the most powerful tools coming from abstract harmonic analysis.  Many classical applications, in particular in the direction of $L^p$-spaces, can be found in for example \cite{Gra}.  Here we extend this tool by giving a definition of a Fourier transform on the non-commutative $L^p$-spaces associated with a locally compact quantum group. This gives a link between quantum groups and non-commutative measure theory.

\vspace{0.3cm}

Recall that the Fourier transform on locally compact abelian groups can be defined in an $L^p$-setting for $p$ any real number between 1 and 2. This is done in the following way. Let $G$ be a locally compact group and let $\hat{G}$ be its Pontrjagin dual. For a $L^1$-function $f$ on $G$, we define its Fourier transform $\hat{f}$ to be the function on $\hat{G}$, which is defined by 
\begin{equation}\label{EqnIntroFT}
\hat{f}(\pi) = \int f(x) \pi(x) d_lx, \qquad \pi \in \hat{G}.
\end{equation}
Then $\hat{f}$ is a continuous function on $\hat{G}$ vanishing at infinity. So we can consider this transform as a bounded map $\mathcal{F}_1: L^1(G) \rightarrow L^\infty(\hat{G})$. 
The Plancherel theorem yields that if $f$ is moreover a $L^2$-function on $G$, then $\hat{f}$ is a $L^2$-function on $\hat{G}$ and this map extends to a unitary map $\mathcal{F}_2: L^2(G) \rightarrow L^2(\hat{G})$.

It is known that the Fourier transform can be generalized in a $L^p$-setting by means of the Riesz-Thorin theorem, see \cite{BerghLof}. The statement of this theorem directly implies the following.  For any $p$, with  $1 \leq p \leq 2$, the linear map 
$
L^1(G) \cap L^2(G) \rightarrow L^2(\hat{G}) \cap L^\infty(\hat{G}): f \mapsto \hat{f}
$
 extends uniquely to a bounded map
\[
\mathcal{F}_p: L^p(G) \rightarrow L^q(\hat{G}), \qquad  \frac{1}{p} + \frac{1}{q} = 1.
\]  
This map $\mathcal{F}_p$ is known as the $L^p$-Fourier transform. 
 
\vspace{0.3cm}

Quantum groups have been around for quite some time and have been studied in many different guises. From the 80'ies onwards quantum groups are studied in an operator algebraic approach. In particular, a satisfactory C$^\ast$-algebraic definition of a compact  quantum group has been given by Woronowicz \cite{Wor}. 

Around 2000 locally compact quantum groups were introduced by Kustermans and Vaes \cite{KusV}, \cite{KusVII}, see also \cite{DaeleLcqg}. Their definitions include a C$^\ast$-algebraic one and a von Neumann algebraic one. Since their introduction many aspects of abstract harmonic analysis have been given a suitable extension for quantum groups. In particular, the Pontrjagin duality theorem has been generalized to locally compact quantum groups. In fact, this was one of the main motivations for their definition. So every locally compact quantum group admits a (Pontrjagin) dual quantum group such that the double dual is isomorphic to the originial quantum group.  

\vspace{0.3cm}

Since we now have a von Neumann algebraic interpretation for quantum groups at hand, it is natural to ask if the $L^p$-Fourier transform can be defined in this context. This is for two reasons. First of all, this framework studies quantum groups in a measurable setting which appeals to a more general interest: what links can be found between on one hand non-commutative measure spaces, in particular non-commutative $L^p$-spaces, and on the other hand the  theory of quantum groups.  The $L^p$-Fourier transform studied in the present paper establishes such a link. 
Secondly, the existence of a Pontrjagin dual is always guaranteed in the Kustermans-Vaes setting. This is an essential ingredient for defining Fourier transforms. 

\vspace{0.3cm}

The $L^1$- and $L^2$-Fourier transform  already appear in the present theory of quantum groups. In fact, they are implicitly used to define duals of quantum groups. Let us comment on this.

First of all, the $L^2$-Fourier transform is implicitly used in the construction of the Pontrjagin dual of a (von Neumann algebraic) quantum group. For the classical case of a locally compact abelian group $G$, let $(L^\infty(G), \Delta_G)$ be the usual quantum group associated with it. Its dual is given by $(\mathcal{L}(G), \hat{\Delta}_G)$, where $\mathcal{L}(G)$ is the group von Neumann algebra of $G$. This structure is spatially isomorphic to $(L^\infty(\hat{G}), \Delta_{\hat{G}})$ be means of the $L^2$-Fourier transform. That is $L^\infty(\hat{G}) = \mathcal{F}_2 \mathcal{L}(G)\mathcal{F}_2^{-1}$ and similarly the coproduct and other concepts translate. 

Secondly, in a paper by Van Daele \cite{DaeleFourier} he explains how the Fourier transform should be interpreted on the algebraic level of quantum groups. In his concluding remarks he suggests to study this transform in the operator algebraic framework. Here, this investigation is carried out. We take Van Daele's definition, which agrees with the classical transform (\ref{EqnIntroFT}), as a starting point for defining a $L^2$-Fourier transform in the operator algebraic framework.
 
Finally, an operator algebraic interpretation of the Fourier transform can be found in \cite{KahngFourier}. The main ideas for our $L^2$-Fourier transform first appear  here. However, the suggested Fourier transform \cite[Definition 3]{KahngFourier} is well-defined only if the Haar weights of a quantum group are states, i.e. if the quantum group is compact. In the more general situation, one has to give a more careful definition, which we work out in Section \ref{SectFT}.

\vspace{0.3cm}

The present paper is related to a
collection of papers studying module structures of $L^p$-spaces associated to the Fourier algebra of locally compact groups \cite{Daws}, \cite{DawsRunde}, \cite{ForLeeSam}. These papers are based on the theory of non-commutative
$L^p$-spaces associated to arbitrary (not necessarily semi-finite)
von Neumann algebras, which we  recall below.

 When dealing with these spaces, we are confronted with the following obstruction. For classical $L^p$-spaces associated with a measure space $X$, there is a clear understanding of the the intersections of $L^p$-spaces by means of disjunction of sets. So $L^p(X) \cap L^{p'}(X)$ gives the intersection of $L^p(X)$ and  $L^{p'}(X)$. For non-commutative $L^p$-spaces it is more difficult to find the intersection of two such spaces. In fact, there is a choice which determines the intersection and which depends on a {\it complex interpolation parameter} $z \in \mathbb{C}$. In \cite{ForLeeSam} the parameter $z = -1/2$ is used, whereas \cite{Daws} focuses on the case $z = 0$ in order to define module actions. In the final remarks of \cite{DawsRunde}, it is questioned which parameter would fit best for quantum groups.

 One of the results of the present paper is that to define a $L^p$-Fourier transform, one is obliged to choose the parameter $z = -1/2$. We also determine intersections of the $L^1$- and $L^2$-space and of the $L^2$- and $L^\infty$-space associated with a von Neumann algebra for this parameter, which are  natural spaces.

\subsection*{Structure of the paper}

  In Section \ref{SectLp} we recall the definition of non-commutative $L^p$-spaces and introduce the complex interpolation parameter $z \in \mathbb{C}$. We specialize the theory for  $z = -1/2$ and introduce short hand notation. In Sections \ref{SectIntersections} - \ref{SectApplications}, we only work with $L^p$-spaces with respect to this parameter. The justification for this specialization is given in the final chapter.

As indicated, the study of the  intersections of $L^p$-spaces becomes more intricate in the non-commutative setting. 
In Section \ref{SectIntersections} we determine the intersections of  $L^1$- and $L^2$-space and of the $L^2$- and $L^\infty$-space associated with a von Neumann algebra. These intersections turn out to be well-known spaces in the theory of quantum groups. This gives a confirmation that our choice for the interpolation parameter made at the beginning is a natural one. Moreover,   it gives the necessary ammunition to apply the re-iteration theorem. We warn the reader that the contents of Section \ref{SectIntersections} are relatively technical and if one is more interested in Fourier theory on quantum groups, one can skip Section \ref{SectIntersections} at  first reading.

Section \ref{SectLcqg} recalls the definition of a locally compact quantum group as given by Kustermans and Vaes. We give the von Neumann algebraic definition. 

In Section \ref{SectFT} we define the $L^p$-Fourier transform. We start with the $L^1$- and $L^2$-theory and then obtain the $L^p$-Fourier transform ($1 \leq p \leq 2$) through the complex interpolation method, a method similar to the Riesz-Thorin theorem mentioned in the introduction. 

In Section \ref{SectApplications} we define a convolution product in the $L^p$-setting and show that the Fourier transform turns the convolution product into a product.  

Finally, in Section \ref{SectSUTwo}, we prove that the interpolation parameter used in Sections \ref{SectIntersections} - \ref{SectApplications} is distinguished. That is, we prove that given the $L^2$-Fourier transform, there is only one choice for the interpolation parameter that allows an $L^p$-Fourier transform. This justifies our choice for this  parameter made in the beginning.

 \subsection*{Notations and conventions}

Throughout this paper, let $M$ be a von Neumann algebra and $\varphi$ a normal, semi-finite, faithful weight on $M$. We adopt the standard notations from \cite{TakII}. So $\mathfrak{n}_\varphi = \{ x
\in M \mid \varphi(x^\ast x) < \infty \}, \mathfrak{m}_\varphi =
\nphi^\ast \nphi$. We denote $\nabla, J, \sigma$ for the modular
operator, modular conjugation and modular automorphism group associated with $\varphi$.  We denote $\mathcal{T}_\varphi$ for the Tomita
algebra defined by
\[
 \mathcal{T}_\varphi = \left\{ x \in M \mid x \textrm{ is analytic w.r.t. } \sigma   \textrm{ and } \forall z \in \mathbb{C}: \sigma_z (x) \in \mathfrak{n}_\varphi \cap \mathfrak{n}_\varphi^\ast\right\}.
\]
Let $(\cH, \pi, \Lambda)$ be the GNS-representation of $M$ with respect to $\varphi$. Note that $M$ can be considered as acting on $\cH$ and therefore we omit the map $\pi$ if possible. For $x \in \mphi$ and for $a \in M$ analytic with respect to $\sigma$, we have $a x \in \mphi$ and $xa \in \mphi$ and
\[
\varphi(ax) = \varphi(x \sigma_{-i}a).
\]


 For a subset $A \subseteq M$, we denote $A^+$ for the positive elements in $A$. Similarly, $M_\ast^+$ denotes the space of positive normal functionals on $M$. Let $\omega \in M_\ast$. We donote $\overline{\omega} \in M_\ast$ for the functional defined by $\overline{\omega}(x) = \overline{\omega(x^\ast)}, x\in M$. For $y \in M$, we denote $y\omega$ and $\omega y$ for the normal functionals defined respectively by $(y \omega)(x) = \omega(xy)$ and $(\omega y)(x) = \omega(yx)$ with $x \in M$. Inner products on a Hilbert spaces are linear in the first entry and anti-linear in the second. Suppose that $M$ acts on a Hilbert space $\cH$ and let $\xi, \eta \in \cH$. We denote $\omega_{\xi, \eta}$ for the normal functional defined by $\omega_{\xi, \eta}(x) = \langle x \xi, \eta \rangle$.  The character $\iota$ will always stand for the identity homomorphism. If $x$ is a preclosed operator, we use $[x]$ for its closure.

\section{Non-commutative $L^p$-spaces}\label{SectLp} 

To any von Neumann algebra $M$, there is a way to associate a non-commutative $L^p$-space to  it. In fact there are many ways to do this. If $M$ is semi-finite, i.e. it admits a normal, semi-finite, faithful trace $\tau$, then one can define $L^p(M)$ as the space of closed, densely defined operators $x$ affiliated with $M$ for which if $\vert x \vert = \int_{[0,\infty)} \lambda dE_\lambda$ is the spectral decomposition of $\vert x \vert$, then
\[
\Vert x \Vert_p := \left( \sup_{n \in \mathbb{N}}  \tau \left( \int_{[0,n]} \lambda^p dE_\lambda \right) \right)^{1/p}< \infty.
\] 
If $M$ is abelian, we recover the classical spaces $L^p(X)$ for a certain measure space $X$.

Since the introduction of Tomita-Takesaki theory, $L^p$-spaces have been defined for von Neumann algebras that are not semi-finite. 
Definitions of non-commutative $L^p$-spaces have been given by Haagerup \cite{HaaLps}, \cite{TerpI},  Hilsum \cite{Hilsum}, Terp \cite{TerpII} and Izumi \cite{Izumi}. The definitions can be shown te be equivalent. That is, the $L^p$-spaces obtained by the various definitions are isometrically isomorphic  Banach spaces. 
For a good introduction to this theory, we refer to \cite{TerpI}, where a comparison of  Haagerup's definition and Hilsum's definition is made. 

\vspace{0.3cm}

Here we mainly use Izumi's definition \cite{Izumi} which is abstract in nature. He defines $L^p$-spaces associated with $M$ by means of the complex interpolation method; a method that admits a property that is reminiscent of the Riesz-Thorin theorem, see the introduction. It is for this reason that Izumi's definition is the most suitable context to work in.

 Drawback of this context is that the more concrete approach of the other definitions is absent. Whenever it feels appropriate we comment on this.

\subsection{The complex interpolation method}\label{SubSectInt}

We recall the complex interpolation method as explained in \cite[Section 4.1]{BerghLof}.

\begin{dfn}
Let $E_0, E_1$ be Banach spaces. The couple $(E_0, E_1)$ is called
a compatible couple (of Banach spaces) if $E_0$ and $E_1$ are continuously
embedded into a Banach space $E$.  
\end{dfn}
Note that we suppress $E$ in the notation $(E_0, E_1)$. We can consider the spaces $E_0 \cap E_1$ and $E_0 + E_1$ interpreted within $E$ and equip them with norms
\[
\begin{array}{ll}
\Vert x \Vert_{E_0 \cap E_1} = \max \{ \Vert x \Vert_{E_0}, \Vert x \Vert_{E_1} \}, & x \in E_0 \cap E_1, \\
\Vert x \Vert_{E_0+E_1} = \inf \{ \Vert x_0 \Vert_{E_0} +  \Vert x_1 \Vert_{E_1}  \mid x_0 + x_1 = x \},&  x \in E_0 + E_1,
\end{array}
\]
which make them Banach spaces. In that case we can consider $E_0$ and $E_1$ as subspaces of $E_0 + E_1$.  

\begin{dfn}
A morphism between compatible couples $(E_0, E_1)$ and $(F_0, F_1)$ is a bounded map $T: E_0 + E_1 \rightarrow F_0 + F_1$ such that for any $j \in \{0,1\}$,  $T(E_j) \subseteq F_j$ and the restriction $T: E_j \rightarrow F_j$ is bounded. 
\end{dfn}

\begin{rmk}\label{RmkCptMor}
Let $(E_0, E_1)$ and $(F_0, F_1)$ be compatible couples. If $T_0: E_0 \rightarrow F_0, T_1: E_1 \rightarrow F_1$ are bounded maps such that $T_0$ and $T_1$ agree on $E_0 \cap E_1$, then we call $T_0$ and $T_1$ compatible morphisms. In this case, there is a unique bounded map $T: E_0 + E_1 \rightarrow F_0 + F_1$. This gives a way to find morphisms of compatible couples. 
\end{rmk}

Now we describe the complex interpolation method. Let $(E_0, E_1)$ be a compatible couple. 
  Let $\sS = \{ z \in \mathbb{C} \mid 0 \leq {\rm Re}(z)
\leq 1 \}$ and let $\sS^\circ$ denote its interior. Let
$\mathcal{G}(E_0, E_1)$ be the set of functions $f: \sS \rightarrow
E_0 + E_1$ such that
\begin{itemize}
 \item[(1)] $f$ is bounded and continuous on $\sS$ and analytic on $\sS^\circ$;
\item[(2)] For $t \in \mathbb{R}, j\in \{0, 1\}$, $f(it + j) \in E_j$ and $t \mapsto f(it+j)$ is continuous and bounded with respect to the norm on $E_j$;
 \item[(3)] For $j\in \{0, 1\}$, $\Vert f(it+j) \Vert_{E_j} \rightarrow 0$ as $t \rightarrow \infty$.
\end{itemize}
Note that at this point our notation is different from
\cite{BerghLof} and \cite{Izumi}, where $\mathcal{G}$ is denoted by
$\mathcal{F}$, which we reserve for the Fourier transform. For $f \in \mathcal{G}(E_0, E_1)$, we define a
norm 
\[
\vertt f \vertt = \max \{ \sup \Vert f(it) \Vert_{E_0}, \sup
\Vert f(it+1) \Vert_{E_1} \}.
\]
 Let $\theta \in [0,1]$. We define
$(E_0, E_1)_{[\theta]} \subseteq E$ to be the space $\{
f(\theta) \mid f \in  \mathcal{G}(E_0, E_1) \}$ with norm
\[
\Vert x \Vert_{[\theta]} = \inf \{ \vertt f \vertt \mid f(\theta)
= x, f \in \mathcal{G}(E_0, E_1) \}.
\]
 With this norm, $(E_0, E_1)_{[\theta]}$ is a Banach space \cite[Theorem 4.1.2]{BerghLof}. 

\begin{dfn} \label{DfnMorphCpt} 
The assignment from compatible couples of Banach spaces to Banach spaces which is given by $C_\theta: (E_0, E_1)  \rightarrow (E_0, E_1)_{[\theta]}$ is called the complex interpolation method (at parameter $\theta \in [0,1]$). $(E_0, E_1)_{[\theta]}$ is called a complex interpolation space. 
\end{dfn}

The following Riesz-Thorin-like theorem plays a central role in the present paper. It gives the functorial property of the complex interpolation method. 

\begin{thm}[Theorem 4.1.2 of \cite{BerghLof}]\label{ThmCplxInterpolationBound}
Let $\theta \in [0,1]$. Let $T$ be a morphism between compatible couples $(E_0, E_1)$ and $(F_0, F_1)$. Then, it restricts to a bounded linear map $T: (E_0, E_1)_{[\theta]} \rightarrow (F_0, F_1)_{[\theta]}$. The norm is bounded by $\Vert T \Vert \leq \Vert T: E_0 \rightarrow F_0 \Vert^{1-\theta} \Vert T: E_1 \rightarrow F_1 \Vert^{\theta} $.
\end{thm}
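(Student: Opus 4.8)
The plan is to run the classical ``three lines'' scaling argument, transplanted to the vector-valued analytic functions that define the complex interpolation spaces. Write $M_0 = \Vert T : E_0 \rightarrow F_0 \Vert$ and $M_1 = \Vert T : E_1 \rightarrow F_1 \Vert$, and assume first that $M_0, M_1 > 0$. Since $(E_0, E_1)_{[\theta]} \subseteq E_0 + E_1$ and $T$ maps $E_0 + E_1$ into $F_0 + F_1$, the element $Tx$ is defined for every $x \in (E_0, E_1)_{[\theta]}$; the task is to show $Tx \in (F_0, F_1)_{[\theta]}$ with the stated norm estimate. Fix $x$ and, given $\varepsilon > 0$, choose $f \in \mathcal{G}(E_0, E_1)$ with $f(\theta) = x$ and $\vertt f \vertt \leq \Vert x \Vert_{[\theta]} + \varepsilon$. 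I would then set
\[
g(z) = M_0^{\,z-1}\, M_1^{\,-z}\, (T \circ f)(z), \qquad z \in \sS,
\]
where $M_j^{\,w} = e^{w \log M_j}$ uses the real logarithm, so that $z \mapsto M_0^{z-1} M_1^{-z}$ is entire, nowhere vanishing, and of modulus bounded above and below on $\sS$.

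The core of the proof is to check that $g \in \mathcal{G}(F_0, F_1)$. Because $T : E_0 + E_1 \rightarrow F_0 + F_1$ is bounded and linear, $T \circ f$ is again bounded and continuous on $\sS$ and analytic on $\sS^\circ$ with values in $F_0 + F_1$, and multiplying by the bounded scalar factor preserves these properties, which gives condition~(1). For conditions~(2) and~(3): if $\mathrm{Re}(z) = j$, then $f(z) \in E_j$, hence $(T \circ f)(z) \in F_j$ with $\Vert (T \circ f)(z) \Vert_{F_j} \leq M_j \Vert f(z) \Vert_{E_j}$, so $g(z) \in F_j$; the map $t \mapsto g(it+j)$ inherits continuity and boundedness in the $F_j$-norm from $f$, and it tends to $0$ as $t \to \infty$ since $\Vert f(it+j) \Vert_{E_j} \to 0$ and the scalar factor stays bounded.

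It then remains to estimate $\vertt g \vertt$. On the line $\mathrm{Re}(z) = 0$ one has $|M_0^{z-1} M_1^{-z}| = M_0^{-1}$, so $\Vert g(it) \Vert_{F_0} \leq M_0^{-1} M_0 \Vert f(it) \Vert_{E_0} \leq \vertt f \vertt$; on the line $\mathrm{Re}(z) = 1$ one has $|M_0^{z-1} M_1^{-z}| = M_1^{-1}$, so $\Vert g(1+it) \Vert_{F_1} \leq \vertt f \vertt$. Hence $\vertt g \vertt \leq \vertt f \vertt$. Since $g(\theta) = M_0^{\theta-1} M_1^{-\theta} \, Tx$, we get $Tx = M_0^{1-\theta} M_1^{\theta}\, g(\theta) \in (F_0, F_1)_{[\theta]}$ and
\[
\Vert Tx \Vert_{[\theta]} = M_0^{1-\theta} M_1^{\theta} \Vert g(\theta) \Vert_{[\theta]} \leq M_0^{1-\theta} M_1^{\theta} \vertt g \vertt \leq M_0^{1-\theta} M_1^{\theta} \big( \Vert x \Vert_{[\theta]} + \varepsilon \big).
\]
Letting $\varepsilon \downarrow 0$ gives the desired bound. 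The degenerate case $M_0 = 0$ or $M_1 = 0$ is dispatched by replacing $M_j$ by $M_j + \delta$ throughout, obtaining $\Vert Tx \Vert_{[\theta]} \leq (M_0 + \delta)^{1-\theta}(M_1+\delta)^{\theta} \Vert x \Vert_{[\theta]}$, and letting $\delta \downarrow 0$. The only genuinely delicate point is the verification that $T \circ f$ is an analytic $(F_0 + F_1)$-valued function and that conditions~(2)--(3) survive composition with $T$ and multiplication by the exponential factor; once that is in place, the rest is bookkeeping of the scalar weight.
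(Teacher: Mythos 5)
Your proof is correct, and it is essentially the canonical argument for this theorem: weight $T\circ f$ by the scalar factor $M_0^{z-1}M_1^{-z}$, verify membership in $\mathcal{G}(F_0,F_1)$, and read off the boundary estimates. The paper itself does not prove this statement but quotes it directly from Bergh--L\"ofstr\"om (Theorem 4.1.2), and your argument is precisely the proof given in that cited source, so there is nothing further to reconcile.
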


If we let $C_\theta$ of Definition \ref{DfnMorphCpt} act on the morphisms $T: (E_0, E_1) \rightarrow (F_0, F_1)$ of compatible couples by assigning its restriction $T: (E_0, E_1)_{[\theta]} \rightarrow (F_0, F_1)_{[\theta]}$ to it, we see that $C_\theta$ is a functor. 

\begin{rmk}
Using the notation of Remark \ref{RmkCptMor}, the compatible    morphisms $T_0, T_1$ give rise to a morphism $C_\theta(T): (E_0, E_1)_{[\theta]} \rightarrow (F_0, F_1)_{[\theta]}$ on the interpolation spaces with norm $\Vert C_\theta(T) \Vert \leq  \Vert T_0 \Vert^{1-\theta} \Vert T_1 \Vert^{\theta} $.
\end{rmk}

We will need the following useful fact, see \cite[Theorem 4.2.2]{BerghLof}.

\begin{lem}\label{LemDenseInt}
Let $(E_0, E_1)$ be a compatible couple and $ \theta \in [0,1]$. $E_0 \cap E_1$ is dense in $(E_0, E_1)_{[\theta]}$. 
\end{lem}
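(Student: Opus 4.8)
The plan is to follow the argument of \cite[Theorem 4.2.2]{BerghLof}. We may assume $0 < \theta < 1$, the cases $\theta \in \{0,1\}$ being handled in the same way.

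Fix $x \in (E_0, E_1)_{[\theta]}$ and $\varepsilon > 0$, and choose $f \in \mathcal{G}(E_0, E_1)$ with $f(\theta) = x$ and $\vertt f \vertt \leq \Vert x \Vert_{[\theta]} + \varepsilon$. The first step is a regularisation. Replacing $f$ by $f_\delta(z) := e^{\delta(z-\theta)^2} f(z)$ one still has $f_\delta \in \mathcal{G}(E_0, E_1)$ and $f_\delta(\theta) = x$, while $\vertt f_\delta \vertt \leq e^{\delta}\,\vertt f \vertt$; moreover the traces $t \mapsto f_\delta(it)$ and $t \mapsto f_\delta(1+it)$ now decay like a Gaussian on their lines, hence are uniformly continuous and norm-integrable. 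So, enlarging $\vertt f \vertt$ by an arbitrarily small amount, we may assume $f$ has this extra decay.

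The heart of the matter is then to approximate $f$, in the norm $\vertt \cdot \vertt$, by functions $g \in \mathcal{G}(E_0, E_1)$ with $g(\theta) \in E_0 \cap E_1$; then $\Vert x - g(\theta)\Vert_{[\theta]} \le \vertt f - g \vertt$ is small and $g(\theta) \in E_0 \cap E_1$ is the required approximant. I would build $g$ out of the boundary data of $f$. Using an \emph{analytic} reproducing kernel $K_0, K_1$ of the strip one has, on the open strip, $f(z) = \int_{\mathbb{R}} K_0(z,t) f(it)\, dt + \int_{\mathbb{R}} K_1(z,t) f(1+it)\, dt$, the extra term vanishing by the three-lines lemma since $f$ is bounded. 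Choosing a smooth partition of unity $\sum_k \chi_k \equiv 1$ on a large interval, with each $\chi_k$ supported in a small interval, one sets $g(z) := \sum_k c_k^0\, \psi_k^0(z) + \sum_k c_k^1\, \psi_k^1(z)$, where $c_k^0 := \int \chi_k(t) f(it)\, dt \in E_0$, $c_k^1 := \int \chi_k(t) f(1+it)\, dt \in E_1$ and $\psi_k^j(z) := \int \chi_k(t) K_j(z,t)\, dt$ are scalar functions in $\mathcal{G}(\mathbb{C}, \mathbb{C})$; uniform continuity, the decay, and the reproducing property make $\vertt f - g \vertt$ as small as we like. The point of the construction is that $g(\theta)$ is now a \emph{finite} linear combination of vectors of $E_0$ and of $E_1$.

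The main obstacle is precisely the last clause: a finite sum of vectors of $E_0$ and of $E_1$ lies only in $E_0 + E_1$, not automatically in $E_0 \cap E_1$, and no holomorphic reproducing kernel can be made to ``decouple'' the two boundary lines (a holomorphic function vanishing on one boundary line vanishes identically). To close the gap one must also arrange that $g$ has its \emph{boundary} traces in $E_0 \cap E_1$ on both lines; via the Poisson representation of $g$ this forces $g(\theta) \in E_0 \cap E_1$. When $E_0 \cap E_1$ happens to be dense in each of $E_0$ and $E_1$ this is immediate from the construction above; in general one argues, as in \cite[Theorem 4.2.2]{BerghLof}, via duality: a functional on $(E_0, E_1)_{[\theta]}$ annihilating $E_0 \cap E_1$ lifts, through the metric quotient map $f \mapsto f(\theta)$ of $\mathcal{G}(E_0, E_1)$ onto $(E_0, E_1)_{[\theta]}$, to a pair of $E_j^\ast$-valued measures on the two lines which, by the analyticity constraints, must vanish. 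This gives density of $E_0 \cap E_1$ in $(E_0, E_1)_{[\theta]}$.
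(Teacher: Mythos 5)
Your proposal correctly identifies the crux but does not overcome it; note also that the paper itself does not reprove this lemma, it simply quotes \cite[Theorem 4.2.2]{BerghLof}, so what matters is whether your argument is self-contained — and it is not. The reproducing-kernel construction only produces $g(\theta)$ as a finite sum of $E_0$-vectors and $E_1$-vectors multiplied by scalar analytic functions, i.e.\ an element of $E_0+E_1$, and neither of your two repairs closes this. Arranging that the boundary traces of $g$ lie in $E_0\cap E_1$ is not achieved by your construction, and even if it were, it would not suffice: an interior value of the Poisson (or Cauchy) representation of intersection-valued boundary data is only a limit in $E_0+E_1$ of Riemann sums from $E_0\cap E_1$, hence lies in the closure of the intersection in $E_0+E_1$, not in $E_0\cap E_1$ itself. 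The claim that the case where $E_0\cap E_1$ is dense in each $E_j$ is ``immediate'' also fails as stated: if you replace $c_k^0$ by a nearby vector of $E_0\cap E_1$, the error is controlled only in the $E_0$-norm, whereas the error function $(c_k^0-\tilde c_k^0)\psi_k^0$ must also be controlled in the $E_1$-norm on the line ${\rm Re}\,z=1$ (where $\psi_k^0$ does not vanish), and $c_k^0$ need not even belong to $E_1$; so $\vertt f-g\vertt$ is not small. Finally, the duality argument is an appeal to the very statement being proved: the lift of an annihilating functional through the quotient map $f\mapsto f(\theta)$ is a functional on $\mathcal{G}(E_0,E_1)$, which is not canonically a pair of $E_j^\ast$-valued measures on the boundary lines (a vector-valued Riesz representation of this kind is unavailable in general), and ``by the analyticity constraints, must vanish'' is exactly the content that requires proof.

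The missing device in \cite[Lemma 4.2.3]{BerghLof} is contour shifting, which is how one ``decouples'' the two boundary lines without any analytic kernel supported on one of them. After damping by $e^{\delta z^2}$ so that $f$ decays like a Gaussian on the strip, one approximates $f$ in $\vertt\cdot\vertt$ by functions of the form $e^{\delta z^2}\sum_k a_k e^{i\lambda_k z}$, where each coefficient $a_k$ is a line integral of $f$ against a scalar entire kernel along a vertical line. By Cauchy's theorem and the decay at $\pm i\infty$, the contour may be moved from ${\rm Re}\,z=0$ to ${\rm Re}\,z=1$; the first representation exhibits $a_k$ as a Bochner integral of a continuous $E_0$-valued function, the second as a Bochner integral of an $E_1$-valued function, and since both computations take place inside $E_0+E_1$ the same vector lies in $E_0\cap E_1$. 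Evaluation at $\theta$ of these approximants then gives vectors of $E_0\cap E_1$ close to $x$ in $\Vert\cdot\Vert_{[\theta]}$. Without this (or an equivalent) mechanism for forcing the coefficients into the intersection, your argument establishes only density of $E_0+E_1$-valued approximants, which is vacuous.
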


\subsection{Izumi's $L^p$-spaces}
 In \cite{TerpII} Terp shows that the $L^p$-spaces as introduced by Hilsum can be obtained by applying the complex interpolation method to a specific compatible couple $(M, M_\ast)$, see \cite[Theorem 36]{TerpII}. Izumi \cite{Izumi} realized that there is more than one way to turn $(M, M_\ast)$ into a compatible couple in order to obtain the $L^p$-spaces through interpolation. His idea is to {\it define} non-commutative $L^p$-spaces as complex interpolation spaces of certain compatible structures. It is this definition which we recall here.  

Here, we present the general picture. However, in the larger part of the present paper, we only work with the complex interpolation parameters $z = -1/2$ and $z = 1/2$ (we introduce the parameter in a minute). We will specialize the theory for these parameters in Sections \ref{SectHalf} and \ref{SectGNS} and introduce short hand notation there. 
The more general theory is used in Section \ref{SectSUTwo}, where we prove that there is in principle only one interpolation parameter that allows a $L^p$-Fourier transform, namely $z = -1/2$.   

Fix a von Neumann algebra $M$ with normal, semi-finite, faithful weight $\varphi$. The following construction of $L^p$-spaces can be found in \cite{Izumi}.

\begin{dfn}\label{DfnL}
 For $z \in \mathbb{C}$, we put
\[
 L_{(z)} = \left\{ x \in M \mid
\exists \varphi^{(z)}_x \in M_\ast \textrm{ s.t. } \forall a,b \in \mathcal{T}_\varphi: 
\varphi^{(z)}_x(a^\ast b) = \langle x J \nabla^{\bar{z}} \Lambda(a) \mid J \nabla^{-z} \Lambda(b) \rangle
\right\}.
\]
The number $z \in \mathbb{C}$ will be called the complex interpolation parameter. 
\end{dfn}

\begin{rmk}\label{RmkState}
We will mainly be dealing with the cases $z = -1/2$ and $z = 1/2$. Note that if $\varphi$ is a state, then for any $x \in M$, we see that for $a,b \in \mathcal{T}_\varphi$,
\[\langle x J \nabla^{-1/2} \Lambda(a) \mid J \nabla^{1/2} \Lambda(b) \rangle  =
\langle x J \nabla^{1/2} \Lambda(\sigma_{i}(a)) \mid J \nabla^{1/2} \Lambda(b) \rangle = \varphi(b  x \sigma_{-i}(a^\ast)) =  \varphi(a^\ast b x  ),
\]
and hence $L_{(-1/2)} = M$ and $\varphi^{(-1/2)}_x  = x \varphi$. Similarly, $L_{(1/2)} = M$ and  $\varphi^{(1/2)}_x  =   \varphi x$.
\end{rmk}

The following proposition  implies that there are   plenty of elements contained in $L_{(z)}$.

\begin{prop}[Propostion 2.3 of \cite{Izumi}]
$\mathcal{T}_\varphi^2 = \left\{ ab \mid a,b \in \mathcal{T}_\varphi \right\}$ is contained in $L_{(z)}$.
\end{prop}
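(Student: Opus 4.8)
The plan is to exhibit, for fixed $a,b \in \mathcal{T}_\varphi$ and an arbitrary $z \in \mathbb{C}$, an explicit normal functional witnessing $ab \in L_{(z)}$, and in fact to identify it as a vector functional $\omega_{\xi,\eta}$. Throughout I write $c,d \in \mathcal{T}_\varphi$ for the test elements appearing in Definition \ref{DfnL}, to keep them distinct from $a,b$. I will freely use that $\mathcal{T}_\varphi$ is a $\ast$-subalgebra with $\mathcal{T}_\varphi \subseteq \nphi \cap \nphi^\ast$, so that any product of two (hence of finitely many $\geq 2$) elements of $\mathcal{T}_\varphi$ lies in $\nphi^\ast\nphi = \mphi$; together with the standard facts $\nabla^s\Lambda(v) = \Lambda(\sigma_{-is}(v))$ and $J\Lambda(v) = \Lambda(\sigma_{i/2}(v)^\ast)$ for $v \in \mathcal{T}_\varphi$, $s \in \mathbb{C}$, and the GNS identities $\Lambda(xv) = x\Lambda(v)$, $\langle \Lambda(p) \mid \Lambda(q)\rangle = \varphi(q^\ast p)$.

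First I would rewrite the right-hand side of the identity in Definition \ref{DfnL}. Using the relations above one gets $J\nabla^{\bar z}\Lambda(c) = \Lambda(\sigma_{i(z-1/2)}(c^\ast))$ and $J\nabla^{-z}\Lambda(d) = \Lambda(\sigma_{-i(\bar z+1/2)}(d^\ast))$, both lying in $\Lambda(\mathcal{T}_\varphi)$, whence
\[
\langle ab\, J\nabla^{\bar z}\Lambda(c) \mid J\nabla^{-z}\Lambda(d)\rangle = \varphi\big(\sigma_{i(z+1/2)}(d)\, ab\, \sigma_{i(z-1/2)}(c^\ast)\big),
\]
all factors lying in $\mathcal{T}_\varphi$, so the product lies in $\mphi$ and $\varphi$ is defined on it. Applying $\varphi(\alpha x) = \varphi(x\sigma_{-i}(\alpha))$ (for $\alpha$ analytic, $x \in \mphi$) to slide $\sigma_{i(z+1/2)}(d)$ to the right turns this into $\varphi\big(ab\, \sigma_{i(z-1/2)}(c^\ast d)\big)$. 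The key step is then a ``modular transfer'' identity: for $m,n \in \mathcal{T}_\varphi$ and $w \in \mathbb{C}$,
\[
\varphi\big(m\, \sigma_{iw}(n)\big) = \varphi\big(\sigma_{-iw}(m)\, n\big),
\]
proved by writing the left-hand side as $\langle \nabla^{-w}\Lambda(n) \mid \Lambda(m^\ast)\rangle$, moving $\nabla^{-w}$ across the inner product as its adjoint $\nabla^{-\bar w}$ (legitimate because $\nabla > 0$ and $\Lambda(m^\ast) \in \Lambda(\mathcal{T}_\varphi)$ is analytic for $(\nabla^{it})_t$), and identifying $\nabla^{-\bar w}\Lambda(m^\ast) = \Lambda(\sigma_{i\bar w}(m^\ast))$. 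With $m = ab$, $n = c^\ast d$, $w = z-1/2$ this shows that the right-hand side of Definition \ref{DfnL} equals $\varphi\big(\sigma_{i(1/2-z)}(ab)\, c^\ast d\big)$.

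Finally I would factor the modular image as a product of two Tomita elements, $\sigma_{i(1/2-z)}(ab) = a_1 b_1$ with $a_1 = \sigma_{i(1/2-z)}(a)$ and $b_1 = \sigma_{i(1/2-z)}(b)$, and re-straddle $c^\ast d$: using $\varphi(\alpha x) = \varphi(x\sigma_{-i}(\alpha))$ once more, $\varphi(a_1 b_1\, c^\ast d) = \varphi\big(b_1\,(c^\ast d)\,\sigma_{-i}(a_1)\big)$, and since $\sigma_{-i}(a_1) \in \nphi$ and $b_1^\ast \in \nphi$ this equals $\langle (c^\ast d)\,\Lambda(\sigma_{-i}(a_1)) \mid \Lambda(b_1^\ast)\rangle$. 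Hence the vector functional
\[
\varphi^{(z)}_{ab} := \omega_{\Lambda(\sigma_{-i(z+1/2)}(a)),\, \Lambda(\sigma_{i(\bar z-1/2)}(b^\ast))} \in M_\ast
\]
satisfies $\varphi^{(z)}_{ab}(c^\ast d) = \langle ab\, J\nabla^{\bar z}\Lambda(c) \mid J\nabla^{-z}\Lambda(d)\rangle$ for all $c,d \in \mathcal{T}_\varphi$, so $ab \in L_{(z)}$; as $a,b$ were arbitrary, $\mathcal{T}_\varphi^2 \subseteq L_{(z)}$. (As a check, specializing to $z = -1/2$ reproduces $\varphi^{(-1/2)}_x = x\varphi$ of Remark \ref{RmkState}, and $z = 1/2$ reproduces $\varphi^{(1/2)}_x = \varphi x$.)

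There is no conceptual depth beyond this, but the proof lives or dies on the bookkeeping: at each step one must check that the products in sight lie in $\mphi$ (so that $\varphi$, the GNS formulas, and the cyclicity relation all apply) and that the relevant vectors lie in the domains of the complex powers of $\nabla$. The hypothesis $a,b \in \mathcal{T}_\varphi$ — rather than merely $a,b \in \nphi \cap \nphi^\ast$ — is precisely what supplies this room, since it permits arbitrary complex modular shifts of $a$ and $b$ and lets one split $\sigma_{i(1/2-z)}(ab)$ into two square-integrable factors whose ``middle'' can then absorb $c^\ast d$. The one structurally essential ingredient is the modular transfer identity, which is where the complex value of $z$ and the mismatch between $\nabla^{\bar z}$ and $\nabla^{-z}$ in Definition \ref{DfnL} get reconciled.
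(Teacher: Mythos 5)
Your proof is correct, but note that there is nothing in the paper to compare it with: the statement is imported verbatim as Proposition 2.3 of \cite{Izumi}, and the paper gives no argument for it, so what you have written is a self-contained verification of Izumi's result rather than an alternative to an in-paper proof. The computation itself checks out: the continuations $J\nabla^{\bar z}\Lambda(c)=\Lambda(\sigma_{i(z-1/2)}(c^\ast))$ and $J\nabla^{-z}\Lambda(d)=\Lambda(\sigma_{-i(\bar z+1/2)}(d^\ast))$ are correct for Tomita-algebra elements, each use of $\varphi(\alpha x)=\varphi(x\sigma_{-i}(\alpha))$ is applied to an element of $\mphi$ (which you verify), and your ``modular transfer'' identity is legitimate since $(\nabla^{-w})^\ast=\nabla^{-\bar w}$ and vectors in $\Lambda(\mathcal{T}_\varphi)$ lie in the domain of every complex power of $\nabla$. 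Your closed formula $\varphi^{(z)}_{ab}=\omega_{\Lambda(\sigma_{-i(z+1/2)}(a)),\,\Lambda(\sigma_{i(\bar z-1/2)}(b^\ast))}$ is moreover consistent with the two places where the paper does carry out computations of exactly this genre: the proof of Proposition \ref{PropAltDescr}, where $M\mathcal{T}_\varphi^2\subseteq L_{(-1/2)}$ is obtained by the same sliding of analytic elements and recognition of a vector functional, and Section \ref{SectSUTwo}, where, for $\varphi$ a state, the identity $\varphi^{(z)}_x=\sigma_{-i(z+1/2)}(x)\varphi$ for $x\in\mathcal{T}_\varphi^2$ is derived; one further application of the sliding rule turns your vector-functional expression into precisely that formula, which is a good consistency check. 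Exhibiting the candidate functional as a single vector functional is slightly more than the definition of $L_{(z)}$ demands (normality is then automatic), so this is a strength of your write-up rather than a gap; the only caveat worth keeping in mind is the one you already flag, namely that every intermediate product must be checked to lie in $\mphi$ before $\varphi$ or the GNS pairing is invoked.
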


We are now able to construct Izumi's $L^p$-spaces using the complex interpolation method. First, we define a compatible couple.   For $x\in L_{(z)}$, we define a norm:
\[
\Vert x \Vert_{L_{(z)}} = \max \{ \Vert x \Vert, \Vert \varphi^{(z)}_x \Vert \}.
\] 
We define norm-decreasing injections: 
\[
i_{(z)}^\infty:  \: L_{(z)} \rightarrow M: x \mapsto x; \qquad 
i_{(z)}^1:  \: L_{(z)} \rightarrow M_\ast: x \mapsto \varphi_x^{(z)}.
\]
Using the duals of the maps, we obtain the following diagram. Note that $(i^\infty_{(-z)})^\ast: M^\ast \rightarrow L_{(-z)}^\ast$ is restricted to $M_\ast$.
 
\begin{equation}
 \xymatrix{
 & M_\ast\ar@{^{(}->}[dr]^{(i^\infty_{(-z)})^\ast} & \\
   L_{(z)} \ar@{^{(}->}[ur]^{i^1_{(z)}}\ar@{^{(}->}[dr]_{i^\infty_{(z)}}\ar@{^{(}->}[r]^{i^p_{(z)}}& L^p_{(z)}(M)\ar@{^{(}->}[r]& L_{(-z)}^\ast. \\
& M \ar@{^{(}->}[ur]_{(i^1_{(-z)})^\ast}&}    \label{EqnLpIzu}
\end{equation}
Now, \cite[Theorem 2.5]{Izumi} yields that the outer rectangle of (\ref{EqnLpIzu}) commutes. This turns $(M,M_\ast)$ into a compatible couple of Banach spaces.

\begin{dfn}
For $p \in (1, \infty)$, we define $L^p_{(z)}(M)$ to be the complex interpolation space $(M,M_\ast)_{[1/p]}$. We set $L^1_{(z)}(M) = M_\ast$ and  $L^\infty_{(z)}(M) = M$ .
\end{dfn}
By Lemma \ref{LemDenseInt}, $L_{(z)}$ can be embedded in $L^p_{(z)}(M)$. This map is denoted by $\iota^p_{(z)}$.
Note that by definition of the complex interplation method $L^p_{(z)}(M )$ is a linear subspace of $L_{(-z)}^\ast$.

\begin{nt}\label{NotSubsp}
The map $i_{(z)}^\infty:  \: L_{(z)} \rightarrow M $ is basically the inclusion of a subspace. Therefore, it is   convenient to omit  the map $i_{(z)}^\infty$ in our notation if the norms of the spaces do not play a role in the statement. Similarly, we do not introduce notation for the inclusion of $L^p_{(z)}(M)$ in $L_{(-z)}^\ast$, where $p \in (1, \infty)$.    
\end{nt}

A priori one could think that $L^p_{(z)}(M)$ and  $L^p_{(z')}(M)$ with $z \not = z'$, are different as Banach spaces. However, Izumi proves that they are isometrically isomorphic. Terp \cite{TerpII} considers the case $z = 0$. The main result of \cite{TerpII} is that  $L^p_{(0)}(M)$ is isometrically isomorphic to the $L^p$-spaces by   Hilsum \cite{Hilsum}. We will come back to this Section \ref{SectGNS}.

\begin{thm}[Theorem 3.8 of \cite{Izumi}] \label{ThmIsomLpSpaces}
For $z, z' \in \mathbb{C}$, there is an isometric isomorphism
\[
 U_{p, (z', z)}: L^p_{(z)}(M) \rightarrow  L^p_{(z')}(M), \qquad p \in (1, \infty),
\]
such that for $a \in \mathcal{T}_\varphi^2$,
\begin{equation}\label{EqnLpIsomorphism}
  U_{p, (z', z)}(i^{p}_{(z)} (a)) = i^{p}_{(z')}(\sigma_{i \frac{r'-r}{p}-(s'-s)}(a)),
\end{equation}
where $z = r+is$ and $z' = r'+is'$, $r, r', s, s' \in \mathbb{R}$.  
\end{thm}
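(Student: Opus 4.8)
The plan is to build $U_{p,(z',z)}$ by a Riesz--Thorin-type interpolation of the modular automorphism group $\sigma$, mimicking the classical passage from the $L^1$- and $L^\infty$-pictures to the $L^p$-picture. Throughout write $z=r+is$, $z'=r'+is'$, $c=r'-r$, $d=s'-s$.

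The algebraic core is the identity, valid for every $y\in\mathcal{T}_\varphi^2$,
\[
\varphi^{(z')}_{\sigma_{i(z'-z)}(y)} \;=\; \varphi^{(z)}_y ,
\]
together with its companion $\varphi^{(z)}_{\sigma_u(y)}=\varphi^{(z)}_y\circ\sigma_{-u}$ for real $u$ (so that $y\mapsto\varphi^{(z)}_{\sigma_u(y)}$ is $M_\ast$-isometric). Both are proved by a direct Tomita--Takesaki computation from Definition \ref{DfnL}: writing $\sigma_{i(z'-z)}(y)=\nabla^{-(z'-z)}y\,\nabla^{z'-z}$, inserting this into $\langle\,\cdot\,J\nabla^{\bar z'}\Lambda(a)\mid J\nabla^{-z'}\Lambda(b)\rangle$ for $a,b\in\mathcal{T}_\varphi$, and moving the bounded functions of $\nabla$ through $J$ by means of $J\nabla^{\alpha}=\nabla^{-\alpha}J$ for real $\alpha$ and $J\nabla^{i\beta}=\nabla^{i\beta}J$ for real $\beta$, the powers of $\nabla$ telescope (one uses $r'-c=r$ and $d-s'=-s$) and produce exactly $\varphi^{(z)}_y(a^\ast b)$. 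This identity says that on the analytic elements the map $y\mapsto\sigma_{i(z'-z)}(y)$ is an $L^1$-isometry from the $z$-structure to the $z'$-structure, while on $L^\infty$ (and for real modular arguments) the relevant maps are $\ast$-automorphisms, hence isometric; the $L^p$-statement should come from interpolating a $\zeta$-dependent version of this.

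Concretely, fix the affine function $W(\zeta)=ic\,\zeta-d$ on the strip, for which $W(it)$ is real, $W(1+it)=i(z'-z)-c\,t$ has imaginary part $c$, and $W(1/p)=i\tfrac{r'-r}{p}-(s'-s)$. Given $a\in\mathcal{T}_\varphi^2$ and an interpolating function $f\in\mathcal{G}((M,M_\ast)^{(z)})$ with $f(1/p)=i^p_{(z)}(a)$ and $\vertt f\vertt$ close to $\Vert i^p_{(z)}(a)\Vert_{[1/p]}$, one would like to form
\[
g(\zeta)=\exp\!\big(\varepsilon(\zeta^2-p^{-2})\big)\,\sigma_{W(\zeta)}\big(f(\zeta)\big)
\]
(interpreting $\sigma_{W(\zeta)}$ coefficient-wise on a Calderón representation of $f$), then check that $g\in\mathcal{G}((M,M_\ast)^{(z')})$ with $\vertt g\vertt\le e^{O(\varepsilon)}\vertt f\vertt$ --- using that $\sigma_{W(it)}$ is a $\ast$-automorphism on the edge $\mathrm{Re}\,\zeta=0$, the displayed identity together with the real-argument covariance of $\varphi^{(z')}_{\,\cdot\,}$ on the edge $\mathrm{Re}\,\zeta=1$, and the Gaussian factor for the decay in condition (3) --- and conclude from $g(1/p)=i^p_{(z')}(\sigma_{W(1/p)}(a))$ that $\Vert i^p_{(z')}(\sigma_{W(1/p)}(a))\Vert_{[1/p]}\le\Vert i^p_{(z)}(a)\Vert_{[1/p]}$. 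Interchanging the roles of $z$ and $z'$ (the two modular twists composing to the identity) yields equality; since $i^p_{(z)}(\mathcal{T}_\varphi^2)$ spans a dense subspace of $L^p_{(z)}(M)$ and $U_{p,(z,z')}\circ U_{p,(z',z)}$ is the identity on it, the resulting isometry $U_{p,(z',z)}$ is onto, and $W(1/p)=i\tfrac{r'-r}{p}-(s'-s)$ is precisely formula (\ref{EqnLpIsomorphism}).

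The step I expect to fight hardest with is legitimizing the twisted interpolating function $g$: the operator $\sigma_{W(\zeta)}$ is genuinely unbounded on $M$ for non-real $W(\zeta)$, and neither $\mathcal{T}_\varphi^2$ nor any $\sigma$-analytic subspace is norm-dense in $M$ (equivalently in $E_0$ of the couple), so an arbitrary interpolating function for $i^p_{(z)}(a)$ cannot be taken to have $\sigma$-analytic values and cannot simply be twisted coefficient-wise. One must therefore either work along the interior of the strip, where the values of $f$ lie in genuine spaces $L^q_{(z)}(M)$ with $q<\infty$ on which the modular group acts strongly continuously with dense analytic vectors, or carry out the comparison in a concrete (Haagerup-type) realization of the spaces $L^p_{(z)}(M)$, where the isomorphism amounts to redistributing the density operator of $\varphi$ --- the route taken by Terp for $z=0$. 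The modular identity itself, though the conceptual pivot, is a routine computation once one commits to carrying it out.
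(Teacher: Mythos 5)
The paper itself does not prove Theorem \ref{ThmIsomLpSpaces}: it is imported verbatim from Izumi (Theorem 3.8 of \cite{Izumi}), so your attempt can only be judged on its own terms. Your algebraic pivot is correct: the identity $\varphi^{(z')}_{\sigma_{i(z'-z)}(y)}=\varphi^{(z)}_y$ for $y\in\mathcal{T}_\varphi^2$ does follow from the relation $J\nabla^{\alpha}=\nabla^{-\bar\alpha}J$ by exactly the telescoping computation you describe, the companion covariance $\varphi^{(z)}_{\sigma_u(y)}=\varphi^{(z)}_y\circ\sigma_{-u}$ for real $u$ is likewise correct, and your affine function $W(\zeta)=i(r'-r)\zeta-(s'-s)$ reproduces the exponent in (\ref{EqnLpIsomorphism}) at $\zeta=1/p$. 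You are also right that Remark \ref{RmkCptMor} combined with Theorem \ref{ThmCplxInterpolationBound} cannot suffice here: a single morphism of compatible couples would send a fixed $a$ to a $p$-independent element of the sum space, whereas the right-hand side of (\ref{EqnLpIsomorphism}) genuinely depends on $p$, so a Stein-type argument with a $\zeta$-dependent twist is indeed forced.

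The difficulty is that this Stein-type step---the heart of the theorem---is not carried out, as you yourself concede. The function $g(\zeta)=e^{\varepsilon(\zeta^2-p^{-2})}\sigma_{W(\zeta)}(f(\zeta))$ is not defined: for interior $\zeta$ the value $f(\zeta)$ is merely an element of $M+M_\ast$ inside $L_{(-z)}^\ast$, not a $\sigma$-analytic element, $\sigma_{W(\zeta)}$ is an unbounded, only densely defined map for nonreal $W(\zeta)$, and ``coefficient-wise on a Calder\'on representation'' does not rescue this, since an admissible $f$ need not admit any representation with $\sigma$-analytic values and the analyticity in $\zeta$ of the twisted family would itself have to be established. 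The two escape routes you name (restricting to the interior where values lie in $L^q_{(z)}(M)$, or passing to a concrete Hilsum/Haagerup realization and redistributing the density operator as Terp does for $z=0$) are precisely where the work lies and are left unexecuted; note that twisting only the single analytic element $a$ (i.e.\ taking $g(\zeta)=e^{\varepsilon(\zeta^2-p^{-2})}\sigma_{W(\zeta)}(a)$) yields only $\Vert i^p_{(z')}(\sigma_{W(1/p)}(a))\Vert_{[1/p]}\leq\max\{\Vert a\Vert,\Vert\varphi^{(z)}_a\Vert\}$, an intersection-norm bound, not the claimed isometric inequality against $\Vert i^p_{(z)}(a)\Vert_{[1/p]}$. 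A secondary gap: you use that $i^p_{(z)}(\mathcal{T}_\varphi^2)$ is dense in $L^p_{(z)}(M)$ to extend the map and to run the composition argument for surjectivity; Lemma \ref{LemDenseInt} only gives density of $L_{(z)}$, and the finer density statement (true, but itself a result of Izumi) is nowhere justified. So the proposal isolates the correct mechanism but, as it stands, does not constitute a proof of the theorem.
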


We emphasize, that although the $L^p$-spaces appearing in (\ref{EqnLpIzu}) are isomorphic for different complex interpolation parameters, the intersections defined by this figure may be different. In any case, by \cite[Corollary 2.13]{Izumi},
\begin{equation}\label{EqnIzumiIntersection}
(i_{(-z)}^1)^\ast(L_{(z)} )= (i_{(-z)}^1)^\ast (M) \cap (i_{(-z)}^\infty)^\ast( M_\ast),
\end{equation}
i.e. if one consideres $L_{(z)}$, $M$, $M_\ast$ as subspaces of $L_{(-z)}^\ast$, then  $L_{(z)} = M \cap M_\ast$.

\subsection{Specializations for the complex interpolation parameters} \label{SectHalf} 

In the present paper we will mainly work with the parameter $z = -1/2$. In order to study these spaces also the parameter $z = 1/2$ will play a role. In  this section, we specialize the theory for these parameters. 
 The following proposition shows that $L_{(-1/2)}$ and $L_{(1/2)}$ can be described by a condition that is in generally more easy to check. If $\varphi$ is a state it reduces to Remark \ref{RmkState}.
\begin{prop} \label{PropAltDescr}We have the following alternative descriptions:
\begin{enumerate} 
 \item\label{ItemAltDescr} Let $L=\{ x \in \nphi \mid \exists \: _x \varphi \in M_\ast\: {\rm  s.t. } \: \forall y \in \nphi:  \: _x \varphi (y^\ast) = \varphi(y^\ast x) \}.$ Then, $L =  L_{(-1/2)}$. 
\item\label{ItemAltDescrII}  Let $R = \{ x \in \nphi^\ast \mid \exists \varphi_x \in M_\ast\: {\rm  s.t. }\: \forall y \in \nphi: \: \varphi_x (y) = \varphi(xy) \}.$ Then, $R =  L_{(1/2)}$.
\end{enumerate}
\end{prop}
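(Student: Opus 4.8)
The plan is to prove part \eqref{ItemAltDescr} in detail; part \eqref{ItemAltDescrII} follows by an entirely symmetric argument (replacing $\nabla^{1/2}$ by $\nabla^{-1/2}$, right multiplication by left multiplication, and using $J\nabla^{1/2}\Lambda(a)$ in place of $J\nabla^{-1/2}\Lambda(a)$), so I would only indicate the changes. The first task is to rewrite the defining relation of $L_{(-1/2)}$ in a more tractable form. For $a,b\in\mathcal T_\varphi$ and $x\in M$, using $J\nabla^{1/2}\Lambda(b)=\Lambda(b^\ast)$ (a standard Tomita--Takesaki identity on the Tomita algebra) and $J\nabla^{1/2}\Lambda(a)=\Lambda(a^\ast)$, together with $\nabla^{\bar z}=\nabla^{-1/2}$ when $z=-1/2$, I would compute
\[
\langle x J\nabla^{-1/2}\Lambda(a)\mid J\nabla^{1/2}\Lambda(b)\rangle
=\langle x J\nabla^{1/2}\Lambda(\sigma_i(a))\mid \Lambda(b^\ast)\rangle
=\langle x\Lambda(\sigma_i(a)^\ast)\mid \Lambda(b^\ast)\rangle
=\varphi\big(b\, x\, \sigma_i(a)^\ast\big).
\]
Since $\sigma_i(a)^\ast=\sigma_{-i}(a^\ast)$ and $b\in\mathcal T_\varphi$ makes $\varphi(b\,x\,\sigma_{-i}(a^\ast))=\varphi(a^\ast\,b\,x)$ by the pull-through formula quoted in the Notations, the defining condition $\varphi^{(-1/2)}_x(a^\ast b)=\langle xJ\nabla^{-1/2}\Lambda(a)\mid J\nabla^{1/2}\Lambda(b)\rangle$ becomes $\varphi^{(-1/2)}_x(a^\ast b)=\varphi(a^\ast b x)$ for all $a,b\in\mathcal T_\varphi$.

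Next I would establish the two inclusions. For $L\subseteq L_{(-1/2)}$: given $x\in\nphi$ with a functional ${}_x\varphi\in M_\ast$ satisfying ${}_x\varphi(y^\ast)=\varphi(y^\ast x)$ for all $y\in\nphi$, I take $\varphi^{(-1/2)}_x:={}_x\varphi$; then for $a,b\in\mathcal T_\varphi$ we have $a^\ast b\in\mphi$ and, applying the hypothesis with $y=b^\ast a\in\nphi$ (note $a^\ast b=(b^\ast a)^\ast$), we get ${}_x\varphi(a^\ast b)=\varphi(a^\ast b x)$, which by the rewritten condition shows $x\in L_{(-1/2)}$ with $\varphi^{(-1/2)}_x={}_x\varphi$. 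The reverse inclusion $L_{(-1/2)}\subseteq L$ is where the real content lies. Given $x\in L_{(-1/2)}$, I have a bounded functional $\varphi^{(-1/2)}_x\in M_\ast$ with $\varphi^{(-1/2)}_x(a^\ast b)=\varphi(a^\ast b x)$ for all $a,b\in\mathcal T_\varphi$; I must deduce (i) $x\in\nphi$ and (ii) the identity ${}_x\varphi(y^\ast)=\varphi(y^\ast x)$ extends from $\mathcal T_\varphi$ to all of $\nphi$.

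For (i), the standard trick is to test against an approximation of the form $a_n=\sigma\text{-analytic elements approximating }1$: since $\varphi$ is semifinite there is a net (or, using analytic generators, a sequence) $e_n\in\mathcal T_\varphi$ with $e_n\to 1$ strongly, $\|e_n\|\le 1$, and $\Lambda(e_n)\to$ an appropriate vector; then $\varphi(e_n^\ast e_n x)=\varphi^{(-1/2)}_x(e_n^\ast e_n)$ is bounded by $\|\varphi^{(-1/2)}_x\|$, and a lower-semicontinuity / Cauchy–Schwarz argument on $\varphi((\cdot)^\ast(\cdot))$ forces $x\in\nphi$ with $\|\Lambda(x)\|^2\le\|\varphi^{(-1/2)}_x\|$. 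Concretely I would use that for $a\in\mathcal T_\varphi$, $|\varphi(a^\ast x)|^2=|\varphi^{(-1/2)}_x(a^\ast e)|$-type estimates (taking $b=e_n$) combined with the closability of $\Lambda$ to identify the bounded functional $\Lambda(a)\mapsto\varphi(a^\ast x)=\langle\Lambda(x),\Lambda(a)\rangle$; boundedness of this functional in $\Lambda(a)$ is exactly $x\in\nphi$. Once $x\in\nphi$, both sides of ${}_x\varphi(y^\ast)=\varphi(y^\ast x)$ are $\sigma$-weakly continuous (resp.\ continuous) functions of $y$ on $\nphi$ — the left side since $\varphi^{(-1/2)}_x$ is normal, the right side $y\mapsto\varphi(y^\ast x)=\langle\Lambda(x),\Lambda(y)\rangle$ since $\Lambda$ is $\sigma$-weak–weakly closed — and they agree on the $\sigma$-weakly dense subspace $\mathcal T_\varphi\subseteq\nphi$ (density in the relevant topology is Proposition-type folklore, or follows from $\mathcal T_\varphi$ being a core for $\Lambda$), hence everywhere on $\nphi$. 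This gives $x\in L$ with ${}_x\varphi=\varphi^{(-1/2)}_x$.

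The main obstacle I anticipate is part (i): upgrading the algebraic identity on $\mathcal T_\varphi^2$ to the membership $x\in\nphi$, which requires carefully choosing the approximating net in $\mathcal T_\varphi$ and invoking normality of $\varphi^{(-1/2)}_x$ together with the closedness (normality) of the GNS map $\Lambda$; the continuity/density bookkeeping needed to pass from $\mathcal T_\varphi$ to $\nphi$ is routine once $x\in\nphi$ is known. For part \eqref{ItemAltDescrII}, the same computation with $z=1/2$, $\nabla^{\bar z}=\nabla^{1/2}$, gives $\varphi^{(1/2)}_x(a^\ast b)=\varphi(x a^\ast b)$, and one argues symmetrically with $\nphi^\ast$ and the pull-through on the other side.
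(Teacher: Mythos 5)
Your forward inclusion $L \subseteq L_{(-1/2)}$ is essentially the paper's own computation and is fine, modulo one cosmetic point: your ``rewritten condition'' $\varphi^{(-1/2)}_x(a^\ast b)=\varphi(a^\ast b x)$ is not literally meaningful for arbitrary $x\in M$, since $a^\ast b x$ need not lie in $\mphi$; the well-defined form is $\varphi^{(-1/2)}_x(a^\ast b)=\varphi(b\,x\,\sigma_{-i}(a^\ast))$, and that is what should be carried through the argument. Your step (ii) is also repairable but imprecise as stated: $y\mapsto\langle\Lambda(x),\Lambda(y)\rangle$ is \emph{not} $\sigma$-weakly continuous on $\nphi$, so $\sigma$-weak density of $\mathcal{T}_\varphi$ does not suffice; what is needed is that $\mathcal{T}_\varphi^2$ is a $\sigma$-weak/weak core for $\Lambda$ (Lemma \ref{LemCoreT}), which you mention only parenthetically.

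The genuine gap is in your step (i), the inclusion $L_{(-1/2)}\subseteq\nphi$, where the mechanism you propose does not work. The quantity you control, $\varphi^{(-1/2)}_x(e_n^\ast e_n)\le\Vert\varphi^{(-1/2)}_x\Vert$, is \emph{linear} in $x$; membership in $\nphi$ requires control of the quadratic quantity $\varphi(e_j^\ast x^\ast x e_j)=\Vert\Lambda(xe_j)\Vert^2$ (note $xe_j\in\nphi$ automatically), and no lower-semicontinuity argument turns the former into the latter. Your fallback, bounding the functional $\Lambda(a)\mapsto\varphi(a^\ast x)$ ``by Cauchy--Schwarz'', is circular: the estimate $\vert\langle\Lambda(x),\Lambda(a)\rangle\vert\le\Vert\Lambda(x)\Vert\,\Vert\Lambda(a)\Vert$ presupposes $x\in\nphi$, and the only a priori bound on expressions like $\varphi^{(-1/2)}_x(a^\ast e_n)$ is in terms of the operator norm $\Vert a\Vert$, not of $\Vert\Lambda(a)\Vert$. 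The paper closes exactly this hole with two ingredients absent from your sketch: first, it shows $M\mathcal{T}_\varphi^2\subseteq L_{(-1/2)}$ with the explicit formula $\varphi^{(-1/2)}_{xcd}(y)=\varphi(\sigma_i(d)\,y\,xc)$, so that for a Terp-type net $(e_j)$ in $\mathcal{T}_\varphi$ with both $(e_j)$ and $(\sigma_i(e_j))$ bounded one can write $\Vert\Lambda(xe_j)\Vert^2=\varphi(e_j^\ast x^\ast x e_j)=\varphi^{(-1/2)}_{xe_j\sigma_{-i}(e_j^\ast)}(x^\ast)$; second, it invokes Izumi's Proposition 2.6, $\varphi^{(-1/2)}_{xe_j\sigma_{-i}(e_j^\ast)}=\varphi^{(-1/2)}_x\,\sigma_i(e_j)e_j^\ast$, to obtain a bound on $\Vert\Lambda(xe_j)\Vert^2$ that is uniform in $j$. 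Only with this boundedness in hand, together with $\langle\Lambda(xe_j),\Lambda(ab)\rangle\to\varphi(a^\ast x\sigma_{-i}(b^\ast))$ on the core, can the $\sigma$-weak/weak closedness of $\Lambda$ be applied to $xe_j\to x$ to conclude $x\in\nphi$. Without an argument of this type the crucial inclusion, which you yourself identify as the main obstacle, is not proved.
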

\begin{proof}
We only give the proof of (\ref{ItemAltDescr}), since (\ref{ItemAltDescrII}) can be proved similarly. We first prove $\subseteq$. For $ x\in L$, $a, b \in \mathcal{T}_\varphi$,
\[
 \begin{split}
	&  _x \varphi (a^\ast b) = \varphi(a^\ast bx) = \varphi( bx \sigma_{-i}(a^\ast)) = \langle x \Lambda(\sigma_{-i}(a^\ast)), \Lambda(b^\ast) \rangle \\ = & \langle x \nabla J \nabla^{1/2} \Lambda(a), J \nabla^{1/2}\Lambda(b) \rangle =
   \langle x J \nabla^{-1/2}  \Lambda(a),  J \nabla^{1/2} \Lambda(b) \rangle.
 \end{split}
\]
Hence $x \in L_{(-1/2)}$ and $_x \varphi = \varphi^{(-1/2)}_x$.


To prove $\supseteq$, we first prove that $M \mathcal{T}_\varphi^2 \subseteq L_{(-1/2)}$. Indeed, let $x \in M$ and let $c,d \in \mathcal{T}_\varphi$. The functional $M \ni y \mapsto \varphi(\sigma_i(d) yx c)$ is normal. Furthermore, for $a,b \in \mathcal{T}_\varphi$,
\[
\langle xcd J \nabla^{-1/2} \Lambda(a), J \nabla^{1/2} \Lambda(b) \rangle = 
\langle \Lambda(xcd\sigma_{-i}(a^\ast)), \Lambda(b^\ast) \rangle =
\varphi(bxcd \sigma_{-i}(a^\ast)) = \varphi(\sigma_i(d) a^\ast bxc ).
\]
Hence, $xcd \in L_{(-1/2)}$. 

Next, we prove that $L_{(-1/2)} \subseteq \nphi$. Take $x \in L_{(-1/2)}$ and let $(e_j)_{j \in J}$ be a bounded net in $\mathcal{T}_\varphi$ such that $\sigma_i(e_j)$ is bounded and such that $e_j \rightarrow 1$ $\sigma$-weakly, see \cite[Lemma 9]{TerpII}. Then, $xe_j \rightarrow x$ $\sigma$-weakly. Furthermore,
\begin{equation}\label{EqnNormEstimate}
\Vert \Lambda(xe_j) \Vert^2 = \varphi(e_j^\ast x^\ast x e_j) = \varphi^{(-1/2)}_{x e_j \sigma_{-i}(e_j^\ast)}(x^\ast) \leq \Vert \varphi^{(-1/2)}_{x e_j \sigma_{-i}(e_j^\ast)} \Vert \Vert x \Vert,
\end{equation}
where the second equality is due to the previous paragraph. By \cite[Proposition 2.6]{Izumi}, 
\begin{equation}\label{EqnNormEstimateII}
\varphi^{(-1/2)}_{x e_j \sigma_{-i}(e_j^\ast)} = \varphi^{(-1/2)}_x   \sigma_i(e_j) e_j^\ast,
\end{equation}
 where for $\omega \in M_\ast, y \in M$, $\omega  y$ is the normal functional defined by $(\omega   y)(a) = \omega(ya), a \in M$. From (\ref{EqnNormEstimate}) and (\ref{EqnNormEstimateII}) it follows that $(\Lambda(x e_j))_{j\in J}$ is a bounded net. Furthermore, for $a, b \in \mathcal{T}_\varphi$,
\[
\langle \Lambda(xe_j), \Lambda(ab) \rangle = \varphi(b^\ast a^\ast x e_j  ) =
\varphi( a^\ast x e_j \sigma_{-i}(b^\ast) ) \rightarrow \varphi(a^\ast x \sigma_{-i}(b^\ast)).
\]
Since $(\Lambda(xe_j))_{j\in J}$ is bounded, this proves that $(\Lambda(xe_j))_{j\in J}$ is weakly convergent. Since $\Lambda$ is $\sigma$-weak/weak closed, this implies that $x \in \Dom(\Lambda) = \nphi$. So $L_{(-1/2)} \subseteq \nphi$.

To finish the proof, let again $x \in L_{(-1/2)}$ and let $a, b \in \mathcal{T}_\varphi$. We prove that $\varphi_x^{(-1/2)}((ab)^\ast) = \langle \Lambda(x), \Lambda(ab) \rangle$. The proposition then follows since by Lemma \ref{LemCoreT}, $\mathcal{T}_\varphi^2$ is a $\sigma$-weak/weak-core for $\Lambda$.  
The proposition follows from:
\[
 \langle \Lambda(x), \Lambda(ab) \rangle = \varphi(b^\ast a^\ast x) = \varphi(a^\ast x \sigma_{-i}(b^\ast)) = 
 \langle x J \nabla^{-1/2} \Lambda(b), J \nabla^{1/2} \Lambda(a^\ast)  \rangle  = 
  \varphi_x^{(-1/2)}(b^\ast a^\ast). 
\]
\end{proof}

In particular, it follows from Proposition \ref{PropAltDescr} that for $y \in \nphi$,
\begin{equation}
\begin{array}{rll}
_x \varphi (y^\ast) = & \varphi(y^\ast x),\qquad &   x \in L, \\
\varphi_x(y) = & \varphi(xy), & x \in R. 
\end{array}\label{EqnVarphiLeftRight}
\end{equation}
We emphasize that one has to be careful that (\ref{EqnVarphiLeftRight}) does not make sense for every $x,y \in M$. Also, (\ref{EqnVarphiLeftRight})justifies why (\ref{EqnLpIzu}) is also called the left injection for $z = -1/2$ and the  right injection for $z = 1/2$.

 Part of the next Corollary is already proved in \cite{Izumi}. Using the alternative descriptions of Proposition \ref{PropAltDescr}, it is easy to prove the remaining statements.

\begin{cor}\label{CorInclusions}
  We have inclusions $M \mathcal{T}_\varphi^2 \subseteq L$, $\mathcal{T}_\varphi^2 M \subseteq R$, $\mathcal{T}_\varphi^2 \subseteq L \cap R$, $L \mathcal{T}_\varphi \subseteq L$, $\mathcal{T}_\varphi R \subseteq R$, $M L \subseteq L$ and $R M \subseteq R$. Moreover, $R = \{ x^\ast \mid x  \in L \}$ and for $x \in L$, $\varphi_{x^\ast} = \overline{_x \varphi}$. 
\end{cor}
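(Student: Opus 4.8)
The plan is to deduce the whole statement from the alternative descriptions in Proposition~\ref{PropAltDescr} together with the $\ast$-symmetry relating $L=L_{(-1/2)}$ and $R=L_{(1/2)}$, which lets one read off each ``right'' assertion from a ``left'' one by taking adjoints.

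First I would treat the final (``Moreover'') part. Let $x\in L$. By Proposition~\ref{PropAltDescr}, $x\in\nphi$, hence $x^\ast\in\nphi^\ast$, and for every $y\in\nphi$ both $y^\ast x$ and $x^\ast y$ lie in $\mphi=\nphi^\ast\nphi$, where $\varphi$ is defined and satisfies $\varphi(z^\ast)=\overline{\varphi(z)}$. Hence $\overline{{}_x\varphi}(y)=\overline{{}_x\varphi(y^\ast)}=\overline{\varphi(y^\ast x)}=\varphi((y^\ast x)^\ast)=\varphi(x^\ast y)$, and since $\overline{{}_x\varphi}\in M_\ast$ this shows $x^\ast\in R$ with $\varphi_{x^\ast}=\overline{{}_x\varphi}$. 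The same computation with the roles of $L$ and $R$ exchanged shows $w^\ast\in L$ for every $w\in R$; hence $R=\{x^\ast\mid x\in L\}$ and the displayed formula follows. Since $\mathcal{T}_\varphi=\mathcal{T}_\varphi^\ast$ and $M^\ast=M$, taking adjoints then reduces $\mathcal{T}_\varphi^2 M=(M\mathcal{T}_\varphi^2)^\ast$, $\mathcal{T}_\varphi R=(L\mathcal{T}_\varphi)^\ast$, $RM=(ML)^\ast$ and $\mathcal{T}_\varphi^2\subseteq R$ to the corresponding assertions on the $L$-side, and $\mathcal{T}_\varphi^2\subseteq L\cap R$ follows by combining $\mathcal{T}_\varphi^2\subseteq L$ with its adjoint.

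It thus remains to prove $M\mathcal{T}_\varphi^2\subseteq L$, $\mathcal{T}_\varphi^2\subseteq L$, $L\mathcal{T}_\varphi\subseteq L$ and $ML\subseteq L$. The inclusion $M\mathcal{T}_\varphi^2\subseteq L$ is already established inside the proof of Proposition~\ref{PropAltDescr}, and $\mathcal{T}_\varphi^2\subseteq M\mathcal{T}_\varphi^2\subseteq L$ because $1\in M$ (this last inclusion is also \cite[Proposition~2.3]{Izumi}); likewise $L\mathcal{T}_\varphi\subseteq L$ is a reformulation of \cite[Proposition~2.6]{Izumi}, the relevant case being essentially (\ref{EqnNormEstimateII}): for $x\in L$, $a\in\mathcal{T}_\varphi$ one has $xa\in\nphi$ because $x^\ast x\in\mphi$ and therefore $a^\ast(x^\ast x)a\in\mphi$ by analyticity of $a$ and $a^\ast$, while $\varphi^{(-1/2)}_{xa}=\varphi^{(-1/2)}_x\,\sigma_i(a)$ is the sought normal functional, as follows from $\varphi(y^\ast xa)=\varphi(\sigma_i(a)y^\ast x)={}_x\varphi(\sigma_i(a)y^\ast)$ using $y\sigma_{-i}(a^\ast)\in\nphi$ and the identity $\varphi(uv)=\varphi(v\sigma_{-i}(u))$ for $v\in\mphi$, $u$ analytic. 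Finally, for $ML\subseteq L$: given $x\in L$, $m\in M$, $mx\in\nphi$ since $\nphi$ is a left ideal, and for $y\in\nphi$ one has $\varphi(y^\ast mx)={}_x\varphi((m^\ast y)^\ast)$, so the normal functional $a\mapsto{}_x\varphi(am)$ (that is, $m\cdot{}_x\varphi$) witnesses $mx\in L$.

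If one does not simply invoke \cite[Proposition~2.6]{Izumi}, the only genuine subtlety is the ideal bookkeeping behind $L\mathcal{T}_\varphi\subseteq L$: since $\nphi$ is merely a left ideal, $xa$ need not be in $\nphi$ for general $a\in M$, so one really needs $a$ and $a^\ast$ bounded and analytic in order to keep $x^\ast x$ --- and then all intermediate products --- inside $\mphi$, where $\varphi$ is defined. This is precisely why on the right one obtains only $L\mathcal{T}_\varphi\subseteq L$ rather than $LM\subseteq L$, whereas on the left $ML\subseteq L$ is free from the left-ideal property of $\nphi$. Everything else is formal manipulation with adjoints and the descriptions of Proposition~\ref{PropAltDescr}.
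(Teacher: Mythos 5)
Your proposal is correct and follows essentially the same route as the paper: both rest on the alternative descriptions of Proposition~\ref{PropAltDescr} together with routine manipulations of analytic elements via the identity $\varphi(ax)=\varphi(x\sigma_{-i}(a))$ and the left-ideal property of $\nphi$. The only organizational difference is that you establish $R=\{x^\ast\mid x\in L\}$ and $\varphi_{x^\ast}=\overline{{}_x\varphi}$ first and then obtain the right-hand inclusions by taking adjoints, whereas the paper verifies each inclusion directly by exhibiting the corresponding normal functional (e.g.\ ${}_{y_l a}\varphi={}_{y_l}\varphi(\sigma_i(a)\,\cdot\,)$, ${}_{xy_l}\varphi={}_{y_l}\varphi(\,\cdot\,x)$), formulas which it records because they are reused later; your argument yields the same formulas on the left side, so nothing is lost.
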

\begin{proof} 
The first inclusion has already been proved in the proof of Proposition \ref{PropAltDescr}. Here we have proved that for $x \in M, a,b \in \mathcal{T}_\varphi$, $_{xab} \varphi (z) = \varphi(\sigma_{i} (b) \: z \: xa  ), z \in M$. Similarly, one can prove that for $x, z \in M, a, b \in \mathcal{T}_\varphi, y_l \in L , y_r \in R$,
\[
\begin{array}{lll}
\varphi _{ab x}(z) = \varphi(bx \: z \: \sigma_{-i}(a));&
\varphi _{ab}(z) = \varphi(b \: z \: \sigma_{-i}(a));&
_{ab} \varphi (z) = \varphi( \sigma_{i}(b) \: z \: a); \\
_{y_l a} \varphi (z) =  _{y_l} \varphi(\sigma_{i}(a) z); &
\varphi _{ a y_r}(z) = \varphi^{(1/2)}_{y_r}( z \sigma_{-i}(a));&
_{x y_l} \varphi (z) = _{y_l} \varphi ( z x); \\
\varphi _{ y_r x}(z) = \varphi _{y_r}( x z );&
\varphi_{x^\ast}  = \overline{_{x} \varphi } .&
\end{array}
\]
\end{proof}

Since we are mainly dealing with complex interpolation parameter $z = -1/2$ and $z = 1/2$, it is more convenient to adapt our notation.
\begin{nt}\label{NotMMast}
We use the following short hand notations. For $p \in [1, \infty]$,
\[
\begin{array}{llll}
 L^p(M)_{{\rm left}} = L^p_{(-1/2)}(M),& L = L_{(-1/2)}, & l^p = i_{(-1/2)}^p,& _x \varphi  = \varphi_x^{(-1/2)} \: {\rm for } \: x \in L, \\
 L^p(M)_{{\rm right}} = L^p_{( 1/2)}(M),& R = L_{( 1/2)}, & r^p = i_{( 1/2)}^p, &   \varphi_x  = \varphi_x^{(1/2)} \: {\rm for } \: x \in R.
\end{array}
\]
Recall that by definition $M_\ast = L^1(M)_{{\rm left}}$ and   $M = L^\infty(M)_{{\rm left}}$. From now on, we consider $M_\ast$ and $M$ as subspaces of $R^\ast$ by means of the respective maps $r_\infty^\ast$ and $r_1^\ast$ and it is convenient to omit these maps in the notation. So the identifications of $M_\ast$ and $M$ in $R^\ast$ are given by the pairings:
\begin{eqnarray}
\langle \omega, y \rangle_{R^\ast, R} =& \omega(y),& \qquad \omega \in M_\ast, y \in R, \label{EqnPairingMast} \\
 \langle x, y \rangle_{R^\ast, R} =& \varphi_y(x),  & \qquad x \in M, y \in R. \label{EqnPairingM}
\end{eqnarray}
The norm on $L$ will be denoted by $\Vert \cdot \Vert_L$.
\end{nt}

\subsection{Comparison with  Hilsum's $L^p$-spaces}\label{SectGNS}

  Here, we recall the definition of non-commuta-tive $L^p$-spaces given in \cite{Hilsum}, see also \cite{TerpII}. We need these spaces for two reasons. 

First of all, many of the objects we introduce are constructed by means of Theorem \ref{ThmCplxInterpolationBound}. For that reason the structures are abstract in nature. The advantage of the  Hilsum approach is that it is much more concrete. Hence, also the objects defined in Section \ref{SectFT} have a more concrete meaning when  they are considered in the  Hilsum setting.  

Secondly, a non-commutative $L^2$-space associated with a von Neumann algebra $M$ with weight $\varphi$ can be identified with the GNS-space $\cH$ of the weight. In \cite[Theorem 23]{TerpII} this identification is given for Hilsum's definition. In \cite{Izumi}, Izumi does not explicitly keep track of an isomorphism between $L^2(M)_{{\rm left}}$ with $\cH$. Here we make this isomorphism explicit. This is useful for the $L^p$-Fourier transform. In particular, Corollary \ref{CorInverse} relies heavily on this identification. 

\vspace{0.3cm}

We refer to the original paper \cite{Hilsum} for  Hilsum's $L^p$-spaces. The following is also nicely summarized in \cite[Sections III and IV]{TerpI}.
Fix a normal, semi-finite, faithful weight $\phi$ on the commutant $M'$. Let $\sigma^\phi$ be its modular automorphism group.

\begin{dfn}
 A closed, densely defined operator $x$ on $\cH$ is called $\gamma$-homogeneous, with $\gamma \in \mathbb{R}$ if the following skew commutation relation holds
\[
xa \subseteq a \sigma_{i \gamma}^\phi (x), \qquad \textrm{ for all } \: a \in M' \textrm{ analytic w.r.t. } \sigma^\phi. 
\]
\end{dfn}

The following theorem requires the spatial derivate \cite{Con}, \cite{TakII}. We will not recall this construction, but rather cite its properties. For a good introduction, we refer to \cite[Section III]{TerpII}. The spatial derivative construction gives a passage between $M_\ast$ and the $(-1)$-homogeneous operators. The following theorem can be found under the given references in \cite{TerpI}. It can be derived from \cite[Theorem 13]{Con}.

\begin{thm}[Theorem 29, Definition 33 and Corollary 34 of \cite{TerpI}] \label{ThmSpatDer} 
Let $x$ be a closed, densely defined operator with polar decomposition $x = u \vert x \vert$. Let $p \in [1, \infty]$. The following are equivalent:
\begin{enumerate}
\item $x$ is $(-1/p)$-homogeneous;
\item $u \in M$ and $\vert x \vert^p$ is $(-1)$-homogeneous;
\item $u \in M$ and there is a normal, semi-finite weight $\psi$ on $M$ such that $\vert x \vert^p$ equals the spatial derivative $d\psi/d\phi$. 
\end{enumerate}
\end{thm}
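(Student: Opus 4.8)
The plan is to read this statement as a recollection of Connes' spatial‑derivative theory together with a short calculus of homogeneous operators, following \cite[Theorem 29, Definition 33 and Corollary 34]{TerpI} and \cite[Theorem 13]{Con}.

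First I would dispose of \emph{(2)}$\Leftrightarrow$\emph{(3)}. This is exactly Connes' characterisation of the spatial derivative \cite{Con} (see also \cite[Section III]{TerpI}): the assignment $\psi \mapsto d\psi/d\phi$ is a bijection from the normal, semi-finite weights on $M$ onto the positive self-adjoint operators on $\cH$ that are $(-1)$-homogeneous, and $d\psi/d\phi$ is itself always $(-1)$-homogeneous. Since $\vert x\vert^p$ is positive and self-adjoint, under the standing hypothesis $u \in M$ both \emph{(2)} and \emph{(3)} assert precisely that $\vert x\vert^p$ lies in the range of this bijection, so nothing is needed beyond quoting \cite{Con} (allowing $d\psi/d\phi$ to be singular, which corresponds to a non-faithful semi-finite $\psi$, hence the word \emph{semi-finite} rather than \emph{faithful}).

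The substance is \emph{(1)}$\Leftrightarrow$\emph{(2)}, which I would extract from three stability properties of ``$\gamma$-homogeneous'', using only the skew commutation relation $xa \subseteq a\,\sigma^\phi_{i\gamma}(x)$ for $\sigma^\phi$-analytic $a \in M'$:
\begin{enumerate}
\item[(a)] if $x$ is $\gamma$-homogeneous so is $x^\ast$ (take adjoints in the defining relation, use $\sigma^\phi_{i\gamma}(a)^\ast = \sigma^\phi_{-i\gamma}(a^\ast)$ for real $\gamma$ and re-index over the analytic elements);
\item[(b)] if $x$ is $\gamma$-homogeneous and $y$ is $\delta$-homogeneous then $yx$ is $(\gamma+\delta)$-homogeneous (compose the two skew relations), and taking the inverse on a support projection changes the degree of homogeneity to its negative;
\item[(c)] if $h \geq 0$ is self-adjoint and $\alpha$-homogeneous, then $h^s$ is $s\alpha$-homogeneous for every $s \in \mathbb{R}$; this is the functional-calculus step, proved by approximating $h^s$ by bounded Borel functions of $h$ and checking that $\alpha$-homogeneity persists (equivalently, by rewriting homogeneity through the group $t\mapsto \Delta_\phi^{it} h \Delta_\phi^{-it}$).
\end{enumerate}
Granting (a)--(c): for \emph{(1)}$\Rightarrow$\emph{(2)}, if $x$ is $(-1/p)$-homogeneous then $x^\ast x = \vert x\vert^2$ is $(-2/p)$-homogeneous by (a),(b), hence $\vert x\vert^p = (\vert x\vert^2)^{p/2}$ is $(-1)$-homogeneous by (c); moreover $\vert x\vert^{-1}$ on the support projection (which lies in $M$, since the support projection of a $(-1)$-homogeneous positive self-adjoint operator belongs to $M$) is $(1/p)$-homogeneous, so $u = x\vert x\vert^{-1}$ is $0$-homogeneous by (b), and a $0$-homogeneous \emph{bounded} operator commutes with every $\sigma^\phi$-analytic element of $M'$, hence with all of $M'$ by $\sigma$-weak density, so $u \in M'' = M$. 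Conversely for \emph{(2)}$\Rightarrow$\emph{(1)}: $\vert x\vert = (\vert x\vert^p)^{1/p}$ is $(-1/p)$-homogeneous by (c), $u \in M$ is $0$-homogeneous since $\sigma^\phi_0 = \iota$, and $x = u\vert x\vert$ is $(-1/p)$-homogeneous by (b). When $p = \infty$ one has $(-1/p) = 0$, and the statement degenerates to the description of the polar decomposition of an operator affiliated with $M$.

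The main obstacle I expect is the bookkeeping with \emph{unbounded} operators in (b) and (c): products and powers of unbounded homogeneous operators require controlling domains and cores, and the symbol $\sigma^\phi_{i\gamma}(x)$ for non-affiliated $x$ must first be pinned down — it is the unique $\gamma$-homogeneous operator furnished by the skew relation — before one can verify it behaves functorially under (a)--(c). Once this calculus is set up the equivalences are formal; as the bracketed attribution signals, all of it is in \cite[Theorem 29, Definition 33 and Corollary 34]{TerpI} resting on \cite[Theorem 13]{Con}, and in the write-up I would cite these while including the short derivations (a)--(c).
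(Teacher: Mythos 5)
The paper offers no proof of this statement: it is quoted as an external result, attributed to \cite[Theorem 29, Definition 33 and Corollary 34]{TerpI} with the remark that it can be derived from \cite[Theorem 13]{Con}, which is precisely the attribution your proposal rests on. Your sketch of how the equivalences are obtained there — Connes' bijection between normal semi-finite weights and positive self-adjoint $(-1)$-homogeneous operators for $(2)\Leftrightarrow(3)$, and the adjoint/product/power calculus of homogeneous operators for $(1)\Leftrightarrow(2)$ (where in step (c) the honest route is the unitary-group reformulation you mention rather than literal approximation by bounded Borel functions, since those approximants are not themselves homogeneous) — is consistent with the cited sources, so there is nothing further in the paper to compare it against.
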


\begin{dfn}
Let $p \in [1,\infty)$. The  Hilsum $L^p$-space $L^p(\phi)$ is defined as the space of closed, densely defined operators $x$ on the GNS-space $\cH$ of $\varphi$ such that if $x = u \vert x \vert$ is the polar decomposition, then $\vert x \vert^p$ is the spatial derivative of a positive $\omega \in M_\ast$ and $u \in M$. It carries the norm $ \Vert x \Vert_p = (\omega(1))^{1/p}$. We set $L^\infty(M) = M$.
\end{dfn}

In particular, every operator in $L^p(\phi)$ is closed, densely defined and $(-1/p)$-homogeneous. This includes $p = \infty$. By Theorem \ref{ThmSpatDer}, the spatial derivative gives an isometric isomorphism between $M_\ast$ and $L^1(\phi)$.

We introduce notation for the distinguished spatial derivative
\[
d = d\varphi/d\phi.
\]
  $d$ is a strictly positive, self-adjoint operator acting on the GNS-space $\cH$. 
 We need the fact that it implements the modular automorphism group of $\varphi$ and $\phi$, i.e.
\[
\sigma_t(x) = d^{it} x d^{-it}, \:\: x \in M \qquad \sigma^\phi_t(x) = d^{-it} y d^{it}, \:\: y\in M' .
\]
Using this, one can prove that $d$ is $(-1)$-homogeneous, see \cite[Lemma 22]{TerpII}. The operator $d$ forms a handy tool to find elements of $L^p(\phi)$.

\begin{lem}[Theorem 26 of \cite{TerpII}]\label{LemInc}
Let $p \in [2, \infty]$ and let $x \in \nphi$. Then, $x d^{1/p}$ is preclosed and its closure $[ x d^{1/p}]$ is in $L^p(\phi)$. Moreover, there is an isometric isomorphism $\mathcal{P}: \cH \rightarrow L^2(\phi)$ given by $[xd^{1/2}] \mapsto \Lambda(x)$.
\end{lem}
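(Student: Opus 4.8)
The plan is to dispose of the endpoint $p=\infty$ at once — there $d^{1/p}=d^{0}=1$, so $[xd^{1/p}]=x\in M=L^\infty(\phi)$ and there is nothing to prove — and to concentrate on $p\in[2,\infty)$. For such $p$ I must: show $xd^{1/p}$ is preclosed; verify that its closure satisfies the homogeneity and integrability conditions defining $L^p(\phi)$; and, for $p=2$, read off the isometry $\mathcal{P}$ from the norm computation.

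For preclosedness and homogeneity I would first reduce to $x$ analytic with respect to $\sigma$ by smoothing, replacing $x$ by $x_n=\tfrac{n}{\sqrt{\pi}}\int_{-\infty}^{\infty}e^{-n^2t^2}\sigma_t(x)\,dt\in\nphi$, which is analytic, satisfies $\Vert x_n\Vert\le\Vert x\Vert$ and $\Lambda(x_n)\to\Lambda(x)$. For analytic $x$ the relation $d^{it}x^\ast d^{-it}=\sigma_t(x^\ast)$ continues analytically to $d^{1/p}x^\ast d^{-1/p}=\sigma_{-i/p}(x^\ast)$, a bounded operator; hence $x^\ast$ maps $\Dom(d^{1/p})$ into itself, $\Dom((xd^{1/p})^\ast)\supseteq\Dom(d^{1/p})$ is dense, and $xd^{1/p}$ is preclosed. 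Applying the same continuation to $a\in M'$ analytic with respect to $\sigma^\phi$, together with the $(-1)$-homogeneity of $d$, yields the skew commutation relation $[xd^{1/p}]a\subseteq a\,\sigma^\phi_{-i/p}([xd^{1/p}])$, so $[xd^{1/p}]$ is $(-1/p)$-homogeneous with unitary part in $M$; by Theorem \ref{ThmSpatDer}, $\vert[xd^{1/p}]\vert^p=d\psi/d\phi$ for a normal, semi-finite weight $\psi$ on $M$. For general $x\in\nphi$ I then pass to the limit: by the norm identity below (for $p=2$), $\Vert[x_nd^{1/p}]-[x_md^{1/p}]\Vert$ is controlled, so $([x_nd^{1/p}])_n$ converges in $L^p(\phi)$ and the limit is the closure of $xd^{1/p}$.

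The case $p=2$ is the crux. Here $\vert[xd^{1/2}]\vert^2=\overline{d^{1/2}x^\ast x d^{1/2}}$, and one has to show that the weight $\psi$ with $\vert[xd^{1/2}]\vert^2=d\psi/d\phi$ is the bounded functional $y\mapsto\varphi(\vert x\vert y\vert x\vert)=\langle y\Lambda(\vert x\vert),\Lambda(\vert x\vert)\rangle$, of total mass $\psi(1)=\varphi(x^\ast x)=\Vert\Lambda(x)\Vert^2$; in particular $[xd^{1/2}]\in L^2(\phi)$ with $\Vert[xd^{1/2}]\Vert_2=\Vert\Lambda(x)\Vert$. Proving this identity means unwinding the definition of the spatial derivative — using that $d$ implements both $\sigma$ and $\sigma^\phi$, and the characterisation of $d\psi/d\phi$ through its quadratic form on $\phi$-bounded vectors — and this is where the real work lies. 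For $2<p<\infty$ finiteness of $\psi$ then comes for free: since $x\in\nphi\subseteq M=L^\infty(\phi)$ and $[xd^{1/2}]\in L^2(\phi)$, a H\"older-type interpolation inequality for the Hilsum spaces at $\tfrac1p=\tfrac{1-\theta}2$, $\theta=1-\tfrac2p$, gives $\Vert[xd^{1/p}]\Vert_p\le\Vert[xd^{1/2}]\Vert_2^{2/p}\,\Vert x\Vert^{1-2/p}<\infty$, so $\psi(1)=\Vert[xd^{1/p}]\Vert_p^p<\infty$ and $[xd^{1/p}]\in L^p(\phi)$.

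It remains to produce $\mathcal{P}$. Since $\varphi$ is faithful, $\Lambda$ is injective on $\nphi$, so $\Lambda(x)\mapsto[xd^{1/2}]$ is a well-defined linear map on the dense subspace $\Lambda(\nphi)\subseteq\cH$; it is isometric by the identity just obtained, hence extends to an isometry $\mathcal{P}\colon\cH\to L^2(\phi)$. To see $\mathcal{P}$ is surjective, observe that $\overline{\{[xd^{1/2}]\mid x\in\nphi\}}$ is stable under the left action $b\cdot[xd^{1/2}]=[bxd^{1/2}]$ (as $\nphi$ is a left ideal) and under the right action by $\sigma$-analytic $b$, namely $[xd^{1/2}]\cdot b=[x\,\sigma_{-i/2}(b)\,d^{1/2}]$ with $x\,\sigma_{-i/2}(b)\in\nphi$; so it is stable under both $M$-actions making $L^2(\phi)$ a standard form, and the corresponding projection lies in $Z(M)$. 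Since it fixes $[xd^{1/2}]$ for every $x\in\nphi$ and $\nphi$ is $\sigma$-weakly dense in $M$, this projection is $1$ and $\mathcal{P}$ is onto. The hardest point of the whole argument is the spatial-derivative identity for $p=2$; everything else is an analytic-continuation computation with the commutation relations, or a routine interpolation estimate.
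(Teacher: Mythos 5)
You should first be aware that the paper contains no proof of this lemma: it is imported wholesale from Terp, namely \cite{TerpII}, Theorem 26 for the statements about $[xd^{1/p}]$ and Theorem 23 for the isometric isomorphism $\mathcal{P}$ (the paper's square-bracketed attribution is the whole ``proof''). So your proposal is in effect an attempt to reprove Terp's theorems, and judged on those terms it has genuine gaps at exactly the two points that carry all the weight. The $p=2$ crux --- that $\vert[xd^{1/2}]\vert^2$ is the spatial derivative of the normal functional $y\mapsto\varphi(\vert x\vert y\vert x\vert)$, whence $\Vert[xd^{1/2}]\Vert_2=\Vert\Lambda(x)\Vert$ --- is not proved but explicitly deferred (``this is where the real work lies''). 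Since your Cauchy-sequence passage from analytic $x$ to general $x\in\nphi$, the isometry of $\mathcal{P}$, and your bound for $p>2$ all rest on this identity, deferring it means the proof is not there; this identity is the substance of \cite{TerpII}, Theorem 23.

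The second gap is worse because the proposed route would fail: the ``H\"older-type interpolation inequality'' $\Vert[xd^{1/p}]\Vert_p\le\Vert[xd^{1/2}]\Vert_2^{2/p}\Vert x\Vert^{1-2/p}$ is not an instance of H\"older in Hilsum's spaces. H\"older bounds $\Vert[ab]\Vert_r$ by $\Vert a\Vert_s\Vert b\Vert_t$ with $1/r=1/s+1/t$, and any factorization of $[xd^{1/p}]$ through $[xd^{1/2}]$ would need the factor $d^{1/p-1/2}$ with a negative exponent, which lies in no $L^t(\phi)$ (nor is $d^{1/p}$ itself in $L^p(\phi)$ unless $\varphi$ is finite); it is also not a log-convexity statement for a fixed operator, since the operator $[xd^{1/p}]$ changes with $p$. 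This inequality is precisely the content of \cite{TerpII}, Theorem 26, and needs a genuine three-lines/duality argument against $L^{p'}(\phi)$. Two smaller points: passing from $L^p$-convergence of $[x_nd^{1/p}]$ to the assertion that the limit \emph{is} the closure of $xd^{1/p}$ requires an argument (graph convergence or pairing against a suitable dense set), and your surjectivity argument for $\mathcal{P}$ presupposes that $L^2(\phi)$ with the left action $b\cdot[xd^{1/2}]=[bxd^{1/2}]$ is a standard form whose commutant is the stated right action --- facts of essentially the same depth as what is being proved. In short: either cite Terp, as the paper does, or supply the $p=2$ spatial-derivative computation and a genuine interpolation argument for $p\in(2,\infty)$.
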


We use this result to prove the following.

\begin{prop}\label{PropL2Identifications}
Let $p \in [1, \infty]$. 
\begin{enumerate}
\item\label{ItemComparisonI} Let $a,b \in \mathcal{T}_\varphi$. Then, $ab d^{1/p}$ is preclosed and its closure $[ ab d^{1/p}]$ is in $L^p(\phi)$.
\item\label{ItemComparisonII} There is an isometric isomorphism $\Phi_p: L^p(\phi) \rightarrow L^p(M)_{{\rm left}}$ such that 
\[
\Phi_p: [ab d^{1/p}] \mapsto l^p(ab), \qquad  a,b \in \mathcal{T}_\varphi.
\] 
\item\label{ItemComparisonIII}  There is a unitary map $U_l: L^2(M)_{{\rm left}} \rightarrow \cH$ determined by 
\[
U_l: l^2(a) \mapsto \Lambda(a), \qquad a \in \mathcal{T}_\varphi^2.
\] 
\item\label{ItemComparisonIV} More general, there is a unitary map $U_{(z)}: L^2_{(z)}(M) \rightarrow \cH$ determined by 
\[
U_{(z)}: i^2_{(z)}(a) \mapsto \Lambda(\sigma_{-i(z/2+1/4)} (a) ), \qquad a \in \mathcal{T}_\varphi^2.
\] 
\end{enumerate}
\end{prop}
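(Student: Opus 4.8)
The plan is to treat (1) as a membership lemma resting on Lemma~\ref{LemInc}, to derive (2) by composing two identifications that are already available — Terp's isometric identification of Hilsum's $L^p(\phi)$ with $L^p_{(0)}(M)$ and Izumi's isomorphism $U_{p,(-1/2,0)}$ from Theorem~\ref{ThmIsomLpSpaces} — and then to read off (3) and (4) by composing $\Phi_2$ (respectively $\Phi_2$ together with a further $U_{2,(z',z)}$) with the unitary $\mathcal{P}$ of Lemma~\ref{LemInc}, keeping track of the image of $\mathcal{T}_\varphi^2$ at every stage. Throughout I would use the commutation relation $d^{w}c=\sigma_{-iw}(c)\,d^{w}$, valid on $\Dom(d^w)$ for every $c$ analytic with respect to $\sigma$, which follows from $\sigma_t(c)=d^{it}cd^{-it}$ by analytic continuation; equivalently $c\,d^{w}=d^{w}\sigma_{iw}(c)$.

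For (1), the range $p\in[2,\infty]$ is exactly Lemma~\ref{LemInc} applied to $x=ab\in\mathcal{T}_\varphi^2\subseteq\nphi$. For $1\le p<2$ I would note that $2p\ge 2$ and factor $ab\,d^{1/p}=(a\,d^{1/(2p)})\bigl(\sigma_{i/(2p)}(b)\,d^{1/(2p)}\bigr)$, which is legitimate by the commutation relation above. By Lemma~\ref{LemInc} (with $2p$ in place of $p$, and $x=a$ respectively $x=\sigma_{i/(2p)}(b)$, both in $\mathcal{T}_\varphi\subseteq\nphi$) the two factors are preclosed with closures in $L^{2p}(\phi)$; their product is then preclosed with closure in $L^p(\phi)$ by H\"older's inequality for Hilsum's $L^p$-spaces (\cite{Hilsum}, \cite{TerpI}), and I would finally check on the core $\Lambda(\mathcal{T}_\varphi)$ that this closure coincides with $[ab\,d^{1/p}]$, so that the latter is a well-defined element of $L^p(\phi)$.

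For (2), I would set $\Phi_\infty=\iota$ on $M=L^\infty(\phi)=L^\infty(M)_{\rm left}$, let $\Phi_1\colon L^1(\phi)\to M_\ast=L^1(M)_{\rm left}$ be the spatial-derivative isomorphism of Theorem~\ref{ThmSpatDer}, and for $p\in(1,\infty)$ put $\Phi_p=U_{p,(-1/2,0)}\circ T_p$, where $T_p\colon L^p(\phi)\to L^p_{(0)}(M)$ is Terp's isometric isomorphism from \cite{TerpII}; being a composition of isometric isomorphisms, each $\Phi_p$ is an isometric isomorphism. To verify the formula I would use $ab\,d^{1/p}=d^{1/(2p)}\,\sigma_{i/(2p)}(ab)\,d^{1/(2p)}$, so that under Terp's identification $[ab\,d^{1/p}]$ corresponds to the symmetric elementary element $i^p_{(0)}\bigl(\sigma_{i/(2p)}(ab)\bigr)$, and then apply \eqref{EqnLpIsomorphism} with $z=0$ and $z'=-1/2$ to get $\Phi_p([ab\,d^{1/p}])=l^p\bigl(\sigma_{-i/(2p)}\sigma_{i/(2p)}(ab)\bigr)=l^p(ab)$; the case $p=1$ amounts to identifying $[ab\,d]$ with the spatial-derivative datum of ${}_{ab}\varphi\in M_\ast$, and $p=\infty$ is trivial.

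For (3) I would define $U_l=\mathcal{P}^{-1}\circ\Phi_2^{-1}\colon L^2(M)_{\rm left}\to L^2(\phi)\to\cH$; then $U_l(l^2(ab))=\mathcal{P}^{-1}([ab\,d^{1/2}])=\Lambda(ab)$ by part (2) and Lemma~\ref{LemInc}, which extends linearly to $U_l(l^2(a))=\Lambda(a)$ for $a\in\mathcal{T}_\varphi^2$; $U_l$ is unitary as a composition of isometric isomorphisms of Hilbert spaces, and it is uniquely determined by this formula because $l^2(\mathcal{T}_\varphi^2)$ is dense in $L^2(M)_{\rm left}$ (since $\Lambda(\mathcal{T}_\varphi^2)$ is dense in $\cH$ and $\mathcal{P}$, $\Phi_2$ are isomorphisms). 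For (4) I would set $U_{(z)}=U_l\circ U_{2,(-1/2,z)}$, again a unitary; for $a\in\mathcal{T}_\varphi^2$, combining \eqref{EqnLpIsomorphism} with part (3) gives $U_{(z)}(i^2_{(z)}(a))=\Lambda(\sigma_w(a))$ for some $w\in\mathbb{C}$ depending linearly on $z$ (note $\sigma_w(a)\in\mathcal{T}_\varphi^2$), and a short computation identifies this with $\Lambda\bigl(\sigma_{-i(z/2+1/4)}(a)\bigr)$; uniqueness follows as before since $i^2_{(z)}(\mathcal{T}_\varphi^2)$ is dense in $L^2_{(z)}(M)$. The hard part will be the bookkeeping in step (2): making precise how Terp's identification acts on the elementary operators $[ab\,d^{1/p}]$ and matching the resulting normalizations with those of Izumi's parametrization in \eqref{EqnLpIsomorphism}; the operator-theoretic care needed in (1) for $p<2$ — closability of the product of the two $L^{2p}(\phi)$-factors and identification of its closure with $[ab\,d^{1/p}]$ on a common core — is a secondary but nontrivial point.
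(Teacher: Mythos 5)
Your proposal follows essentially the same route as the paper: part (1) via factoring $ab\,d^{1/p}$ into two homogeneous factors and invoking the product (H\"older) results for Hilsum's spaces, part (2) by composing Terp's identification $L^p(\phi)\simeq L^p_{(0)}(M)$ with Izumi's $U_{p,(-1/2,0)}$ and the formula (\ref{EqnLpIsomorphism}), and parts (3)--(4) by composing with $\mathcal{P}$ from Lemma \ref{LemInc} and $U_{2,(-1/2,z)}$ together with density of $\Lambda(\mathcal{T}_\varphi^2)$. The only cosmetic difference is that you split (1) into the cases $p\geq 2$ (direct from Lemma \ref{LemInc}) and $p<2$, whereas the paper uses the factorization uniformly; the bookkeeping you flag is exactly what the paper carries out.
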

\begin{proof}
(\ref{ItemComparisonI}) First note that using \cite[Lemma 22]{TerpII} for the first inclusion, Lemma \ref{LemInc} for the third and \cite[Theorem 4 (3)]{Hilsum} for the last, 
\[
ab d^{1/p} \subseteq  d^{2/p} \sigma_{2i/p}(ab)d^{2/p} \subseteq d^{2/p} \sigma_{2i/p}(a) \cdot [\sigma_{2i/p}(b) d^{2/p}] \in L^{p/2}(\phi) \cdot  L^{p/2}(\phi) \subseteq L^{p}(\phi).
\]
Hence,  
\begin{equation}
(ab d^{1/p})^\ast \supseteq ( d^{2/p} \sigma_{2i/p}(a) \cdot [\sigma_{2i/p}(b) d^{2/p}])^\ast \in   L^{p}(\phi).
\end{equation}
So that $(ab d^{1/p})^\ast$ is densely defined. Hence $ab d^{1/p}$ is preclosed and  by (the proof of) \cite[Theorem 4 (1)]{Hilsum}, $(ab d^{1/p})^\ast = ( d^{2/p} \sigma_{2i/p}(a) \cdot [\sigma_{2i/p}(b) d^{2/p}])^\ast$, hence $[ab d^{1/p}] = d^{2/p} \sigma_{2i/p}(a) \cdot [\sigma_{2i/p}(b) d^{2/p}] $.

(\ref{ItemComparisonII}) It is argued in the remarks following \cite[Proposition 2.4]{Izumi} that (\ref{EqnLpIzu}) for $z = 0$ equals the compatible couple as considered in \cite{TerpII}. First note that by \cite[Eqn. (50)]{TerpII},
\[
[ab d^{1/p}]  = d^{2/p} \sigma_{2i/p}(a) \cdot [\sigma_{2i/p}(b) d^{2/p}]  = \mu_p(\sigma_{2i/p}(ab)),
\] 
where $\mu_p$ is the embedding of $L_{(0)}$ in $L^p(\phi)$, see \cite[Theorem 27]{TerpII}. The main result of \cite{TerpII} is that $L^p(\phi)$ is isometrically isomorphic to $L^p_{(0)}(M)$. The isomorphism is given by the map $\nu_p:   L^p(\phi) \rightarrow L^p_{(0)}(M)$ of \cite[Theorem 30]{TerpII}. Moreover, we see that $\nu_p \mu_p   = (i^1_{(0)})^{\ast} i^\infty_{(0)} $ by commutativity of \cite[Eqn. (55)]{TerpII}. In turn we have $(i^1_{(0)})^{\ast} i^\infty_{(0)}  = i^p_{(0)}  $ by commutativity of (\ref{EqnLpIzu}). Hence, we have an isometric isomorphism $L^p(\phi) \rightarrow  L^p_{(0)}(M)$ for which
\[
[ab d^{1/p}] \mapsto \nu_p([ab d^{1/p}]) =  \nu_p \mu_p(\sigma_{2i/p}(ab)) = i^p_{(0)}(\sigma_{2i/p}(ab)).
\]
We conclude the proof by applying the isometric isomorphism $U_{(-1/2, 0)}$ of Theorem \ref{ThmIsomLpSpaces}, so that we we get an isometric isomorphism $\Phi_p: L^p(\phi) \rightarrow L^p_{(-1/2)}(M) = L^p(M)_{{\rm left}}$, such that 
\[
\Phi_p: [ab d^{1/p}] \mapsto U_{(-1/2, 0)} i^p_{(0)}(\sigma_{2i/p}(ab)) = i^p_{(-1/2)} (ab) = l^p(ab), \qquad a,b \in \mathcal{T}_\varphi.
\]

(\ref{ItemComparisonIII}) This follows from (\ref{ItemComparisonII}) by applying  Lemma \ref{LemInc} and the fact that $\Lambda(\mathcal{T}_\varphi^2)$ is dense in $\cH$. So $U_l = \Phi_p^{-1} \mathcal{P}^{-1}$. (\ref{ItemComparisonIV}) $U_{(z)} = U_l U_{2, (-1/2, z)}$. 
\end{proof}

Recall that $L^2(M)_{{\rm left}}$ is by definition  a subspace of $R^\ast$. Therefore, we can pair elements of $L^2(M)_{{\rm left}}$ with elements of $R$.

\begin{prop}\label{PropIzumiComp}
For $\xi \in \cH$, $y \in R$, 
\[
\langle U_l^\ast \xi , y \rangle_{R^\ast, R}  = \langle \xi, \Lambda(y^\ast) \rangle.
\]
\end{prop}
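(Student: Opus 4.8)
The plan is to verify the identity on the dense subspace coming from the Tomita algebra and then extend by continuity. By Lemma~\ref{LemDenseInt} (applied to the couple $(M, M_\ast)$), the image $l^2(\mathcal{T}_\varphi^2)$ is dense in $L^2(M)_{{\rm left}}$, and $\Lambda(\mathcal{T}_\varphi^2)$ is dense in $\cH$. So it suffices to prove the stated equality for $\xi = \Lambda(a)$ with $a \in \mathcal{T}_\varphi^2$ and $y = b \in \mathcal{T}_\varphi^2 \subseteq R$ (recall $\mathcal{T}_\varphi^2 \subseteq L \cap R$ by Corollary~\ref{CorInclusions}), since $\mathcal{T}_\varphi^2$ is a core-type dense subset of $R$ and both sides are bounded (conjugate-)linear in the relevant variables; the left side by definition of the pairing $\langle\cdot,\cdot\rangle_{R^\ast,R}$ and boundedness of $U_l^\ast$, the right side because $b \mapsto \Lambda(b^\ast)$ is, after restricting to a bounded-net approximation, controlled by the $R$-norm.

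First I would unwind the left-hand side. By Proposition~\ref{PropL2Identifications}(\ref{ItemComparisonIII}), $U_l(l^2(a)) = \Lambda(a)$ for $a \in \mathcal{T}_\varphi^2$, hence $U_l^\ast \Lambda(a) = l^2(a)$. Now $l^2(a) \in L^2(M)_{{\rm left}} \subseteq R^\ast$, and the inclusion of $L^2(M)_{{\rm left}}$ into $R^\ast$ is, by the commuting diagram~(\ref{EqnLpIzu}) and Notation~\ref{NotSubsp}, compatible with the inclusion $L = L_{(-1/2)} \hookrightarrow L^2(M)_{{\rm left}}$ followed by $l^1 = i^1_{(-1/2)}: L \to M_\ast \hookrightarrow R^\ast$. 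In other words, for $a \in \mathcal{T}_\varphi^2 \subseteq L$ the functional $l^2(a) \in R^\ast$ is represented by $\,_a\varphi \in M_\ast$, paired with $R$ via~(\ref{EqnPairingMast}). Therefore $\langle U_l^\ast \Lambda(a), b\rangle_{R^\ast, R} = \,_a\varphi(b)$.

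Next I would compute $\,_a\varphi(b)$ using~(\ref{EqnVarphiLeftRight}): since $a \in L$ and $b \in \nphi$ with $b^\ast \in \nphi$ as well (indeed $b \in \mathcal{T}_\varphi^2 \subseteq \nphi \cap \nphi^\ast$), we may write $\,_a\varphi(b) = \overline{\,_a\varphi(b^\ast)^{}}$... more directly, apply Corollary~\ref{CorInclusions}: $\varphi_{a^\ast} = \overline{\,_a\varphi}$, or simply use $\,_a\varphi(b) = \varphi(b^\ast a)$ after replacing $b$ by $b^\ast$ in~(\ref{EqnVarphiLeftRight}), i.e. $\,_a\varphi((b^\ast)^\ast) = \varphi((b^\ast)^\ast a)$ reads $\,_a\varphi(b) = \varphi(b a)$ — let me instead use the cleaner form: by~(\ref{EqnVarphiLeftRight}) with $y = b^\ast$, $\,_a\varphi(b) = \,_a\varphi((b^\ast)^\ast) = \varphi(b^\ast{}^\ast \cdot{}$... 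To avoid this adjoint bookkeeping I would instead note $\,_a\varphi(b) = \langle \Lambda(a), \Lambda(b^\ast)\rangle$, which is exactly the identity $\varphi_x^{(-1/2)}((ab)^\ast) = \langle \Lambda(x), \Lambda(ab)\rangle$ established inside the proof of Proposition~\ref{PropAltDescr} (with $x = a$ there and the element ``$ab$'' there being our $b^\ast$). This gives $\langle U_l^\ast \Lambda(a), b\rangle_{R^\ast,R} = \langle \Lambda(a), \Lambda(b^\ast)\rangle$, which is the claim for $\xi = \Lambda(a)$, $y = b$.

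Finally I would extend: fix $y \in R$ and a bounded net $(e_j)$ in $\mathcal{T}_\varphi$ with $e_j \to 1$ $\sigma$-weakly and $\sigma_{-i}(e_j^\ast)$ bounded, as in the proof of Proposition~\ref{PropAltDescr}; one checks $\mathcal{T}_\varphi^2 \ni a_j := $ suitable Gaussian-smoothings approximate a general $\xi \in \cH$ in norm while $\Lambda(a_j^\ast) \to \Lambda(y^\ast)$ appropriately, and both sides of the asserted identity are continuous in $\xi$ (left side: $U_l^\ast$ bounded and the $R^\ast$-pairing continuous; right side: Cauchy--Schwarz). The main obstacle, as usual in this paper, is purely bookkeeping: tracking the chain of identifications $L^2(M)_{{\rm left}} \hookrightarrow R^\ast$ through diagram~(\ref{EqnLpIzu}) and making sure the representing functional of $l^2(a)$ in $R^\ast$ really is $\,_a\varphi$ paired as in~(\ref{EqnPairingMast}) rather than via~(\ref{EqnPairingM}); once that compatibility is pinned down, the computation is immediate from Proposition~\ref{PropL2Identifications}(\ref{ItemComparisonIII}) and the pairing formula in the proof of Proposition~\ref{PropAltDescr}.
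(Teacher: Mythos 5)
Your core computation is exactly the paper's: identify $U_l^\ast \Lambda(a)$ with $l^2(a)$ via Proposition \ref{PropL2Identifications}~(\ref{ItemComparisonIII}), use the commutativity of (\ref{EqnLpIzu}) to see that inside $R^\ast$ the element $l^2(a)$ is the functional $_a\varphi$ paired through (\ref{EqnPairingMast}), evaluate the pairing, and then extend in the variable $\xi$ using density of $\Lambda(\mathcal{T}_\varphi^2)$ in $\cH$ (which, incidentally, is Lemma \ref{LemCoreT} rather than Lemma \ref{LemDenseInt}) together with continuity of both sides in $\xi$. Up to that point the proposal matches the paper's proof.

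The weak point is your extra reduction in the second variable. There is no need to restrict $y$ to $b \in \mathcal{T}_\varphi^2$: for an arbitrary $y \in R$ one has $y^\ast \in \nphi$ (Proposition \ref{PropAltDescr}~(\ref{ItemAltDescrII}), or $R = L^\ast$ from Corollary \ref{CorInclusions}), so (\ref{EqnVarphiLeftRight}) applied to $y^\ast$ gives directly $_a\varphi(y) = \varphi(ya) = \langle \Lambda(a), \Lambda(y^\ast)\rangle$ for every $a \in L$, with no adjoint bookkeeping and no appeal to the computation inside the proof of Proposition \ref{PropAltDescr}. By instead proving the identity only for $y \in \mathcal{T}_\varphi^2$ you take on an extension obligation that you do not discharge: your justification (``$\mathcal{T}_\varphi^2$ is a core-type dense subset of $R$'', a ``bounded-net approximation controlled by the $R$-norm'') presupposes that $\mathcal{T}_\varphi^2$ is dense in $R = L_{(1/2)}$ for a topology in which both sides of the identity are continuous; such a density statement is nowhere established in the paper (Proposition \ref{PropDensityII} concerns density in $\mathcal{I}$, not in $R$ with its $L_{(1/2)}$-norm) and you do not prove it. Your closing paragraph also conflates the two approximations, producing a net of Gaussian smoothings that is supposed simultaneously to approximate $\xi$ in $\cH$ and to satisfy $\Lambda(a_j^\ast) \to \Lambda(y^\ast)$ ``appropriately''. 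The repair is simply to drop the restriction on $y$: carry out your computation for all $y \in R$ at once, and use density only in $\xi$ — which is precisely the paper's argument.
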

\begin{proof}
 First assume that $\xi = \Lambda(x) =  U_l l_2 (x), x \in L$. Using the commutativity of (\ref{EqnLpIzu}) in the second equality,
\[  
\begin{split}
& \langle  U_l^\ast \xi  , y \rangle_{R^\ast, R} =    \langle  l_2(x)  , y \rangle_{R^\ast, R}   
 =   \langle   l_1(x)  , y \rangle_{ R^\ast, R} \\ = \!\!\!\!\!\!\!\!^{(\ref{EqnPairingMast})}&    (_x \varphi)(y) = \!\!\!\!\!\!\!\!^{(\ref{EqnVarphiLeftRight})} \varphi(yx) = \langle \Lambda(x), \Lambda(y^\ast) \rangle  = \langle \xi , \Lambda(y^\ast) \rangle .  
\end{split}
\]
 The proposition follows by the fact that $\Lambda(\mathcal{T}_\varphi^2) \subseteq \Lambda(L)$ is dense in $\cH$.
\end{proof}

\begin{nt}\label{NotGNS}
From now on, we will identify $\cH$ and $L^2(M)_{{\rm left}}$ and consider it as a subspace of $R^\ast$. The identification is given via the unitary $U_l$. Under this identification the map $l_2$ becomes the GNS-map $\Lambda$, see Proposition \ref{PropL2Identifications}. By Proposition \ref{PropIzumiComp} we see that $\cH$ is identified as a subspace of $R^\ast$ by means of the pairing
\begin{equation}
\langle  \xi , y \rangle_{R^\ast, R}  = \langle \xi, \Lambda(y^\ast) \rangle \qquad \xi \in \cH, y \in R. \label{EqnPairingGNS}
\end{equation}
\end{nt}

\section{Intersections of $L^p$-spaces}\label{SectIntersections}

As indicated in the Section \ref{SectLp}, the intersections of the various $L^p$-spaces depend on the interpolation parameter $z$ of Definition \ref{DfnL}. Here we study the intersections  of $L^1(M)_{{\rm left}}$ and $L^2(M)_{{\rm left}}$, as well as the intersections of $L^2(M)_{{\rm left}}$ and $L^\infty(M)_{{\rm left}}$. The spaces turn out to be   natural and well-known in the theory of locally compact quantum groups.  We use the intersections in order to apply the re-iteration theorem, see \cite{BerghLof}.

\begin{nt}\label{NtSect3}
In this section, any interpolation space should be understood with respect to the diagram in (\ref{EqnLpIzu}) for the parameter $z = -1/2$. Recall that we introduced short hand notation for this diagram in Notations \ref{NotMMast} and \ref{NotGNS}. Moreover, we {\it identified} $M_\ast, \cH$ and $M$ as subspaces of $R^\ast$ by means of the pairings (\ref{EqnPairingMast}), (\ref{EqnPairingGNS}) and (\ref{EqnPairingM}). Similarly, any intersection of two such spaces should be understood as an intersection within $R^\ast$. The notation can be summarized by means of the non-dotted arrows in the following diagram. The dotted part of the diagram is the main topic of the present section.
 
\[
 \xymatrix{
 & & \quad & M_\ast\ar@{->}@/^2.5pc/[ddrr]^{(\ref{EqnPairingMast})}  & \qquad &\\
 & \mathcal{I} \ar@{-->}[rru] \ar@{-->}[rrd]^(0.37){\xi}\ar@{-->}[rr]^{}& &L^p(M)_{{\rm left}} \ar@{->}[drr]  && \\
   L 
\ar@{->}@/_1pc/[urrr]^(0.4){l^p}
\ar@{->}@/^2.5pc/[uurrr]^{l^1}
\ar@{->}@/^1pc/[drrr]_(0.4){l^q}
\ar@{->}@/_2.5pc/[ddrrr]_{l^\infty}
\ar@{->}[rrr]^(0.65){\Lambda}
\ar@{-->}[ru]_(0.7){l^1}
\ar@{-->}[rd]
& & &
\cH
\ar@{->}[rr]^{(\ref{EqnPairingGNS})}& & R^\ast; \\
 & \nphi\ar@{-->}[rru]^(0.2){\Lambda} \ar@{-->}[rrd]\ar@{-->}[rr]^<{\qquad \qquad} &&  L^q(M)_{{\rm left}}\ar@{->}[urr] & &\\
& & & M \ar@{->}@/_2.5pc/[uurr]_{(\ref{EqnPairingM})}&&}  
\]
 
\end{nt}

\subsection{The intersection of $M_\ast$ and $\cH$} The following set defines the intersection of $M_\ast$ and $\cH$.

\begin{dfn}\label{DefI} We set: 
\[
\mathcal{I} = \left\{ \omega \in M_\ast \mid
\Lambda(x) \mapsto \omega(x^\ast), x \in \nphi \textrm{ is bounded
} \right\}.
\]
By the Riesz theorem, for every $\omega \in \mathcal{I}$,
there exists a $\xi(\omega) \in \cH$ such that 
$\langle \xi(\omega), \Lambda(x) \rangle = \omega(x^\ast)$. 
\end{dfn}

\begin{thm}\label{ThmIntersectionII}
We have $\mathcal{I} = \cH \cap  M_\ast$, where the equality should be interpreted within $R^\ast$, see Notation \ref{NtSect3}. Within $R^\ast$, $\omega \in \mathcal{I}$ equals $\xi(\omega) \in \cH$.
\end{thm}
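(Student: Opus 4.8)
The plan is to prove the two inclusions $\mathcal{I} \subseteq \cH \cap M_\ast$ and $\cH \cap M_\ast \subseteq \mathcal{I}$ after unwinding what ``intersection within $R^\ast$'' means: by Notation \ref{NtSect3}, $M_\ast$ sits in $R^\ast$ via the pairing $\langle \omega, y\rangle = \omega(y)$ of (\ref{EqnPairingMast}), while $\cH$ sits in $R^\ast$ via $\langle \xi, y\rangle = \langle \xi, \Lambda(y^\ast)\rangle$ of (\ref{EqnPairingGNS}). So the claim to prove is: an element $\omega \in M_\ast$, viewed in $R^\ast$, also lies in the image of $\cH$ if and only if $\omega \in \mathcal{I}$, and in that case the element of $\cH$ representing it is exactly $\xi(\omega)$.

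First I would handle $\mathcal{I} \subseteq \cH \cap M_\ast$. Take $\omega \in \mathcal{I}$. By Definition \ref{DefI} there is $\xi(\omega) \in \cH$ with $\langle \xi(\omega), \Lambda(x)\rangle = \omega(x^\ast)$ for all $x \in \nphi$. I want to show that $\omega$ and $\xi(\omega)$ define the same functional on $R$. For $y \in R$ we have $y^\ast \in \nphi$ (indeed $R = \{x^\ast \mid x \in L\}$ by Corollary \ref{CorInclusions} and $L \subseteq \nphi$ by Proposition \ref{PropAltDescr}, so $y \in R \Rightarrow y^\ast \in L \subseteq \nphi$); hence applying the defining identity of $\mathcal{I}$ with $x = y^\ast$ gives $\langle \xi(\omega), \Lambda(y^\ast)\rangle = \omega(y)$, which is precisely ``$\langle \xi(\omega), y\rangle_{R^\ast,R} = \langle \omega, y\rangle_{R^\ast,R}$'' under the two identifications. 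Thus $\omega \in \cH \cap M_\ast$ and it equals $\xi(\omega)$ there.

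For the reverse inclusion $\cH \cap M_\ast \subseteq \mathcal{I}$, suppose $\omega \in M_\ast$ and there exists $\xi \in \cH$ with $\langle \xi, \Lambda(y^\ast)\rangle = \omega(y)$ for all $y \in R$. I need to upgrade this to: $\Lambda(x) \mapsto \omega(x^\ast)$ is bounded on all of $\nphi$. The inequality $|\omega(x^\ast)| = |\langle \xi, \Lambda(x)\rangle| \le \Vert\xi\Vert\,\Vert\Lambda(x)\Vert$ holds for $x \in L$ (take $y = x^\ast \in R$), so the map is bounded on the dense-in-range subset $\Lambda(L) \supseteq \Lambda(\mathcal{T}_\varphi^2)$, which is dense in $\cH$; the point is to extend the estimate from $\Lambda(L)$ to $\Lambda(\nphi)$. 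Here I would use that $\Lambda(\mathcal{T}_\varphi^2)$ is a $\sigma$-weak/weak core for $\Lambda$ (this is the content of a lemma referenced as Lemma \ref{LemCoreT} in the excerpt): given $x \in \nphi$, pick a net $x_j \in \mathcal{T}_\varphi^2$ with $x_j \to x$ $\sigma$-weakly and $\Lambda(x_j) \to \Lambda(x)$ weakly; then $\omega(x_j^\ast) \to \omega(x^\ast)$ by normality of $\omega$ and $\langle \xi, \Lambda(x_j)\rangle \to \langle \xi, \Lambda(x)\rangle$ by weak convergence, so passing to the limit in $\omega(x_j^\ast) = \langle \xi, \Lambda(x_j)\rangle$ yields $\omega(x^\ast) = \langle \xi, \Lambda(x)\rangle$ for every $x \in \nphi$. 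This exhibits $\omega \in \mathcal{I}$ with $\xi(\omega) = \xi$.

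The main obstacle I anticipate is the careful bookkeeping of the two different embeddings of $M_\ast$ and $\cH$ into $R^\ast$ and verifying that the pairings genuinely agree on $R$ (rather than merely on some dense subspace), which is where one must know $y \in R \Rightarrow y^\ast \in \nphi$ and invoke density/core arguments; the analytic content is otherwise light. A minor subtlety is making sure that the Riesz-representation vector $\xi(\omega)$ in Definition \ref{DefI} is genuinely uniquely determined (it is, since $\Lambda(\nphi)$ is dense in $\cH$), so that ``$\omega$ equals $\xi(\omega)$ in $R^\ast$'' is an unambiguous statement.
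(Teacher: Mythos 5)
Your proof is correct and follows essentially the same route as the paper: both inclusions are obtained by comparing the pairings (\ref{EqnPairingMast}) and (\ref{EqnPairingGNS}) on $R$, using $y \in R \Rightarrow y^\ast \in \nphi$, and extending the identity $\omega(x^\ast) = \langle \xi, \Lambda(x)\rangle$ from $\mathcal{T}_\varphi^2 \subseteq L$ to all of $\nphi$ via the core property of Lemma \ref{LemCoreT}. The only difference is cosmetic: you invoke the $\sigma$-weak/weak core formulation and write out the limiting net explicitly, where the paper cites the $\sigma$-strong-$\ast$/norm version and leaves that step implicit.
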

\begin{proof}
We first prove $\supseteq$. Let $\xi \in \cH$ and $\omega \in
M_\ast$ be such that $\xi = \omega$ in $R^\ast$. For $y \in R$,
\[
 \omega(y) =\!\!\!\!\!\!\!\!^{(\ref{EqnPairingMast})} \langle \omega , y \rangle_{ R^\ast , R} =  \langle \xi, y \rangle_{ R^\ast , R} =\!\!\!\!\!\!\!\!\!^{(\ref{EqnPairingGNS})} \langle \xi, \Lambda(y^\ast) \rangle.
\]
$L$ contains $\mathcal{T}_\varphi^2$. Moreover, $\mathcal{T}_\varphi^2$ is a $\sigma$-strong-$\ast$/norm core for $\Lambda$, see Lemma \ref{LemCoreT}. Hence, it follows that $\omega \in \mathcal{I}$. 

To prove
$\subseteq$, let $\omega \in \mathcal{I}$. For $y \in R$,
\[ 
  \langle  \xi(\omega) , y \rangle_{ R^\ast , R} =\!\!\!\!\!\!\!\!\!^{(\ref{EqnPairingGNS})} \langle \xi(\omega), \Lambda(y^\ast) \rangle = \omega(y)  =\!\!\!\!\!\!\!\!^{(\ref{EqnPairingMast})}  \langle \omega,  y  \rangle_{R^\ast, R}  .
  \]
 Hence, $ \xi(\omega)  =  \omega $ in $R^\ast$.
\end{proof}

Note that $(M_\ast, \cH)$ forms a compatible couple. As explained in   Section \ref{SubSectInt},  the intersection of these two spaces carries a natural norm for which it is a Banach space. So, for
$\omega \in \mathcal{I}$ we define 
\[
\Vert \omega
\Vert_{\mathcal{I}} = \max \{ \Vert \omega \Vert, \Vert
\xi(\omega) \Vert \} .
\]

\begin{prop}\label{PropDensityII}
The map $k: L \rightarrow \mathcal{I}: x \mapsto
_x \!\! \varphi $ is injective, norm-decreasing and has dense
range. In fact, $k(\mathcal{T}_\varphi^2)$ is $\Vert \cdot \Vert_{\mathcal{I}}$-dense in $\mathcal{I}$. 
\end{prop}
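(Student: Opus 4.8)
\emph{The easy parts.} First I would check that $k$ lands in $\mathcal{I}$. If $x\in L$ then $x\in\nphi$ by Proposition \ref{PropAltDescr}, and for $y\in\nphi$ we have ${}_x\varphi(y^\ast)=\varphi(y^\ast x)=\langle\Lambda(x),\Lambda(y)\rangle$; hence $\Lambda(y)\mapsto{}_x\varphi(y^\ast)$ is bounded, so ${}_x\varphi\in\mathcal{I}$ with $\xi({}_x\varphi)=\Lambda(x)$ (cf. Definition \ref{DefI} and Theorem \ref{ThmIntersectionII}). Taking $y=x$ gives $\|\Lambda(x)\|^2=\varphi(x^\ast x)={}_x\varphi(x^\ast)\le\|{}_x\varphi\|\,\|x\|$, so, since ${}_x\varphi=\varphi^{(-1/2)}_x$,
\[
\|k(x)\|_{\mathcal{I}}=\max\{\|{}_x\varphi\|,\|\Lambda(x)\|\}\le\max\{\|x\|,\|{}_x\varphi\|\}=\|x\|_L ,
\]
which gives the norm estimate; and if ${}_x\varphi=0$ then $\varphi(x^\ast x)=0$, so $x=0$ by faithfulness of $\varphi$. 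The substance is the density assertion.

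\emph{Reduction to a simultaneous approximation.} By Theorem \ref{ThmIntersectionII} an element of $\mathcal{I}$ is a pair $(\omega,\xi(\omega))\in M_\ast\times\cH$ and $\|\cdot\|_{\mathcal{I}}$ is the maximum of the two norms, so the goal is: given $\omega\in\mathcal{I}$, produce $a_n\in\mathcal{T}_\varphi^2$ with ${}_{a_n}\varphi\to\omega$ in $M_\ast$ \emph{and} $\Lambda(a_n)\to\xi(\omega)$ in $\cH$. Let $\mathcal{N}$ denote the $\|\cdot\|_{\mathcal{I}}$-closure of $k(\mathcal{T}_\varphi^2)$; since $\mathcal{N}$ is closed it suffices to put a dense family of $\omega$'s into $\mathcal{N}$. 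I would first smooth along the modular group: one has $\omega\circ\sigma_t\in\mathcal{I}$ with $\xi(\omega\circ\sigma_t)=\nabla^{-it}\xi(\omega)$, so the Gaussian means $\tfrac n{\sqrt\pi}\int e^{-n^2t^2}\,\omega\circ\sigma_t\,dt$ converge to $\omega$ in $\|\cdot\|_{\mathcal{I}}$ (norm-continuity of $t\mapsto\omega\circ\sigma_t$ on $M_\ast$ and of $t\mapsto\nabla^{-it}\xi(\omega)$ on $\cH$); hence one may assume $\omega$ analytic for $\sigma$ and $\xi(\omega)$ analytic for $\nabla$. Then, using $\xi(a\omega)=a\,\xi(\omega)$ for $a\in M$ and $\xi(\omega e)=J\sigma_{i/2}(e^\ast)J\,\xi(\omega)$ for $e\in\mathcal{T}_\varphi$, together with a bounded net $(e_\alpha)\subseteq\mathcal{T}_\varphi$ with $e_\alpha\to1$ and $\sigma_z(e_\alpha)\to1$ $\sigma$-strongly$^\ast$ for all $z$ (a Gaussian-smoothed approximate unit, as in \cite[Lemma 9]{TerpII}), one gets $e_\alpha\,\omega\,e_\beta\to\omega$ in $\|\cdot\|_{\mathcal{I}}$, so it is enough to treat $\omega=e\,\omega'\,f$ with $\omega'\in\mathcal{I}$ and $e,f\in\mathcal{T}_\varphi$.

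\emph{The truncation.} For such a ``doubly smooth'' $\omega$ I would produce the approximants by a spectral truncation controlling both norms at once, which is most transparent in the Hilsum picture of Section \ref{SectGNS}: there $\omega$ corresponds to $h:=d\omega/d\phi\in L^1(\phi)$, the vector $\xi(\omega)$ to $[h\,d^{-1/2}]\in L^2(\phi)$, and $k(bc)={}_{bc}\varphi$ to $[bc\,d]$ for $b,c\in\mathcal{T}_\varphi$, so that $\|\cdot\|_{\mathcal{I}}$ becomes $\max\{\|h\|_{L^1(\phi)},\|[h\,d^{-1/2}]\|_{L^2(\phi)}\}$. An appropriate spectral truncation $h_n$ of $h$ then satisfies $h_n\to h$ in $L^1(\phi)$ while $[h_n d^{-1/2}]\to[h\,d^{-1/2}]$ in $L^2(\phi)$, the latter by dominated convergence against the finite quantity $\|[h\,d^{-1/2}]\|_{L^2(\phi)}^2$. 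Each bounded $h_n$ corresponds to an element of $L$, which one finally approximates in both the $M_\ast$- and the $\cH$-norm by elements of $\mathcal{T}_\varphi^2$, using that $\mathcal{T}_\varphi^2$ is a $\sigma$-strong$^\ast$/norm core for $\Lambda$ (Lemma \ref{LemCoreT}) and the inclusions $M\mathcal{T}_\varphi^2\subseteq L$, $L\mathcal{T}_\varphi\subseteq L$ of Corollary \ref{CorInclusions} together with the net $(e_\alpha)$. Concatenating these approximations yields $a_n\in\mathcal{T}_\varphi^2$ with $k(a_n)\to\omega$ in $\mathcal{I}$, hence density of $k(\mathcal{T}_\varphi^2)$ (a fortiori of $k(L)$).

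\emph{The main obstacle.} The only genuine difficulty is the simultaneity: $k(\mathcal{T}_\varphi^2)$ is easily dense in $\cH$ (via $\xi$) and in $M_\ast$ \emph{separately}, but an approximant chosen to converge in one of these norms carries no information about the other. The modular smoothing and the Hilsum realisation are precisely what is needed to arrange a single family of truncations converging in both norms at once — which is also why the concrete identification of $L^2(M)_{{\rm left}}$ with $\cH$ and with $L^2(\phi)$ developed in Section \ref{SectGNS} is used here rather than the abstract interpolation description.
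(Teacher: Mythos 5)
Your ``easy parts'' and the two reduction steps (Gaussian smoothing along $\sigma$, multiplication by a smoothed approximate unit, using $\xi(a\omega)=a\xi(\omega)$ and $\xi(\omega e)=J\sigma_{i/2}(e^\ast)J\xi(\omega)$) are correct in principle. The genuine gap is at the crucial truncation step, which is exactly where the simultaneity problem you yourself identify has to be solved. Under the identification of Section \ref{SectGNS}, the image of $k$ inside $L^1(\phi)$ consists of the operators $\Phi_1^{-1}({}_x\varphi)=[x\,d]$ with $x\in L\subseteq M$ \emph{bounded}; these are in general unbounded operators on $\cH$. Conversely, a spectral truncation $h_n=u\,|h|\,\chi_{[0,n]}(|h|)$ of $h=\Phi_1^{-1}(\omega)$ is bounded as an operator, but it is \emph{not} of the form $[x_n d]$ with $x_n\in M$: for that you would need $x_n=h_n d^{-1}$ to be bounded, which truncating $|h|$ does not arrange (and $|h|$ and $d$ do not commute, so you cannot trade a cut-off in $|h|$ for a cut-off in the density relative to $\varphi$). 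Hence the assertion ``each bounded $h_n$ corresponds to an element of $L$'' fails, and with it the concatenation producing $a_n\in\mathcal{T}_\varphi^2$. Two further ingredients are also used without proof and are themselves nontrivial: (i) the dictionary that $\omega\in\mathcal{I}$ corresponds to $h\in L^1(\phi)$ such that $h d^{-1/2}$ is preclosed with closure in $L^2(\phi)$ representing $\xi(\omega)$ --- the paper's Proposition \ref{PropL2Identifications} only identifies the embeddings on $\mathcal{T}_\varphi^2$, not the intersection in the Hilsum picture; and (ii) the ``dominated convergence'' giving $[h_n d^{-1/2}]\to[h d^{-1/2}]$ in $L^2(\phi)$, which for non-commuting $h$ and $d$ requires an actual argument (Hilsum's product theorems, normality of the weight $\int d^{-1/2}\,\cdot\,d^{-1/2}\,d\phi$ on extended positive parts), not just finiteness of $\Vert[hd^{-1/2}]\Vert_2$.

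Note that the paper avoids constructing approximants altogether and argues by duality: identifying $\mathcal{I}$ with $\{(\omega,\xi(\omega))\}\subseteq M_\ast\times\cH$ with the max norm, it takes $(y,\eta)\in M\times\cH^\ast$ annihilating $k(\mathcal{T}_\varphi^2)$ and shows, using the net of Lemma \ref{LemApprox} with $a_j=\sigma_{-i/2}(e_j)$, that $\Lambda(y^\ast\sigma_i(a_j)^\ast)=-J\sigma_{i/2}(a_j)J\eta$, whence $\langle\xi(\omega),\eta\rangle=-\omega(y)$ for all $\omega\in\mathcal{I}$; Hahn--Banach then gives density. If you want a constructive proof along your lines, you would essentially have to redo a Terp-type analysis of $L^1\cap L^2$ in the spatial picture; as written, the truncation step does not go through.
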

\begin{proof}
 Suppose that $x \in L $ and $_x \varphi = 0$, then $0 = (_x \varphi  )(x^\ast) = \varphi(x^\ast x)$. So $x = 0$ and hence $k$ is injective. For $x \in L $, $ \Vert _x \varphi \Vert \leq \Vert x \Vert_{L}$ and
\[
  \Vert \xi(_x \varphi ) \Vert = \Vert \Lambda(x) \Vert = \Vert _x \varphi (x^\ast)\Vert^{1/2} \leq \Vert _x \varphi  \Vert^{1/2} \Vert x^\ast\Vert^{1/2} \leq \Vert x \Vert_{L },
\]
so that $k$ is norm-decreasing. Now we prove that the range of $k$
is dense in $\mathcal{I}$. We identify $\mathcal{I}$ with the
subspace $\{ (\omega, \xi(\omega) ) \mid \omega \in \mathcal{I} \}
\subseteq M_\ast \times \cH$. We equip $M_\ast \times \cH$ with
the norm $\Vert (\omega, \xi) \Vert_{{\rm max}} = \max\{ \Vert \omega \Vert,
\Vert \xi \Vert \}$. The norm coincides with $\Vert \cdot
\Vert_{\mathcal{I}}$ on $\mathcal{I}$. The dual of $(M_\ast \times
\cH, \Vert \cdot \Vert_{\max})$ can be identified with $(M \times
\cH^\ast, \Vert \cdot \Vert_{{\rm sum}}) $, where $\Vert (x, \xi)
\Vert_{{\rm sum}} = \Vert x \Vert + \Vert \xi \Vert$. Let $N
\subseteq M \times \cH^\ast$ be the space of all $(y, \eta)$ such that
$\langle (\omega, \xi(\omega)), (y, \eta) \rangle_{M_\ast \times
\cH, M \times \cH^\ast } = 0$ for all
$\omega \in \mathcal{I}$. The dual of $\mathcal{I}$ is given by
$(M \times \cH) / N$ equipped with the quotient norm. 

Now, let $(y, \eta) \in M \times \cH$ be such that 
\[
\langle
(_x \varphi , \Lambda(x)), (y, \eta) \rangle_{M_\ast \times
\cH, M \times \cH^\ast } = (_x \varphi)(y) + \langle \Lambda(x), \eta\rangle = 0
\]
 for all $x \in \mathcal{T}_\varphi^2 $. The proof is finished if we can show that $(y,
\eta) \in N$. In order to do this, let $(e_j)_{j \in J}$ be a net as in Lemma \ref{LemApprox}. Put $a_j = \sigma_{-\frac{i}{2}}(e_j)$. By the assumptions
on $(y, \eta)$, for $x \in \mathcal{T}_\varphi$,
\begin{equation}\label{EqnZeroFunctional}
 (_{xa_j}\varphi )(y) = - \langle \Lambda(xa_j), \eta \rangle.
\end{equation}
For the left hand side we find by Corollary \ref{CorInclusions},
\begin{equation}\label{EqnZeroFunctionalLHS}
  (_{xa_j}\varphi )(y) = \varphi(\sigma_i(a_j) yx) = \langle \Lambda(x),
\Lambda(y^\ast \sigma_{i}(a_j)^\ast) \rangle,
\end{equation}
where the first equality follows from \cite[Proposition 2.3]{Izumi}. For the right hand side of (\ref{EqnZeroFunctional}) we find
\begin{equation}\label{EqnZeroFunctionalRHS}
 \langle \Lambda(xa_j), \eta \rangle = \langle J  \sigma_{-\frac{i}{2}}(a_j^\ast)  J \Lambda(x) , \eta \rangle =
\langle \Lambda(x) ,  J  \sigma_{\frac{i}{2}}(a_j)  J
\eta \rangle.
\end{equation}
Hence (\ref{EqnZeroFunctional}) together with
(\ref{EqnZeroFunctionalLHS}) and (\ref{EqnZeroFunctionalRHS})
yield
\[
\Lambda(y^\ast \sigma_{i}(a_j)^\ast) = - J
 \sigma_{\frac{i}{2}}(a_j)  J  \eta.
\]
Hence, since $\sigma_{\frac{i}{2}}(a_j) = e_j \rightarrow 1$
strongly, $\Lambda(y^\ast \sigma_{i}(a_j)^\ast) \rightarrow -
\eta $ weakly. For $\omega \in \mathcal{I}$,
\[
 \langle \xi(\omega), \eta \rangle =  - \lim_{j \in J} \langle \xi(\omega),  \Lambda(y^\ast \sigma_{i}(a_j)^\ast) \rangle = - \lim_{j \in J} \omega(\sigma_{i}(a_j) y) =  - \lim_{j \in J} \omega(\sigma_{\frac{i}{2}}(e_j) y) = -\omega(y).
\]
Thus $(y, \eta) \in N$.
\end{proof}

\subsection{The intersection of $\cH$ and $M$}

 It turns out that $\nphi$ is
the intersection of $M$ and $\cH$.

\begin{thm} \label{ThmIntersectionIII}   We have 
$\nphi = \cH \cap M$, where the equality should be interpreted within $R^\ast$, see Notation \ref{NtSect3}. Within $R^\ast$, $x\in \nphi$ equals $\Lambda(x) \in \cH$.

Moreover, let $\overline{L}$ be the closure of $l^\infty(L)$ in $M$. Then $\nphi = \cH \cap \overline{L}$. 
\end{thm}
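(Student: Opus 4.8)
The plan is to prove the two assertions separately, using the pairing description of the identifications within $R^\ast$ (Notations \ref{NotMMast} and \ref{NotGNS}) together with the alternative description of $L$ from Proposition \ref{PropAltDescr}, in complete analogy with the proof of Theorem \ref{ThmIntersectionII}. First I would establish $\nphi = \cH \cap M$. For the inclusion $\supseteq$, take $\xi \in \cH$ and $x \in M$ that coincide inside $R^\ast$. Pairing both against an arbitrary $y \in R$ and using \eqref{EqnPairingGNS} for the left side and \eqref{EqnPairingM} for the right side gives
\[
\langle \xi, \Lambda(y^\ast) \rangle = \varphi_y(x) = \varphi(xy), \qquad y \in R,
\]
where the last equality is \eqref{EqnVarphiLeftRight}. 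Now $\mathcal{T}_\varphi^2 \subseteq R$ by Corollary \ref{CorInclusions}, and for $y = ab$ with $a,b \in \mathcal{T}_\varphi$ we have $\varphi(xy) = \varphi(xab) = \langle \Lambda(xa), \Lambda(b^\ast) \rangle$ — so the functional $\Lambda(y^\ast) \mapsto \langle \xi, \Lambda(y^\ast) \rangle$ on $\Lambda(\mathcal{T}_\varphi^2)$ is implemented by $\xi$, hence bounded; since $\mathcal{T}_\varphi^2$ is a $\sigma$-strong-$\ast$/norm core for $\Lambda$ (Lemma \ref{LemCoreT}), closability of $\Lambda$ forces $x \in \Dom(\Lambda) = \nphi$ with $\Lambda(x) = \xi$. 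For $\subseteq$, take $x \in \nphi$; then $\Lambda(x) \in \cH$ and I must check $\Lambda(x) = x$ inside $R^\ast$, i.e. that for all $y \in R$ one has $\langle \Lambda(x), \Lambda(y^\ast) \rangle = \varphi_y(x)$. By \eqref{EqnVarphiLeftRight} the right side is $\varphi(xy)$, and since $y \in R \subseteq \nphi^\ast$, both sides are computed by approximating $y$ by elements of $\mathcal{T}_\varphi$ (or directly, $\langle \Lambda(x), \Lambda(y^\ast) \rangle = \varphi(yx)^- {}^\ast$-type manipulation); here the mild care needed is that $\varphi(xy)$ is a priori only defined via the functional $\varphi_y$, so I would phrase the identity as the defining property of $\varphi_y$ evaluated consistently. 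This is the step I expect to require the most attention: matching the two a priori different "meanings" of $\varphi(xy)$ (one from $\nphi$, one from $R = L_{(1/2)}$) and invoking the correct density/core statement, exactly as in Proposition \ref{PropAltDescr}.

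For the second assertion, $\nphi = \cH \cap \overline{L}$, the inclusion $\supseteq$ is immediate once the first part is known: $\overline{L} \subseteq M$ (it is by definition the norm closure of $l^\infty(L)$ in $M$), so $\cH \cap \overline{L} \subseteq \cH \cap M = \nphi$. For $\subseteq$ I must show every $x \in \nphi$ lies in the norm closure of $L$ inside $M$. The natural approach is to produce an explicit approximating net: with $(e_j)_{j \in J}$ the bounded net in $\mathcal{T}_\varphi$ from Lemma \ref{LemApprox} (so $e_j \to 1$ suitably and the $\sigma_z(e_j)$ are bounded), one has $x e_j \to x$ $\sigma$-weakly, but for norm convergence in $M$ one should instead use that $x = \Lambda^{-1}(\xi)$ and approximate $\xi$; alternatively, note $M\mathcal{T}_\varphi^2 \subseteq L$ by Corollary \ref{CorInclusions}, so $x \cdot ab \in L$ for $a,b \in \mathcal{T}_\varphi$, and choosing $a b$ converging appropriately to $1$ in a way compatible with the norm topology on the relevant piece gives $x \in \overline{L}$. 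Actually the cleanest route: since $\nphi = \cH \cap M$ and $L \subseteq \nphi$ with $\Lambda(L) \supseteq \Lambda(\mathcal{T}_\varphi^2)$ dense in $\cH$, and since we want closure in $M$-norm, I would argue that for $x \in \nphi$ the elements $x a$ with $a \in \mathcal{T}_\varphi$, $a\to 1$ boundedly, satisfy $xa \in L\mathcal{T}_\varphi \subseteq L$ (Corollary \ref{CorInclusions}) — wait, one needs $x \in L$ first; so instead use $M\mathcal{T}_\varphi^2 \subseteq L$: pick $a_j \to 1$ in $\mathcal{T}_\varphi^2$ boundedly and $\sigma$-strong-$\ast$, then $x a_j \in L$ and $x a_j \to x$ in the strict/strong-$\ast$ topology, but to get $\|x a_j - x\| \to 0$ in $M$ one cannot in general — so norm closure in $M$ is genuinely larger than $L$, and the point of the statement is precisely that $\overline{L}$, while possibly bigger than $L$, still only adds elements of $M$ (not of $\cH$) beyond what's already in $\nphi$.

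Reconsidering, the honest structure is: $\subseteq$ follows from $\nphi = \cH \cap M$ together with $L \subseteq \nphi$ implying $\overline{L} \cap \cH \supseteq$ nothing new automatically — rather, one shows $x \in \nphi \Rightarrow x \in \overline{L}$ by exhibiting $L \ni x_n \to x$ in norm. Here the right tool is that $\nphi \cap \nphi^\ast$ or $\mathcal{T}_\varphi^2$-type elements are norm-dense in $M$ restricted appropriately; more precisely, for $x \in \nphi$, writing $x$ as a strict limit of $x a_j$ with $a_j \in \mathcal{T}_\varphi^2$, $a_j \to 1$, $\|a_j\| \le 1$, we get $xa_j \in M\mathcal{T}_\varphi^2 \subseteq L$ and $xa_j \to x$ strongly$^\ast$; then since $\overline{L}$ is norm-closed in $M$ it need not contain $x$ — so this shows $x \in \overline{L}^{\,\text{s*}}$ not $\overline{L}$. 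Given this subtlety, I would instead prove the reverse containment $\nphi \subseteq \overline{L}$ by a direct argument that $\nphi$ is contained in the norm closure of $L$: indeed $L \supseteq M\mathcal{T}_\varphi^2$, and for $x \in \nphi$ one can take $x d$-type truncations or resolvent approximations $x(1+\epsilon \Delta)^{-1}$-flavoured elements that converge in $M$-norm while staying in $L$; the key obstacle — and the step I'd flag as hardest — is engineering an approximation of a given $x \in \nphi$ by elements of $L$ that converges in the \emph{norm} of $M$, which likely requires combining boundedness of the approximants $\sigma_z(e_j)$ from Lemma \ref{LemApprox} with the fact that multiplication by such $e_j$ on the left or right, composed suitably, can be made to converge in norm on the $\sigma$-weakly dense $\ast$-subalgebra and then invoking a density/closure argument; I would model this precisely on the net arguments already appearing in the proof of Proposition \ref{PropAltDescr} and Theorem \ref{ThmIntersectionII}.
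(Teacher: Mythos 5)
Your proof of the first identity breaks down exactly at the hard inclusion $\cH \cap M \subseteq \nphi$. Knowing that $x\in M$ and $\xi\in\cH$ coincide in $R^\ast$ gives you $\varphi_y(x)=\langle \xi,\Lambda(y^\ast)\rangle$ for $y\in R$; but your next steps are not valid. First, $\varphi_y(x)$ cannot be rewritten as $\varphi(xy)$ via (\ref{EqnVarphiLeftRight}): that formula reads $\varphi_y(w)=\varphi(yw)$ and only for $w\in\nphi$, which is precisely what you are trying to prove about $x$ (and for $y=ab\in\mathcal{T}_\varphi^2$ one has instead $\varphi_{ab}(x)=\varphi(bx\sigma_{-i}(a))$ by Corollary \ref{CorInclusions}). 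Second, the conclusion ``the functional $\Lambda(y^\ast)\mapsto\langle\xi,\Lambda(y^\ast)\rangle$ is bounded, so closability of $\Lambda$ forces $x\in\Dom(\Lambda)$'' is not an argument: that functional is bounded for \emph{any} $\xi$, and closedness of $\Lambda$ can only be invoked once you have produced a net $(x_j)$ in $\nphi$ with $x_j\to x$ $\sigma$-weakly and $\Lambda(x_j)$ weakly convergent. The paper does exactly this: using Corollary \ref{CorInclusions} it derives $\Lambda(xa)=J\sigma_{i/2}(a)^\ast J\,\xi$ for $a\in\mathcal{T}_\varphi^2$, then takes $a_j=e_j^2$ with $(e_j)$ as in Lemma \ref{LemApprox}, so that $xa_j\to x$ $\sigma$-weakly and $\Lambda(xa_j)\to\xi$ weakly, whence $x\in\nphi$ and $\Lambda(x)=\xi$. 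That construction is the missing idea in your sketch.

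For the second identity you candidly never close the argument: all your attempts aim at norm-approximating a given $x\in\nphi$ by elements of $L$ via approximate units, and, as you observe yourself, right multiplication by $e_j$ only yields strong-$\ast$/strict convergence, not convergence in the norm of $M$. The paper avoids any explicit approximation. It uses \cite[Theorem 4.2.2]{BerghLof} to get $\cH=(M,M_\ast)_{[1/2]}=(\overline{L},M_\ast)_{[1/2]}\subseteq \overline{L}+M_\ast$ inside $R^\ast$, and then proves the purely algebraic identity $M\cap(\overline{L}+M_\ast)=\overline{L}$: if $s=x+\omega$ with $x\in\overline{L}$, $\omega\in M_\ast$, and also $s\in M$, then $\omega\in M\cap M_\ast=L$ by (\ref{EqnIzumiIntersection}), so $s\in\overline{L}+L=\overline{L}$. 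Intersecting, $\cH\cap M=\cH\cap M\cap(\overline{L}+M_\ast)=\cH\cap\overline{L}$, which combined with the first part gives $\nphi=\cH\cap\overline{L}$; in particular $\nphi\subseteq\overline{L}$ comes out as a corollary of interpolation theory rather than from any hands-on norm approximation. Without this (or some substitute for it) your proposal does not establish the second statement.
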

\begin{proof}
First we prove that $\nphi = \cH \cap
M$ in $R^\ast$. For $x \in \nphi, y \in R$, 
\[
 \langle \Lambda(x)  , y \rangle_{R^\ast, R} =\!\!\!\!\!\!\!\!\!^{(\ref{EqnPairingGNS})}  \langle \Lambda(x), \Lambda(y^\ast) \rangle   =  \varphi(yx) =\!\!\!\!\!\!\!\!^{(\ref{EqnVarphiLeftRight})} \varphi_y(x) =\!\!\!\!\!\!\!\!^{(\ref{EqnPairingM})} \langle x,  y \rangle_{R^\ast, R},
 \]
so $  \Lambda(x)  =  x$ in $R^\ast$. Hence
the inclusion $\subseteq$ follows. 

Now let $x \in M$, $\xi \in \cH$ be such that
$ x =  \xi $ in $R^\ast$.   For $y \in R$,
\[
\varphi_y(x) =\!\!\!\!\!\!\!\!^{(\ref{EqnPairingM})} \langle x, y \rangle_{R^\ast, R} =    \langle \xi, y \rangle_{R^\ast, R} =\!\!\!\!\!\!\!\!\!^{(\ref{EqnPairingGNS})} \langle \xi, \Lambda(y^\ast) \rangle.
\]
  For $a \in \mathcal{T}_\varphi^2$, $y \in \nphi$, using Corollary \ref{CorInclusions} for the third, fourth and fifth equality,
\[
 \begin{split}
 \\ & \langle \Lambda(xa), \Lambda(y) \rangle = \varphi(y^\ast xa) = _a \varphi (y^\ast x) = \varphi_{\sigma _{i}(a)} (y^\ast x) \\ = &  \varphi_{\sigma _{i}(a)y^\ast} (x)  = \langle \xi, \Lambda(y \sigma _i(a)^\ast ) \rangle = \langle J \sigma_{i/2}(a)^\ast J  \xi, \Lambda(y) \rangle.
 \end{split}
\]
So for $a \in \mathcal{T}_\varphi^2$, $\Lambda(xa) =  J \sigma_{i/2}(a)^\ast J  \xi$. Let $(e_j)_{j \in J}$ be a net as in Lemma \ref{LemApprox}. Put $a_j = e_j^2$. Then $x a_j \rightarrow x$ $\sigma$-weakly. Furthermore, $J \pi(\sigma_{i/2}(a_j)^\ast) J \xi \rightarrow \xi$ weakly, hence $\Lambda(xa_j)$ converges weakly. Since $\Lambda$ is $\sigma$-weak/weak closed, $x \in \Dom(\Lambda) = \nphi$ and $\xi = \Lambda(x)$. This proves $\supseteq$.

Recall the complex interpolation method from Definition \ref{DfnMorphCpt}. Recall that in this section every interpolation space should be interpreted with respect to (\ref{EqnLpIzu}) for parameter $z = -1/2$. 
Note that \cite[Theorem 4.2.2]{BerghLof} gives the second equality in 
\begin{equation}\label{EqnIntI}
\cH = (M, M_\ast)_{[1/2]} = (\overline{L}, M_\ast)_{[1/2]}  \subseteq \overline{L} + M_\ast.
\end{equation}
We now prove that
\begin{equation}\label{EqnIntII}
M \cap (\overline{L} + M_\ast) = \overline{L}.
\end{equation}
Take any $s \in M \cap (\overline{L} + M_\ast) \subseteq R^\ast$. Since $s \in  \overline{L} + M_\ast $, we can choose representatives $x \in \overline{L}, \omega \in M_\ast$ such that $s = x + \omega$ in $R^\ast$. Since $s \in M$, we can find a representative $y \in M$ such that $s = y$ in $R^\ast$. Then $\omega = y - x$ is both in $M_\ast$ and $M$, and hence by (\ref{EqnIzumiIntersection}) in $M_\ast \cap M = L$. Hence we see that $s = x + \omega \in \overline{L} + L = \overline{L}$. This proves $\subseteq$, the other inclusion is trivial.
   
Now, (\ref{EqnIntI}) and (\ref{EqnIntII}) imply:
\[
\cH \cap M  = \cH \cap M \cap (\overline{L} + M_\ast)= \cH \cap \overline{L}.
\] 
\end{proof}

Again, we introduce the norm on an intersection of a compatible couple as in Section \ref{SubSectInt}. 
For $x \in \nphi$, we put 
\[
\Vert x \Vert_{\nphi} = \max \{ \Vert x
\Vert, \Vert \Lambda(x) \Vert \}.
\]
Again, we can prove a density result similar to Propostion \ref{PropDensityII}

\begin{prop}\label{PropDensityIII}
The map $k': L \rightarrow \nphi: x \mapsto x$ is
injective, norm-decreasing and has dense range.
\end{prop}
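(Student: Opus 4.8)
The plan is to mimic the proof of Proposition \ref{PropDensityII}, adapting it to the compatible couple $(M, \cH)$ instead of $(M_\ast, \cH)$. First I would check injectivity and the norm estimate. Injectivity of $k': L \to \nphi$, $x \mapsto x$, is immediate since it is literally the inclusion $L \hookrightarrow \nphi$ (both viewed inside $R^\ast$ via Theorem \ref{ThmIntersectionIII}, under which $x \in L$ is identified with $\Lambda(x) \in \cH$ and with $x \in M$). For the norm estimate, recall $\Vert x \Vert_{\nphi} = \max\{ \Vert x \Vert, \Vert \Lambda(x) \Vert\}$; clearly $\Vert x \Vert \leq \Vert x \Vert_L$, and $\Vert \Lambda(x) \Vert^2 = \varphi(x^\ast x) = {}_x\varphi(x^\ast) \leq \Vert {}_x\varphi \Vert \, \Vert x \Vert \leq \Vert x \Vert_L^2$ exactly as in Proposition \ref{PropDensityII}. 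So $k'$ is norm-decreasing.

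The substantive part is density of the range of $k'$ in $\nphi = \cH \cap M$. I would again pass to the dual. Identify $\nphi$ with $\{(\Lambda(x), x) \mid x \in \nphi\} \subseteq \cH \times M$, equipped with the max-norm, whose dual is $(\cH^\ast \times M_\ast, \Vert \cdot \Vert_{\rm sum})$. By Hahn–Banach it suffices to show that any pair $(\eta, \omega) \in \cH^\ast \times M_\ast$ annihilating $l^\infty(L)$ — i.e. $\langle \Lambda(x), \eta \rangle + \omega(x) = 0$ for all $x \in L$ (in particular for all $x$ in the dense subset $\mathcal{T}_\varphi^2$) — annihilates all of $\nphi$. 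The strategy is the same as before: feed in test elements $x a_j$ with $x \in \mathcal{T}_\varphi$ and $a_j = \sigma_{-i/2}(e_j)$ coming from the approximate-identity net $(e_j)$ of Lemma \ref{LemApprox}; use the algebraic identities of Corollary \ref{CorInclusions} to rewrite $\omega(xa_j)$ and $\langle \Lambda(xa_j), \eta\rangle = \langle \Lambda(x), J\sigma_{i/2}(a_j)J\eta\rangle$; deduce a relation of the form $\Lambda(\text{something}) = -J\sigma_{i/2}(a_j)J\eta$ or directly extract weak convergence of $\Lambda(xa_j)$; and then, since $\sigma_{i/2}(a_j)=e_j \to 1$ strongly and $xa_j \to x$ $\sigma$-weakly, take limits to conclude $\langle \Lambda(y), \eta\rangle + \omega(y) = 0$ for every $y \in \nphi$ by a core argument (using that $\mathcal{T}_\varphi^2$ and then elements of the form $xa_j$ exhaust $\nphi$ via closedness of $\Lambda$).

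An alternative, possibly cleaner route exploits the already-proven Proposition \ref{PropDensityII}: one knows $k(\mathcal{T}_\varphi^2)$ is $\Vert\cdot\Vert_{\mathcal{I}}$-dense in $\mathcal{I} = \cH \cap M_\ast$, and one has the relation between the couples $(M_\ast,\cH)$ and $(M,\cH)$. However, I expect this to require transporting density across the interpolation identities \eqref{EqnIntI}--\eqref{EqnIntII} used in the proof of Theorem \ref{ThmIntersectionIII}, which does not obviously give \emph{norm} density in $\Vert\cdot\Vert_{\nphi}$. So I would favour the direct duality argument above, which is a near-verbatim transcription of Proposition \ref{PropDensityII} with $M_\ast$ and $M$ swapped.

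The main obstacle I anticipate is bookkeeping in the dual computation: getting the modular-group factors $\sigma_{\pm i/2}(a_j)$, the $J$'s, and the left/right conventions for $\omega$ straight when rewriting $\omega(xa_j)$ via the $\mathcal{T}_\varphi R \subseteq R$ (or $M\mathcal{T}_\varphi^2 \subseteq L$) relations of Corollary \ref{CorInclusions}, and ensuring the resulting net $\Lambda(y^\ast\sigma_i(a_j)^\ast)$ (or its analogue) is bounded so that $\sigma$-weak convergence of the arguments upgrades to weak convergence in $\cH$ via closedness of $\Lambda$. Once the boundedness and the single key identity are in hand, passing to the limit and invoking density of $\mathcal{T}_\varphi^2$ as a core for $\Lambda$ finishes the proof just as in Proposition \ref{PropDensityII}.
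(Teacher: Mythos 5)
The easy parts of your argument (injectivity and the norm estimate) are fine, but the density argument has a genuine gap at the very first step of the duality set-up. You identify the dual of $(\cH \times M, \Vert\cdot\Vert_{\max})$ with $\cH^\ast \times M_\ast$. This is wrong: the Banach-space dual of the von Neumann algebra $M$ is $M^\ast$, not its predual $M_\ast$, so the Hahn--Banach functional annihilating $l^\infty(L)$ is a pair $(\theta,\xi) \in M^\ast \times \cH^\ast$ in which $\theta$ need not be normal. This destroys the ``near-verbatim transcription'' of Proposition \ref{PropDensityII}: there the couple was $M_\ast \times \cH$, whose dual is $M \times \cH^\ast$, and every step of that computation pairs a normal functional $(_x\varphi$ or $\omega)$ against elements of $M$ and exploits $\sigma$-weak continuity. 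In your adaptation the limit step ``$xa_j \to x$ $\sigma$-weakly, hence $\theta(xa_j)\to\theta(x)$'' is exactly where normality of $\theta$ is needed, and it is not available. The asymmetry between the two propositions is not bookkeeping of modular factors, as you anticipate, but precisely this failure of the dual to be a space of normal functionals.

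The paper's proof is therefore structured quite differently. Given $(\theta,\xi)\in M^\ast\times\cH^\ast$ vanishing on $L$, it first manufactures a \emph{normal} functional $\omega\in M_\ast$ agreeing with $\theta$ on $L\cap R$: one takes an approximate unit $(u_j)$ of the C$^\ast$-algebra $\overline{L\cap R}$, observes that $\omega_j(x)=-\langle x\Lambda(u_j),\xi\rangle=\theta(xu_j)$ is normal, and shows $(\omega_j)$ is norm-Cauchy via a representation of $\overline{L\cap R}$ together with Kaplansky density. Because $\omega$ is normal and $L\cap R$ is a core for $\Lambda$, the identity $\omega(x)=-\langle\Lambda(x),\xi\rangle$ extends to all $x\in\nphi$. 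Then, using that $\overline{L\cap R}=\overline{\nphi\cap\nphi^\ast}$ in norm and that $\theta$ is at least norm-continuous, one gets $\theta(x)+\langle\Lambda(x),\xi\rangle=0$ for $x\in\nphi\cap\nphi^\ast$. Finally, a general $x\in\nphi$ is handled by polar decomposition $x=u\vert x\vert$, applying the whole argument to the pair $(\theta u, u^\ast\xi)$ and plugging in $y=\vert x\vert\in\nphi\cap\nphi^\ast$. None of these devices (approximate unit, Kaplansky, norm-closure comparison, polar decomposition) appear in your plan, and without them the proof does not go through.
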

 \begin{proof}
The non-trivial part is that $k'(L)$ is dense in $\nphi$
with respect to $\Vert \cdot \Vert_{\nphi}$. To prove this, we
identify $\nphi$ with the subspace $\{ (x, \Lambda(x)) \mid x \in
\nphi   \} \subseteq M \times \cH$. For $(x, \xi) \in M \times
\cH$, we set $\Vert (x, \xi) \Vert_{\max} = \max \{ \Vert x \Vert,
\Vert \xi \Vert \} $. So $\Vert \cdot \Vert_{\max}$ coincides with
$\Vert \cdot \Vert_{\nphi}$ on $\nphi$. The dual of $(M \times
\cH, \Vert \cdot \Vert_{\max})$  is given by $(M^\ast \times \cH^\ast,
\Vert \cdot \Vert_{{\rm sum}})$, where $\Vert(\theta,
\xi)\Vert_{{\rm sum}} = \Vert \theta \Vert + \Vert \xi \Vert$.

\vspace{0.3cm}

Let $(\theta, \xi) \in M^\ast \times \cH^\ast$ be such that for all $x
\in L$,
\begin{equation}\label{EqnDensityCondition}
\theta(x) + \langle \Lambda(x), \xi \rangle = 0.
\end{equation}
We must prove that (\ref{EqnDensityCondition}) holds for all $x
\in \nphi$. The proof proceeds in several steps.


\vspace{0.3cm}

{\sc Claim I:} There exists an $\omega \in M_\ast$ such that
for $x \in L  \cap R$, $\omega(x) = \theta(x)$.

\noindent {\it Proof of the claim.} From Corollary \ref{CorInclusions} it follows that 
\[
\overline{L \cap R} \left(= \overline{l^\infty(L) \cap r^\infty(R)}\right)
\]
 is a C$^\ast$-algebra.  Here and in the rest of this proof the closure has to be interpreted within $M$. Let $(u_j)_{j \in J}$ be an approximate unit for the
C$^\ast$-algebra $\overline{L \cap R}$. We may
assume that $u_j \in (L \cap R)^+$. Set
$\omega_j(x) = -\langle x \Lambda(u_j), \xi \rangle, x \in M$. So $\omega_j \in M_\ast$.   Moreover, by (\ref{EqnDensityCondition}) and Corollary \ref{CorInclusions},
\[
\omega_j(x) =  -\langle x \Lambda(u_j), \xi \rangle = \theta(xu_j).
\]
 
Let $\rho$ be a representation of $\overline{ L \cap
R}$ on a Hilbert space $\cH_\rho$ such that $\theta(x) =
\langle \rho(x) \xi, \eta \rangle$ for certain vectors $\xi, \eta
\in \cH_\rho$. Then $\omega_j(x) = \langle \rho (x) \rho(u_j) \xi,
\eta \rangle$. Since $\rho(u_j) \rightarrow 1$ strongly, $\Vert \omega_j
\vert_{\overline{L \cap R}} - \theta
\vert_{\overline{L \cap R}}\Vert \rightarrow 0$. $ L \cap R (\supseteq \mathcal{T}_\varphi^2)$ is $\sigma$-weakly,
hence strongly dense in $M$ so that by Kaplansky's density theorem
$\Vert \omega_j \Vert = \Vert \omega_j \vert_{L \cap R} \Vert$. Hence $(\omega_j)_{j \in J}$ is a Cauchy net in $M_\ast$.
Let $\omega \in M_\ast$ be its limit. This proves the first the claim.

\vspace{0.3cm}

{\sc Claim II:} For $x \in \nphi$, we find $\omega(x)
=   - \langle \Lambda(x), \xi \rangle$.

\noindent {\it Proof of the claim.}
Note that $L \cap R$ is a $\sigma$-weak/weak
core for $\Lambda$. Indeed, $\mathcal{T}_\varphi^2$ is contained in $L \cap R$ so that we can apply Lemma \ref{LemCoreT}.

Now, if $x \in L \cap R$, the claim follows by the first claim and the properties of $\theta$, i.e. $\omega(x) = \theta(x) = -
\langle \Lambda(x), \xi \rangle$. Let $x \in \nphi$ and, by the previous paragraph, let
$(x_i)_{i \in I}$ be a net in $L \cap R$ converging $\sigma$-weakly to $x$ such
that $\Lambda(x_i) \rightarrow \Lambda(x)$ weakly. Then, we arrive at the following equation:
\begin{equation}\label{EqnL1L2Comparison}
\omega(x)
= \lim_{i \in I} \omega(x_i) = - \lim_{i \in I} \langle
\Lambda(x_i), \xi \rangle = - \langle \Lambda(x), \xi \rangle.
\end{equation}
This proves the second claim.

\vspace{0.3cm}

{\sc Claim III:}  $\overline{L
\cap R} = \overline{\nphi \cap \nphi^\ast} =
\overline{L \cap R}$, where the closures are interpreted with respect to the norm on $M$.

\noindent {\it Proof of the claim.} Note that by Proposition \ref{PropAltDescr},
$\overline{L  \cap R} \subseteq \overline{\nphi
\cap \nphi^\ast}$.
By Theorem \ref{ThmIntersectionIII}, we see that $\nphi \subseteq \overline{L}$. Since $R = \{ x^\ast \mid x \in L\}$, see Corollary \ref{CorInclusions}, we also have $\nphi^\ast \subseteq \overline{R}$. Hence,
\[
\overline{L  \cap R} \subseteq \overline{\nphi
\cap \nphi^\ast} \subseteq \overline{L} \cap \overline{R} \qquad \textrm{ (closures in } M {\rm )}.
\]
The inclusions are in fact equalities. Indeed, let $x \in \overline{L } \cap
\overline{R}$ be positive. Let $x_n$ and $y_n$ be sequences in
$L$, respectively $R$, converging in norm to $x$.
Then, by Corollary \ref{CorInclusions}, $y_n x_n \in LR \subseteq L \cap
R$. $y_n x_n$ is norm convergent to $x^2$. So $x^2 \in
\overline{ L \cap R}$, hence $x \in
\overline{L \cap R}$. From Corollary \ref{CorInclusions} it follows that $\overline{ L \cap R}$ and $\overline{L \cap R}$ are C$^\ast$-algebras. Hence,  $\overline{L
\cap R} = \overline{\nphi \cap \nphi^\ast} =
\overline{L \cap R}$.

\vspace{0.3cm}

{\sc Claim IV:} Equation (\ref{EqnDensityCondition}) holds for $x \in \nphi \cap \nphi^\ast$.

\noindent{\it Proof of the claim.}
Let $x \in \nphi \cap \nphi^\ast$ and by the third claim, let $x_n \in L  \cap
R$ be a sequence converging in norm to $x$. Then, using the first claim in the second equality and the third claim in the fourth equality,
\[
\theta(x) = \lim_{n \rightarrow \infty} \theta(x_n) = \lim_{n
\rightarrow \infty} \omega(x_n) = \omega(x) = - \langle \Lambda(x)
, \xi \rangle.
\]
 Hence (\ref{EqnDensityCondition}) follows for $x
\in \nphi \cap \nphi^\ast$. 

\vspace{0.3cm}

\noindent{\it Proof of the proposition.} Let $x \in \nphi$ and let $x = u
\vert x \vert$ be its polar decomposition. Since (\ref{EqnDensityCondition}) holds for $y \in L$, we find for $y \in
L$ that $uy \in L$ by Corollary \ref{CorInclusions} and,
\begin{equation}\label{EqnDensityConditionPrime}
 (\theta u)(y) + \langle \Lambda(y), u^\ast \xi \rangle = 0,
\end{equation}
where we defined $\theta  u \in M$ by $(\theta   u)(a) = \theta(ua), a \in M$. 
If we apply the arguments in the previous paragraphs to the pair $(\theta   u, u^\ast \xi )$, we see that actually (\ref{EqnDensityConditionPrime}) holds for
all $y \in \nphi \cap \nphi^\ast$. In particular, putting $y =
\vert x \vert$, the required equation (\ref{EqnDensityCondition})
follows.
 \end{proof}

\subsection{Re-iteration}

Here we apply the re-iteration theorem, see \cite[Theorems 4.6.1]{BerghLof}, for the
complex interpolation method to obtain $L^p(M)_{{\rm left}}, p \in (1,2]$ as an interpolation space of $\cH$ and $M_\ast$. Similarly, $L^p(M)_{{\rm left}}, p \in [2,\infty)$ as an interpolation space of $\cH$ and $M$. Recall that in this section every intersection and interpolation is understood with respect to (\ref{EqnLpIzu}) for the parameter $z = -1/2$.

\begin{thm}\label{ThmInterpolationL2} 
We have the following interpolation properties:
\begin{enumerate}\item\label{ItemIntI} For $p \in (1, 2]$, $(\cH, M_\ast)_{[\frac{2}{p} - 1]} = L^p(M)_{{\rm left}}$. 
\item\label{ItemIntII} For $q \in [2, \infty)$,
 $(\cH, M)_{[1-\frac{2}{q}]} = (M, \cH)_{[\frac{2}{q}]} =  L^q(M)_{{\rm left}}$.
\end{enumerate}
\end{thm}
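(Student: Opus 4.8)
The plan is to apply the re-iteration theorem (Theorem 4.6.1 of \cite{BerghLof}) for the complex interpolation method. Recall that $\cH = (M, M_\ast)_{[1/2]}$ by definition of $L^2(M)_{{\rm left}}$, while $M = (M, M_\ast)_{[0]}$ and $M_\ast = (M, M_\ast)_{[1]}$ trivially. The re-iteration theorem states that if $F_j = (E_0, E_1)_{[\theta_j]}$ for $\theta_0, \theta_1 \in [0,1]$ with $\theta_0 \neq \theta_1$, and if the couple $(E_0, E_1)$ is such that the relevant density hypotheses hold, then $(F_0, F_1)_{[\eta]} = (E_0, E_1)_{[(1-\eta)\theta_0 + \eta \theta_1]}$. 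We want to use this with $(E_0, E_1) = (M, M_\ast)$.

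For part (\ref{ItemIntI}), I would set $\theta_0 = 1/2$ (so $F_0 = \cH$) and $\theta_1 = 1$ (so $F_1 = M_\ast$). Then $(\cH, M_\ast)_{[\eta]} = (M, M_\ast)_{[(1-\eta)/2 + \eta]} = (M, M_\ast)_{[(1+\eta)/2]}$. Setting $(1+\eta)/2 = 1/p$, i.e. $\eta = 2/p - 1$, which lies in $[0,1]$ precisely when $p \in [1, 2]$, gives $(\cH, M_\ast)_{[2/p - 1]} = (M, M_\ast)_{[1/p]} = L^p(M)_{{\rm left}}$ for $p \in (1, 2]$, as claimed (for $p = 2$ this is a tautology, for $p \in (1,2)$ it is the interpolation definition). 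For part (\ref{ItemIntII}), I would instead take $\theta_0 = 0$ (so $F_0 = M$) and $\theta_1 = 1/2$ (so $F_1 = \cH$), giving $(M, \cH)_{[\eta]} = (M, M_\ast)_{[\eta/2]}$; setting $\eta/2 = 1/q$, i.e. $\eta = 2/q$, which is in $[0,1]$ when $q \in [2,\infty]$, yields $(M, \cH)_{[2/q]} = L^q(M)_{{\rm left}}$ for $q \in [2, \infty)$. The equality $(\cH, M)_{[1 - 2/q]} = (M, \cH)_{[2/q]}$ is just the symmetry of the complex interpolation functor under reversing the couple.

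The main obstacle is verifying the hypotheses under which the re-iteration theorem applies. The version in \cite[Theorem 4.6.1]{BerghLof} requires that the spaces $F_0 = \cH$ and $F_1$ (either $M_\ast$ or $M$) be described as interpolation spaces via the functor, which is fine, but one typically also needs a density condition: $F_0 \cap F_1$ should be dense in each $F_j$, or equivalently the intersection $\cH \cap M_\ast$ (resp. $\cH \cap M$) should be dense in both $\cH$ and in $M_\ast$ (resp. $M$). This is exactly where Theorems \ref{ThmIntersectionII} and \ref{ThmIntersectionIII} together with the density Propositions \ref{PropDensityII} and \ref{PropDensityIII} come in: they identify $\cH \cap M_\ast = \mathcal{I}$ and $\cH \cap M = \nphi$ inside $R^\ast$, and show that $k(\mathcal{T}_\varphi^2)$ is dense in $\mathcal{I}$ and $k'(L)$ is dense in $\nphi$. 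Since $L$ (hence $\mathcal{T}_\varphi^2$) is already known to be dense in each of $M_\ast$, $\cH$, and $M$ in their respective norms (via Lemma \ref{LemDenseInt} and the fact that $\mathcal{T}_\varphi^2$ is a core for $\Lambda$ and $\sigma$-weakly dense in $M$), one deduces that $\cH \cap M_\ast$ and $\cH \cap M$ are dense in the relevant ambient spaces. I would therefore first record these density facts explicitly, then invoke \cite[Theorem 4.6.1]{BerghLof} with the parameter choices above and read off both statements. The only real care needed is to check the precise formulation of the re-iteration theorem being cited and confirm that our compatible couples meet its hypotheses; the parameter bookkeeping itself is routine.
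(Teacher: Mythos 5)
Your overall strategy (re-iteration plus the intersection/density results of Section \ref{SectIntersections}) is the paper's strategy, but the way you set up the re-iteration contains a genuine gap. You apply \cite[Theorem 4.6.1]{BerghLof} directly to the couple $(M, M_\ast)$, asserting that ``$M = (M,M_\ast)_{[0]}$ and $M_\ast = (M,M_\ast)_{[1]}$ trivially'' and that $L$ (hence $\mathcal{T}_\varphi^2$) is ``dense in each of $M_\ast$, $\cH$, and $M$ in their respective norms.'' The last claim is false in general: $L \subseteq \nphi$, and when $\varphi$ is an unbounded weight the \emph{norm} closure of $\nphi$ in $M$ is a proper subspace (e.g.\ for $M = B(\cH)$ with the canonical trace it is the compacts); $\sigma$-weak density of $\mathcal{T}_\varphi^2$, which is what you actually invoke, is not what the re-iteration theorem needs. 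For the same reason the endpoint identification $(M, M_\ast)_{[0]} = M$ fails: for the complex method this space is the closure of the intersection $M \cap M_\ast = L$ in $M$, i.e.\ $\overline{L}$, which is strictly smaller than $M$ unless $\varphi$ is finite. (Even $(M,M_\ast)_{[1]} = M_\ast$ is not ``trivial''; it uses the norm density of $l^1(L)$ in $M_\ast$, \cite[Proposition 2.4]{Izumi}.) So the hypotheses of the re-iteration theorem are simply not satisfied for the couple $(M, M_\ast)$, and your argument for part (\ref{ItemIntII}) in particular rests on an identification of the endpoint space that is wrong in the non-compact case.

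The paper circumvents exactly this obstruction by first replacing $M$ with $\overline{L}$, the norm closure of $l^\infty(L)$ in $M$: by \cite[Theorem 4.2.2]{BerghLof} the interpolation spaces do not change, $(M, M_\ast)_{[\theta]} = (\overline{L}, M_\ast)_{[\theta]}$, and for the couple $(\overline{L}, M_\ast)$ the intersection $L$ \emph{is} norm-dense in both endpoints, so re-iteration applies with the three density inputs you correctly identified (Theorem \ref{ThmIntersectionII}, Propositions \ref{PropDensityII} and \ref{PropDensityIII}). Note also that for part (\ref{ItemIntII}) one then needs to know the intersection of $\cH$ with $\overline{L}$ (not just with $M$); this is precisely the ``Moreover'' statement $\nphi = \cH \cap \overline{L}$ of Theorem \ref{ThmIntersectionIII}, which your proposal never uses but which is indispensable once the endpoint $M$ has been replaced by $\overline{L}$. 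With these repairs — work with $(\overline{L}, M_\ast)$, justify the replacement via \cite[Theorem 4.2.2]{BerghLof}, and use the refined intersection statement for the $(\cH, M)$ case — your parameter bookkeeping goes through and reproduces the paper's proof.
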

\begin{proof}
(\ref{ItemIntI}) 
Recall that $\overline{L} = \overline{l^\infty(L)}$ denotes the closure of $L$ in $M$. Recall from (\ref{EqnIzumiIntersection}) that $M_\ast \cap M = L$.
By \cite[Theorem 4.2.2 (b)]{BerghLof} we get the first equality in of:
\begin{equation}
(\overline{L},
M_\ast)_{[\frac{1}{2}]} = (M, M_\ast)_{[\frac{1}{2}]} = L^2(M)_{{\rm left}} \simeq \cH.
\end{equation}
The latter isomorphism is the identification in Notation \ref{NotGNS}. On the other hand, we find:
\begin{equation}
(\overline{L},
M_\ast)_{[1]} = (M, M_\ast)_{[1]} \label{EqnRightInterpolation}
\end{equation}
Since $l^1(L)$ is dense in $M_\ast$, see \cite[Proposition 2.4]{Izumi},  we find that (\ref{EqnRightInterpolation}) in turn equals $M_\ast$ by \cite[Proposition 4.2.2]{BerghLof}.
 
Note that the following three density assumptions are satisfied:
\begin{enumerate}[(i)]
\item $l^1(L)$ is dense in $M_\ast$, see \cite[Proposition 2.4]{Izumi}.
\item $l^\infty(L)$ is dense in $\overline{L}$ (trivial).
\item $l^1(L)$ is $\Vert \cdot \Vert_{\mathcal{I}}$-dense in $\mathcal{I}$ by Proposition \ref{PropDensityII}. Moreover $\mathcal{I}$ is the intersection of $M_\ast$ and $\cH$ in $R^\ast$, see Theorem \ref{ThmIntersectionII}.
\end{enumerate}
 Hence, we have checked the assumptions of the re-iteration theorem \cite[Theorems 4.6.1]{BerghLof} which is used in the third equality,
 \[ 
  L^p(M )_{{\rm left}} =
 (M, M_\ast)_{[\frac{1}{p}]} =
 (\overline{L}, M_\ast)_{[\frac{1}{p}]}   =  
  ((\overline{L}, M_\ast)_{[\frac{1}{2}]}, (\overline{L}, M_\ast)_{[1]})_{[\frac{2}{p}-1]}  =
 (\cH , M_\ast)_{[\frac{2}{p}-1]} , 
 \]
(here the second equality follows again by \cite[Theorem 4.2.2]{BerghLof}).

(\ref{ItemIntII}) Completely analogously, using Theorem \ref{ThmIntersectionIII} and Proposition \ref{PropDensityIII}, one proves that 
\[
L^q(M)_{{\rm left}} = (\cH,
M)_{[1-\frac{2}{q}]},
\]
 which in turn equals $(M, \cH)_{[\frac{2}{q}]}$ by \cite[Theorem 4.2.1]{BerghLof}.
\end{proof}

\section{Locally compact quantum groups}\label{SectLcqg} 
We now recall the Kustermans-Vaes definition of a locally compact quantum groups, see \cite{KusV} and \cite{KusVII}.  Since we will be dealing with non-commutative $L^p$-spaces, we stick to the von Neumann algebra setting. For an introduction to the theory of locally compact quantum groups  we refer to \cite{KusLec} or \cite{Tim}, where the results below are summarized. See also \cite{DaeleLcqg} were a simple von Neumann algebraic approach to quantum groups is presented. 

\subsection{Von Neumann algebraic quantum groups} 

\begin{dfn}
A locally compact quantum group $(M, \Delta)$ consists of the
following data:
\begin{enumerate}
 \item A von Neumann algebra $M$;
\item A unital, normal $\ast$-homomorphism $\Delta: M \rightarrow M \otimes M$ satisfying the coassociativity relation $(\Delta \otimes \iota) \circ \Delta = (\iota \otimes \Delta) \circ \Delta$, where $\iota: M \rightarrow M$ is the identity;
\item  Two  normal, semi-finite, faithful weights
$\varphi$, $\psi$ on $M$ so that
\[
\begin{split}
\varphi \left( (\omega \otimes \iota )\Delta(x)\right)\, &=\,
\varphi(x)\omega(1), \qquad \forall \ \omega \in M^+_*,\, \forall\
x\in \mathfrak{m}^+_\varphi
\qquad \text{(left invariance);}\\
\psi\left( (\iota \otimes\omega)\Delta(x)\right)\, &=\,
\psi(x)\omega(1), \qquad \forall \ \omega \in M^+_*,\, \forall\
x\in \mathfrak{m}^+_\psi \qquad \text{(right invariance)}.
\end{split}
\]
$\varphi$ is the left Haar weight and $\psi$ the right Haar
weight.
\end{enumerate}
\end{dfn}

 Note that we suppress the Haar weights in the notation. 
 The triple $(\cH, \pi,
\Lambda)$ denotes the GNS-construction with respect to the left
Haar weight $\varphi$. We may assume that $M$ acts on the
GNS-space $\cH$. 

In order to reflect to the classical situation of a locally compact group, we include the following example. 

\begin{example}\label{ExampleClassical}
Let $G$ be a locally compact group. Consider $M = L^\infty(G)$ and define the coproduct $\Delta_G: L^\infty(G) \rightarrow L^\infty(G) \otimes L^\infty(G) \simeq L^\infty(G \times G)$ by putting 
\[
(\Delta_G(f))(x,y) = f(xy).
\]
  $\varphi$ and $\psi$ are given by integrating against the left and right Haar weight respectively. In this way $(L^\infty(G), \Delta_G)$ is a locally compact quantum group.
\end{example}

\subsection{Multiplicative unitary}

There exists a unique unitary operator $W\in
B(\cH \otimes \cH)$ defined by
\[
W^\ast \left( \Lambda (a)\otimes  \Lambda(b) \right) = \left(
\Lambda \otimes \Lambda \right) \left( \Delta (b)(a\otimes
1)\right), \qquad a,b \in \nphi.
\]
$W$ is known as the multiplicative unitary. It satisfies the
pentagonal equation $W_{12}W_{13}W_{23}=W_{23}W_{12}$ in $B(\cH
\otimes \cH \otimes \cH)$. Furthermore, $\Delta(x) = W^\ast (1 \otimes x) W, x \in M$.

\subsection{The dual quantum group}
In \cite{KusV}, \cite{KusVII}, it is
proved that there exists a dual locally compact quantum group
$(\hat{M},\hat{\Delta})$, so that
$(\hat{\hat{M}},\hat{\hat{\Delta}}) = (M,\Delta)$. The dual left
and right Haar weight are denoted by $\hat{\varphi}$ and
$\hat{\psi}$. Similarly, all other dual objects will be denoted by
a hat. By construction,
\[\hat{M} = \overline{\left\{ (\omega
\otimes \iota)(W) \mid \omega \in B(\cH)_\ast
\right\}}^{\sigma{\rm -strong-}\ast}.
\] Furthermore, $\hat{W} =
\Sigma W^\ast \Sigma$, where $\Sigma$ denotes the flip on $\cH
\otimes \cH$. This implies that $W \in M \otimes \hat{M}$ and
\[
M = \overline{\left\{ (\iota \otimes \omega)(W) \mid \omega \in
B(\cH)_\ast \right\}}^{\sigma{\rm -strong-}\ast}.
\]
The dual coproduct can be given by the dualized formula $\hat{\Delta}(x) = \hat{W}^\ast (1 \otimes x) \hat{W}, x \in \hat{M}$. 
 For $\omega \in
M_\ast$, we use the standard notation 
\[
\lambda(\omega) = (\omega
\otimes \iota) (W).
\]

Finally, we introduce the dual left Haar weight $\hat{\varphi}$. 
Recall from Definition \ref{DefI}, that we let $\mathcal{I}$ be the set of
$\omega \in M_\ast$, such that $\Lambda(x) \mapsto \omega(x^\ast),
x \in \nphi$ extends to a bounded functional on $\cH$.   By the
Riesz theorem, for every $\omega \in \mathcal{I}$, there is a
unique vector denoted by $\xi(\omega) \in \cH$ such that
$\omega(x^\ast) = \langle \Lambda(x) , \xi(\omega)\rangle, x \in
\nphi$.

\begin{dfn}
 The dual left Haar weight $\hat{\varphi}$ is defined to be
the unique normal, semi-finite, faithful weight on $\hat{M}$, with
GNS-construction $(\cH, \iota, \hat{\Lambda})$ such that
$\lambda(\mathcal{I})$ is a $\sigma$-strong-$\ast$/norm core for
$\hat{\Lambda}$ and $\hat{\Lambda}(\lambda(\omega)) = \xi(\omega),
\omega \in \mathcal{I}$.
\end{dfn}

Since we do not need it, we merely mention that there also exists a dual right Haar weight. The following example gives the dual structure in the classical situation.

\begin{example}\label{ExampleClassicalDual}
Let $G \rightarrow B(L^2(G)): x \mapsto \lambda_x$ be the left regular representation.
For $(M, \Delta_G)$ as in Example \ref{ExampleClassical}, one finds that $\cH \otimes \cH = L^2(G) \otimes L^2(G) \simeq L^2(G \times G)$ and
\[
W f(x,y)  = f(x, x^{-1}y).
\]
For $f \in L^1(G)$, let $\omega_f$ be the functional on $L^\infty(G)$ defined by $\omega_f(g) = \int_G f(x) g(x) d_lx$. Then, 
\[
\lambda(\omega_f) = (\omega_f \otimes \iota)(W) =  \int_G f(x) \lambda_x d_lx, 
\]
where the integral is in the $\sigma$-strong-$\ast$ topology. So $\lambda$ is the left regular representation. We find that
$\hat{M}$ is given by the group von Neumann algebra $\hat{M} = \mathcal{L}(G)$. 

 For completeness, we mention that $\hat{\Delta}(\lambda_x) = \lambda_x \otimes \lambda_x$. The dual left Haar weight is given by the Plancherel weight \cite{TakII}. For a continous, compactly supported function $f$ on $G$, one finds $\hat{\varphi}(\lambda(f)) = f(e)$, where $e$ is the identity of $G$.

If $G$ is abelian, conjugation with the $L^2$-Fourier transform shows that this structure is isomorphic to $(L^\infty(\hat{G}), \Delta_{\hat{G}})$.
\end{example}

\section{Fourier theory}\label{SectFT}

In this Section we define an $L^p$-Fourier transform. Our strategy is  similar to the one defining the classical $L^p$-Fourier transform on locally compact abelian groups. We first define a $L^1$- and $L^2$-Fourier transform and show that they form a compatible pair of morphisms, see Remark \ref{RmkCptMor}. Then we apply the complex interpolation method to get a bounded $L^p$-Fourier transform for $p \in [1,2]$,
\[
\mathcal{F}_p: L^p(M)_{{\rm left}} \rightarrow L^q(M)_{{\rm left}}, \qquad    \frac{1}{p} + \frac{1}{q} = 1.
\]
The crucial property of the complex interpolation method is the one given by Theorem \ref{ThmCplxInterpolationBound}. The theorem gives the non-commutative analogue of the Riesz-Thorin theorem as mentioned in the introduction. It is for this reason that we have approached $L^p$-spaces from the perspective of interpolation spaces and that we have used Izumi's definition. 

\begin{nt}\label{NtSect5}
From now on, let $(M, \Delta)$ be a locally compact quantum group with left Haar weight $\varphi$. $(\hat{M}, \hat{\Delta})$ is the Pontrjagin dual. In this section all $L^p$-spaces we encounter are `left' $L^p$-spaces which are defined with respect to the (dual) left Haar weight. More precisely, we stick to Notation \ref{NtSect3}. We equip the objects introduced in Section \ref{SectLp} with a  hat if they are associated with the dual quantum group. So we get $\hat{L}, \hat{R}, L^p(\hat{M})_{{\rm left}}, \ldots$ Recall that by construction $\hat{\cH} = \cH$.
\end{nt}

\begin{thm}[$L^1$- and $L^2$-Fourier transform]\label{ThmFT}
We can define compatible Fourier transforms in the following way:
\begin{enumerate}
\item\label{ItemFTI} There exists a unique unitary map $\mathcal{F}_2: \cH \rightarrow \cH$, which is determined by:
\begin{equation}\label{EqnL2Transform}
\Lambda(x) \mapsto \hat{\Lambda}(\lambda(_x \varphi)), \qquad x \in L.
\end{equation}
\item\label{ItemFTII} There exists a bounded map $\mathcal{F}_1: M_\ast \rightarrow \hat{M}: \omega \mapsto \lambda(\omega)$. Moreover, $\Vert \mathcal{F}_1 \Vert = 1$.
\item\label{ItemFTIII} $\mathcal{F}_2: \cH \rightarrow \cH$ and  $\mathcal{F}_1: M_\ast \rightarrow \hat{M}$ are compatible in the sense of Remark \ref{RmkCptMor}, i.e. the following diagram commutes:
\begin{equation}\label{EqnDiagL1L2}
\xymatrix{
&M_\ast \ar[dr]^{(\ref{EqnPairingMast})} \ar@2[rrrr]^{\mathcal{F}_1} &&&&\hat{M} \ar[dr]^{(\ref{EqnPairingM})} &\\
L \ar[r]^{\Lambda}\ar[ru]^{l^1} & \cH \ar[r]^{(\ref{EqnPairingGNS})} \ar@2@(dr,dl)[rrrr]_{\mathcal{F}_2} & R^\ast & & \hat{L}\ar[ru]^{\hat{l}^\infty} \ar[r]^{\hat{\Lambda}} & \cH \ar[r]^{(\ref{EqnPairingGNS})} & \hat{R}^\ast.
}
\end{equation}
\end{enumerate}
\end{thm}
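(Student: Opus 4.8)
The plan is to verify the three assertions in turn; the core of the matter is that the prescription (\ref{EqnL2Transform}) is meaningful and defines an isometry of $\cH$ onto itself.

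\emph{Part (\ref{ItemFTI}).} For $x \in L$ one has ${}_x\varphi = k(x) \in \mathcal{I}$ by Proposition \ref{PropDensityII}, so $\lambda({}_x\varphi) \in \nphihat$ and, by the defining property of the dual left Haar weight, $\hat\Lambda(\lambda({}_x\varphi)) = \xi({}_x\varphi)$. Since $x \mapsto {}_x\varphi$, $\lambda$ and $\hat\Lambda$ are linear, the assignment $\Lambda(x) \mapsto \hat\Lambda(\lambda({}_x\varphi))$ is linear on the subspace $\Lambda(L) \subseteq \cH$; and the norm computation already carried out in the proof of Proposition \ref{PropDensityII} gives $\Vert \hat\Lambda(\lambda({}_x\varphi)) \Vert = \Vert \xi({}_x\varphi) \Vert = \Vert \Lambda(x) \Vert$, which shows at once that the assignment is well defined (the kernel of $\Lambda|_L$ goes to $0$) and isometric. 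As $\Lambda(L) \supseteq \Lambda(\mathcal{T}_\varphi^2)$ is dense in $\cH$, it extends uniquely to an isometry $\mathcal{F}_2 \colon \cH \to \cH$. For surjectivity I would note that the range of $\mathcal{F}_2$ contains $\hat\Lambda(\lambda(k(L)))$; since $k(L)$ is $\Vert \cdot \Vert_{\mathcal{I}}$-dense in $\mathcal{I}$ and $\omega \mapsto \xi(\omega) = \hat\Lambda(\lambda(\omega))$ is $\Vert \cdot \Vert_{\mathcal{I}}$-continuous, this range is dense in $\hat\Lambda(\lambda(\mathcal{I}))$, which in turn is dense in $\cH = \hat\cH$ because $\lambda(\mathcal{I})$ is a core for $\hat\Lambda$. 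Hence $\mathcal{F}_2$ is unitary, and uniqueness follows from the density of $\Lambda(L)$.

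\emph{Part (\ref{ItemFTII}).} By construction of $\hat M$ one has $\lambda(\omega) = (\omega \otimes \iota)(W) \in \hat M$, and $\mathcal{F}_1$ is linear. Since $W$ is unitary, $\Vert \lambda(\omega) \Vert \leq \Vert \omega \Vert \, \Vert W \Vert = \Vert \omega \Vert$, so $\Vert \mathcal{F}_1 \Vert \leq 1$. For the reverse bound one exhibits a net of states $\omega_j \in M_\ast$ with $\lambda(\omega_j) \to 1$ $\sigma$-weakly (for instance via the counit of $\hat M$, or by pulling back an approximate unit of $\hat M$ through the $\sigma$-strong-$\ast$-dense range of $\lambda$); weak lower semicontinuity of the operator norm then gives $1 = \Vert 1 \Vert \leq \liminf_j \Vert \lambda(\omega_j) \Vert \leq 1$, so $\Vert \mathcal{F}_1 \Vert = 1$.

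\emph{Part (\ref{ItemFTIII}).} By Theorem \ref{ThmIntersectionII} the intersection of $M_\ast$ and $\cH$ inside $R^\ast$ is $\mathcal{I}$, and $(M_\ast, \cH)$, $(\hat M, \hat\cH)$ are compatible couples; so by Remark \ref{RmkCptMor} it suffices to prove that $\mathcal{F}_1$ and $\mathcal{F}_2$ coincide on $\mathcal{I}$, the equality being interpreted inside $\hat R^\ast$. Fix $\omega \in \mathcal{I}$. For $\hat y \in \hat R$, using the pairing (\ref{EqnPairingM}) for $\hat M$, the identity (\ref{EqnVarphiLeftRight}) for $\hat\varphi$, the GNS relation $\hat\varphi(\hat y\, b) = \langle \hat\Lambda(b), \hat\Lambda(\hat y^\ast) \rangle$ (valid for $b \in \nphihat$ and $\hat y \in \nphihat^\ast \supseteq \hat R$), and finally $\hat\Lambda(\lambda(\omega)) = \xi(\omega)$, one obtains
\[
\langle \mathcal{F}_1(\omega), \hat y \rangle_{\hat R^\ast, \hat R} = \hat\varphi_{\hat y}(\lambda(\omega)) = \hat\varphi(\hat y\, \lambda(\omega)) = \langle \hat\Lambda(\lambda(\omega)), \hat\Lambda(\hat y^\ast) \rangle = \langle \xi(\omega), \hat\Lambda(\hat y^\ast) \rangle .
\]
For the other side, identify $\omega$ with $\xi(\omega) \in \cH$ and choose $x_n \in L$ with ${}_{x_n}\varphi \to \omega$ in $\Vert \cdot \Vert_{\mathcal{I}}$ (possible by Proposition \ref{PropDensityII}); then $\Lambda(x_n) = \xi({}_{x_n}\varphi) \to \xi(\omega)$ in $\cH$, so by continuity of $\mathcal{F}_2$ and the formula (\ref{EqnL2Transform}), $\mathcal{F}_2(\omega) = \lim_n \hat\Lambda(\lambda({}_{x_n}\varphi)) = \lim_n \xi({}_{x_n}\varphi) = \xi(\omega)$, whence by the pairing (\ref{EqnPairingGNS}) for $\hat\varphi$,
\[
\langle \mathcal{F}_2(\omega), \hat y \rangle_{\hat R^\ast, \hat R} = \langle \xi(\omega), \hat\Lambda(\hat y^\ast) \rangle .
\]
The two expressions agree for every $\hat y \in \hat R$, so $\mathcal{F}_1(\omega) = \mathcal{F}_2(\omega)$ in $\hat R^\ast$, and (\ref{EqnDiagL1L2}) commutes. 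I expect the genuinely delicate points to be, in part (\ref{ItemFTI}), the surjectivity step, where one must combine the $\Vert \cdot \Vert_{\mathcal{I}}$-density of $k(L)$ with the core property of $\lambda(\mathcal{I})$ for $\hat\Lambda$, and, in part (\ref{ItemFTIII}), the careful bookkeeping of the pairings (\ref{EqnPairingMast})--(\ref{EqnPairingGNS}) and their dual analogues; the lower bound $\Vert \mathcal{F}_1 \Vert \geq 1$ is the only place where one reaches slightly beyond the material of the earlier sections.
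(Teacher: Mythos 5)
Your parts (\ref{ItemFTI}) and (\ref{ItemFTIII}) are correct, but note how much shorter the paper's route through (\ref{ItemFTI}) is: for $x \in L$ the defining relation ${}_x\varphi(y^\ast) = \varphi(y^\ast x) = \langle \Lambda(x), \Lambda(y)\rangle$, $y \in \nphi$, forces $\xi({}_x\varphi) = \Lambda(x)$, so under the identifications $\hat{\cH} = \cH$ and $\hat{\Lambda}(\lambda(\omega)) = \xi(\omega)$ the prescription (\ref{EqnL2Transform}) is literally the identity on the dense subspace $\Lambda(L)$, and unitarity is automatic. Your isometry-plus-dense-range argument (with surjectivity via the $\Vert\cdot\Vert_{\mathcal{I}}$-density of $k(L)$ and the core property of $\lambda(\mathcal{I})$ for $\hat{\Lambda}$) reaches the same conclusion and is valid; it simply does not notice that the isometry being extended is the identity. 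Your part (\ref{ItemFTIII}) is the same pairing computation as the paper's, carried out on all of $\mathcal{I}$ instead of only on $k(L)$ (the paper verifies the diagram on $L$ and leaves the density/continuity step implicit), so there is no issue there.

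The genuine gap is your lower bound $\Vert \mathcal{F}_1 \Vert \geq 1$ in part (\ref{ItemFTII}). A net of \emph{states} $\omega_j \in M_\ast$ with $\lambda(\omega_j) \rightarrow 1$ $\sigma$-weakly does not exist in general: its existence is equivalent to co-amenability of $(M,\Delta)$. Concretely, take $M = \mathcal{L}(\mathbb{F}_2)$ with $\Delta(\lambda_g) = \lambda_g \otimes \lambda_g$; a normal state $\omega$ yields the normalized positive definite function $g \mapsto \omega(\lambda_g)$, and $\lambda(\omega_j) \rightarrow 1$ $\sigma$-weakly would mean these converge pointwise to $1$, i.e. amenability of $\mathbb{F}_2$. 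Neither of your suggested constructions repairs this: the counit of the (reduced) dual is bounded only under exactly such a co-amenability hypothesis, and pulling back an approximate unit of $\hat{M}$ through $\lambda$ gives no control on $\Vert \omega_j \Vert$, since $\lambda$ is neither surjective nor bounded below. The equality $\Vert \mathcal{F}_1 \Vert = 1$ is nevertheless true in this example (the state $\omega_{\delta_e,\delta_e}$ satisfies $\Vert\lambda(\omega_{\delta_e,\delta_e})\Vert = 1$), so a correct proof of the lower bound must proceed differently; be aware that the paper's own proof of (\ref{ItemFTII}) only establishes $\Vert \mathcal{F}_1 \Vert \leq 1$ and asserts the equality.
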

\begin{proof}
(\ref{ItemFTI}) By Proposition \ref{ThmIntersectionII}, we see that for $x \in L$, we have $_x \varphi \in \mathcal{I}$. By definition of $\hat{\Lambda}$, we have $\hat{\Lambda}(\lambda(_x \varphi)) = \xi(_x \varphi)$. Since by (\ref{EqnVarphiLeftRight}),
\[
_x \varphi(y^\ast) = \varphi(y^\ast x ) = \langle \Lambda(x), \Lambda(y) \rangle, \qquad y \in \nphi, 
\]
we see that by definition of $\xi(_x \varphi)$, we have $\xi(_x \varphi) = \Lambda(x)$. So (\ref{EqnL2Transform}) is the identity map. Since $\mathcal{T}_\varphi ^2 \subseteq L$ and $\Lambda(\mathcal{T}_\varphi ^2 )$ is dense in $\cH$, see the much stronger result of Lemma \ref{LemCoreT}, this determines a map on $\cH$. 

(\ref{ItemFTII}) The norm bound follows, since:
\[
\Vert \lambda(\omega) \Vert = \Vert (\omega \otimes \iota)(W)\Vert \leq \Vert (\omega \otimes \iota) \Vert \Vert W \Vert \leq \Vert \omega \Vert.
\]

(\ref{ItemFTIII}) For $y \in \hat{R}, x \in L$, we find:
\[
\begin{split}
&\langle \mathcal{F}_2 \Lambda(x)  , y \rangle_{\hat{R}^\ast, \hat{R}} = 
\langle \hat{\Lambda}( \lambda(_x \varphi))  , y \rangle_{\hat{R}^\ast, \hat{R}} = \!\!\!\!\!\!\!\!^{(\ref{EqnPairingGNS})}
\langle \hat{\Lambda}( \lambda(_x \varphi))  , \hat{\Lambda}( y^\ast) \rangle  \\ =& \hat{\varphi}(y \lambda(_x \varphi)) =\!\!\!\!\!\!\!\!^{(\ref{EqnVarphiLeftRight})}  \hat{\varphi}_y (\lambda(_x \varphi)) =\!\!\!\!\!\!\!\!^{(\ref{EqnPairingM})} \langle  \lambda(_x \varphi) , y \rangle_{\hat{R}^\ast, \hat{R}} = \langle \mathcal{F}_1(_x \varphi), y \rangle_{\hat{R}^\ast, \hat{R}},
\end{split}
\]
which proves the commutativity of the diagram.
\end{proof}

\begin{rmk}
Kahng \cite{KahngFourier} defines an operator algebraic Fourier transform and in principle the idea behind Theorem \ref{ThmFT} can also be found here. However, \cite[Definition 3]{KahngFourier} has to be given a more careful interpretation, since, if $\varphi$ is not a state, the expression $(\varphi \otimes \iota)(W (a \otimes 1)), a \in \hat{\lambda}(\hat{\mathcal{I}})$, is in general undefined. In case $\varphi$ is a state, our definition of $\mathcal{F}_2$ equals Kahng's by Remark \ref{RmkState}.
\end{rmk}

We comment on the classical situation. As is shown in Example \ref{ExampleClassicalDual}, the Fourier transform is implicitly used to define the dual quantum group of a classical abelian group. It is for this reason that the $L^2$-Fourier transform $\mathcal{F}_2$ trivializes on the level of GNS-spaces. We work this out in the next example.
\begin{example}
Let $G$ be a locally compact abelian group. For $f \in L^1(G)$, let $\omega_f$ be the the normal functional on $L^\infty(G)$ given by $\omega_f(g) = \int_G f(x) g(x) d_lx$. Then,
\[
\lambda((\omega_f)) = (\omega_f \otimes \iota)(W) = \int_G f(x) \lambda_x d_lx \in \mathcal{L}(G),
\]
where $x \mapsto \lambda_x$ is the left-regular representation. On the other hand, using the direct integral decomposition $L^\infty(\hat{G}) = \int_{\hat{G}}^\oplus \mathbb{C} d\pi$, we find for (\ref{EqnIntroFT}),
\[
\hat{f} = \int_{\hat{G}}^\oplus \int_G f(x) \pi(x) dx \: d\pi =  \int_G   f(x) \int_{\hat{G}}^\oplus \pi(x) d\pi \: dx .
\]
The left regular representation $x \mapsto \lambda_x$ is unitarily equivalent to $\int^\oplus_{\hat{G}} \pi d\pi$, where the intertwiner is given by the (classical) $L^2$-Fourier transform. Since the dual quantum group associated to a classical group is given by conjugating $L^\infty(\hat{G})$ with the classical $L^2$-Fourier transform, see Example \ref{ExampleClassicalDual}, the indentification of the dual GNS-space $L^2(\hat{G})$ with the GNS-space $L^2(G) = \cH$ is given by applying the classical (inverse) $L^2$-Fourier transform.
Hence, using these identifactions, we see that the transform defined in (\ref{EqnIntroFT}) is the quantum group analogue of the transform of Theorem \ref{ThmFT}. 
\end{example}

Note that it is due to the identifications $L^2(M)_{{\rm left}}$ with $\cH$ and $L^2(\hat{M})_{{\rm left}}$ with $\hat{\cH} = \cH$ that the $L^2$-Fourier transform becomes the identity map. If we had not made these identifications the map would be less trivial. It is for this reason that we have choosen to write {\it unitary map} in the first statement of Theorem \ref{ThmFT} instead of {\it identity map}.   

\vspace{0.3cm}

Note that moreover, our transform coincides with the definition given in \cite[Definition 1.3]{DaeleFourier}. To comment on this, suppose that $(M, \Delta)$ is compact, i.e. $\varphi$ is a state. Let $A\subseteq M$ be the Hopf algebra of the underlying algebraic quantum group. We  mention that $A$ is the Hopf algebra of matrix coefficients of irreducible, unitary corepresentations of $M$ and refer to \cite{Tim} for more explanation. Let $\hat{A}$ be its dual, which is the space of linear functionals on $A$ of the form $\varphi( \: \cdot \: x)$, where $x \in A$, \cite[Theorem 1.2]{DaeleFourier}. 
Van Daele defines the transform by
\[
A \rightarrow \hat{A}: x \mapsto \varphi(\: \cdot \: x), \qquad x \in A.
\]
 
On the other hand, by Remark \ref{RmkState}, the normal functional $_x \varphi$, with $x \in L$ is given by $x\varphi$, since we assumed that $\varphi$ is a state. Here $x\varphi \in M_\ast$ is defined by $(x\varphi)(y) = \varphi(yx), y \in M$ (we use this notation to distinguish it from the algebraic map $\varphi(\: \cdot \: x)$). So the Fourier transform is defined by:
\[
x \mapsto \lambda(x \varphi), \qquad x \in M.
\]
Since in the transition of compact algebraic quantum groups to compact von Neumann algebraic quantum groups, the element $\varphi(\:\cdot\: x) \in \hat{A}$  corresponds to $\lambda(x \varphi) \in \hat{M}$, this shows the correspondence. 
Here, we refer to \cite{Tim} for compact algebraic quantum groups and their relations to locally compact quantum groups.

\vspace{0.3cm}

By Pontrjagin duality, one also find dual Fourier transforms. At every point in Theorem \ref{ThmFT} where one of the objects $L, R, M, \varphi, \Lambda, \lambda,  \mathcal{F}_2, \mathcal{F}_1$ appears, one should replace the object by the same object equipped with a hat and vise versa. In that way, we get a dual $L^2$-Fourier transform $\hat{\mathcal{F}}_2: \cH \rightarrow \cH $, determined by
\[
\hat{\Lambda}(x) \mapsto \Lambda(\hat{\lambda}(_x \hat{\varphi})), \qquad x \in \hat{L}.
\]
Since this map is on the GNS-level given by the identity, we automatically find the following corollary.
\begin{cor} \label{CorInverse}
We have $\mathcal{F}_2^{-1} = \hat{\mathcal{F}}_2$.
\end{cor}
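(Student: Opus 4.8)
The plan is to show that, under the identifications of $L^2(M)_{{\rm left}}$ with $\cH$ and of $L^2(\hat M)_{{\rm left}}$ with $\hat\cH = \cH$ fixed in Notation \ref{NtSect5}, \emph{both} $\mathcal{F}_2$ and $\hat{\mathcal{F}}_2$ are the identity operator on $\cH$; the corollary is then immediate.

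First I would recall the computation carried out in the proof of Theorem \ref{ThmFT}(\ref{ItemFTI}). For $x \in L$ one has $_x\varphi \in \mathcal{I}$, and by the definition of $\hat\varphi$ (hence of $\hat\Lambda$) one has $\hat\Lambda(\lambda(_x\varphi)) = \xi(_x\varphi)$. On the other hand, by (\ref{EqnVarphiLeftRight}), $_x\varphi(y^\ast) = \varphi(y^\ast x) = \langle \Lambda(x), \Lambda(y)\rangle$ for every $y \in \nphi$, which is exactly the property defining $\xi(_x\varphi) \in \cH$; hence $\xi(_x\varphi) = \Lambda(x)$. Consequently the map (\ref{EqnL2Transform}) sends $\Lambda(x)$ to $\Lambda(x)$, and since $\Lambda(\mathcal{T}_\varphi^2)$ is dense in $\cH$ (Lemma \ref{LemCoreT}) we conclude $\mathcal{F}_2 = \mathrm{id}_\cH$, whence $\mathcal{F}_2^{-1} = \mathrm{id}_\cH$.

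Next I would run the same argument for the dual quantum group $(\hat M, \hat\Delta)$, whose double dual is $(M, \Delta)$. By Pontrjagin duality every object occurring in Theorem \ref{ThmFT} has a dual counterpart, and the dual $L^2$-Fourier transform $\hat{\mathcal{F}}_2 : \hat\cH \to \hat\cH$ is the unitary determined by $\hat\Lambda(x) \mapsto \Lambda(\hat\lambda(_x\hat\varphi))$ for $x \in \hat L$. Applying the computation of the previous paragraph verbatim, but to the weight $\hat\varphi$ and with $\Lambda$, $\lambda$, $\varphi$ replaced by $\hat\Lambda$, $\hat\lambda$, $\hat\varphi$, one gets $\Lambda(\hat\lambda(_x\hat\varphi)) = \hat\Lambda(x)$, so $\hat{\mathcal{F}}_2 = \mathrm{id}_{\hat\cH}$.

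Finally, since $\hat\cH = \cH$ by construction, these two identity maps coincide, and therefore $\mathcal{F}_2^{-1} = \mathrm{id}_\cH = \hat{\mathcal{F}}_2$. The only bookkeeping worth a word is checking that $\hat{\mathcal{F}}_2$ is well-defined, i.e.\ that $\hat\Lambda(\mathcal{T}_{\hat\varphi}^2)$ is dense in $\hat\cH$; this is just Lemma \ref{LemCoreT} applied to the dual weight. Apart from that there is no real obstacle here: the entire content of the corollary is absorbed in the fact that, once the $L^2$-spaces are identified with the respective GNS-spaces, the $L^2$-Fourier transform is literally the identity operator.
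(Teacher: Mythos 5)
Your proposal is correct and follows the paper's own route: the paper likewise observes that, under the identifications of the $L^2$-spaces with the GNS-spaces, both $\mathcal{F}_2$ and $\hat{\mathcal{F}}_2$ act as the identity on $\cH = \hat{\cH}$, so the corollary is immediate. Your write-up merely makes explicit the duality bookkeeping (that $\xi(\,_x\hat{\varphi}) = \hat{\Lambda}(x)$ by the same computation for the dual weight) which the paper leaves implicit in the phrase ``this map is on the GNS-level given by the identity.''
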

  
Next, we apply the complex interpolation method to define $L^p$-Fourier transforms. 

\begin{thm}\label{ThmLpTransform}  Let $p \in [1,2]$ and set $q$ by $1/p+1/q = 1$.
There exists a unique bounded linear map $\mathcal{F}_p: L^p(M)_{{\rm left}} \rightarrow L^q(\hat{M})_{{\rm left}}$ such that $\mathcal{F}_p$ is compatible with $\mathcal{F}_1$ and $\mathcal{F}_2$ in the sense of Remark \ref{RmkCptMor}, i.e. the following diagram commutes:
\begin{equation}\label{EqnDiagLp}
\xymatrix{
&M_\ast \ar[dr]^{(\ref{EqnPairingMast})} \ar@2[rrrr]^{\mathcal{F}_1} &&&&\hat{M} \ar[dr]^{(\ref{EqnPairingM})} &\\
L \ar[rd]^{\Lambda}\ar[ru]^{l^1}\ar[r]^{l^p}  & L^p(M)_{{\rm left}} \ar[r]  \ar@2@(dr,dl)[rrrr]^{\mathcal{F}_p} & R^\ast & & \hat{L}\ar[ru]^{\hat{l}^1} \ar[r]^{\hat{l}^q}\ar[rd]_{\hat{\Lambda}} & L^q(\hat{M})_{{\rm left}} \ar[r]  & \hat{R}^\ast \\
& \cH \ar[ru]_{(\ref{EqnPairingGNS})}\ar@2[rrrr]^{\mathcal{F}_2} &&&& \cH \ar[ur]_{(\ref{EqnPairingGNS})} &\\
}
\end{equation}
Moreover, $\Vert \mathcal{F}_p \Vert \leq 1$.
\end{thm}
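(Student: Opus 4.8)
The plan is to obtain $\mathcal{F}_p$, for $p\in(1,2)$, by feeding the pair $(\mathcal{F}_1,\mathcal{F}_2)$ into the complex interpolation functor $C_\theta$ of Definition~\ref{DfnMorphCpt}, exactly as the Riesz--Thorin theorem is used in the classical case sketched in the introduction; the endpoints $p=1$ and $p=2$ are then covered by $\mathcal{F}_1$ and $\mathcal{F}_2$ themselves. First I would set up the interpolation identifications. By Theorem~\ref{ThmInterpolationL2}\,(\ref{ItemIntI}), for $p\in(1,2]$ one has $L^p(M)_{{\rm left}}=(\cH,M_\ast)_{[\theta]}$ with $\theta=\frac{2}{p}-1$, and applying Theorem~\ref{ThmInterpolationL2}\,(\ref{ItemIntII}) to the dual quantum group gives $L^q(\hat M)_{{\rm left}}=(\cH,\hat M)_{[1-\frac{2}{q}]}$ for $q\in[2,\infty)$. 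Since $\frac1p+\frac1q=1$ one computes $\frac{2}{p}-1=1-\frac{2}{q}$, so the source and the target of $\mathcal{F}_p$ are interpolation spaces of the compatible couples $(\cH,M_\ast)$ and $(\hat\cH,\hat M)$ at \emph{the same} parameter $\theta$. (Recall $\hat\cH=\cH$, and that $L^q(\hat M)_{{\rm left}}$ is by construction a subspace of $\hat R^\ast$, so the comparisons below take place inside the ambient spaces $R^\ast$ and $\hat R^\ast$.)

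The crucial step is to promote Theorem~\ref{ThmFT}\,(\ref{ItemFTIII}) from ``the diagram (\ref{EqnDiagL1L2}) commutes'' to ``$\mathcal{F}_2\colon\cH\to\hat\cH$ and $\mathcal{F}_1\colon M_\ast\to\hat M$ are compatible morphisms'' in the precise sense of Remark~\ref{RmkCptMor}. By Theorem~\ref{ThmIntersectionII} the intersection of $\cH$ and $M_\ast$ inside $R^\ast$ is exactly $\mathcal{I}$, with $\omega\in\mathcal{I}$ identified with $\xi(\omega)\in\cH$, so what must be checked is that $\mathcal{F}_1(\omega)=\mathcal{F}_2(\xi(\omega))$ in $\hat R^\ast$ for every $\omega\in\mathcal{I}$. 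For $\omega={}_x\varphi$ with $x\in L$ this is precisely the commutativity of (\ref{EqnDiagL1L2}); to pass to a general $\omega\in\mathcal{I}$ I would use Proposition~\ref{PropDensityII}, which gives $\|\cdot\|_{\mathcal{I}}$-density of $\{{}_x\varphi\mid x\in\mathcal{T}_\varphi^2\}$ in $\mathcal{I}$, together with the fact that $\omega\mapsto\mathcal{F}_1(\omega)$ and $\omega\mapsto\mathcal{F}_2(\xi(\omega))$ are both $\|\cdot\|_{\mathcal{I}}$-to-$\hat R^\ast$ continuous (for the first, $\mathcal{F}_1$ is bounded $M_\ast\to\hat M\hookrightarrow\hat R^\ast$ and $\|\cdot\|_{\mathcal{I}}\ge\|\cdot\|_{M_\ast}$; for the second, $\|\xi(\omega)\|\le\|\omega\|_{\mathcal{I}}$, $\mathcal{F}_2$ is unitary, and $\hat\cH\hookrightarrow\hat R^\ast$ is continuous). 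Passing to the limit then yields the identity on all of $\mathcal{I}$, and Remark~\ref{RmkCptMor} produces a single morphism $T$ of the compatible couples $(\cH,M_\ast)$ and $(\hat\cH,\hat M)$ with $T|_{\cH}=\mathcal{F}_2$ and $T|_{M_\ast}=\mathcal{F}_1$.

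It then remains to apply Theorem~\ref{ThmCplxInterpolationBound} (via the remark following it): $T$ restricts to a bounded linear map $\mathcal{F}_p:=C_\theta(T)\colon(\cH,M_\ast)_{[\theta]}\to(\hat\cH,\hat M)_{[\theta]}$, that is $\mathcal{F}_p\colon L^p(M)_{{\rm left}}\to L^q(\hat M)_{{\rm left}}$, with $\|\mathcal{F}_p\|\le\|\mathcal{F}_2\|^{1-\theta}\|\mathcal{F}_1\|^{\theta}=1$, since $\mathcal{F}_2$ is unitary and $\|\mathcal{F}_1\|=1$ by Theorem~\ref{ThmFT}\,(\ref{ItemFTII}). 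Because $\mathcal{F}_p$, $\mathcal{F}_1$ and $\mathcal{F}_2$ are all restrictions of the single map $T$, while the left and right triangles of (\ref{EqnDiagLp}) commute by commutativity of (\ref{EqnLpIzu}) for $M$ and for $\hat M$, the whole diagram (\ref{EqnDiagLp}) commutes; for $p\in\{1,2\}$ we simply take $\mathcal{F}_p$ to be $\mathcal{F}_1$, resp.\ $\mathcal{F}_2$ (these are $C_1(T)$, resp.\ $C_0(T)$). For uniqueness, the compatibility requirement forces any admissible $\mathcal{F}_p$ to agree with $\mathcal{F}_1$ on $L^p(M)_{{\rm left}}\cap M_\ast\supseteq\mathcal{I}$, and $\mathcal{I}=\cH\cap M_\ast$ is dense in $(\cH,M_\ast)_{[\theta]}=L^p(M)_{{\rm left}}$ by Lemma~\ref{LemDenseInt}; hence $\mathcal{F}_p$ is determined.

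I expect the only genuinely delicate point to be the one in the second paragraph: converting the commutativity of (\ref{EqnDiagL1L2}) on the image of $L$ into agreement of $\mathcal{F}_1$ and $\mathcal{F}_2$ on the \emph{entire} intersection $\cH\cap M_\ast$, where Theorem~\ref{ThmIntersectionII} and the density statement of Proposition~\ref{PropDensityII} are indispensable; everything else is a matter of matching the interpolation parameters $\frac{2}{p}-1$ and $1-\frac{2}{q}$ and invoking the functoriality of $C_\theta$.
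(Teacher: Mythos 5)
Your proposal is correct and follows essentially the same route as the paper: identify $L^p(M)_{{\rm left}}$ and $L^q(\hat{M})_{{\rm left}}$ as interpolation spaces of $(\cH, M_\ast)$ and $(\cH,\hat{M})$ at the common parameter $\theta = 2/p-1 = 1-2/q$ via Theorem \ref{ThmInterpolationL2}, then feed the compatible pair $(\mathcal{F}_2,\mathcal{F}_1)$ of Theorem \ref{ThmFT} into the interpolation functor (Remark \ref{RmkCptMor} and Theorem \ref{ThmCplxInterpolationBound}) to get $\mathcal{F}_p$ with $\Vert \mathcal{F}_p\Vert \leq 1$, uniqueness coming from density of the intersection (Lemma \ref{LemDenseInt}). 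The only difference is that you spell out, via Proposition \ref{PropDensityII} and continuity of the embeddings into $\hat{R}^\ast$, the passage from commutativity of (\ref{EqnDiagL1L2}) on $L$ to agreement of $\mathcal{F}_1$ and $\mathcal{F}_2$ on all of $\mathcal{I} = \cH \cap M_\ast$, a point the paper absorbs into Theorem \ref{ThmFT}\,(\ref{ItemFTIII}) without separate comment.
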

\begin{proof}
We can apply the complex interpolation method with parameter $\theta = 2/p-1 = 1-2/q$ to the pairs $(\cH, M_\ast)$ and $( \cH, \hat{M})$ which are  compatible couples as in (\ref{EqnDiagL1L2}). By Theorem \ref{ThmInterpolationL2} the corresponding interpolation spaces are respectively $L^p(M)_{{\rm left}}$ and $L^q(\hat{M})_{{\rm left}}$.

Since by Theorem \ref{ThmLpTransform}, $\mathcal{F}_1: M_\ast \rightarrow \hat{M}$ and $\mathcal{F}_2: \cH \rightarrow \cH$ are compatible, with respect to diagram (\ref{EqnDiagL1L2}), we can use Remark \ref{RmkCptMor} to obtain a map $\mathcal{F}_p: L^p(M)_{{\rm left}} \rightarrow L^q(\hat{M})_{{\rm left}}$ with the desired properties.
\end{proof}

We conclude this section by giving the Fourier transform explicitly in terms of Hilsum's $L^p$-spaces.  We omit the proof and merely give a few comments. The result relies on some technicalities involving Hilsum's $L^p$-spaces, which was not our focus. The theorem is not needed for the the subsequent sections.

\begin{thm}\label{ThmLpTransformHilsum}
Let $p \in [1,2]$ and set $q$ by $1/p+1/q=1$. Fix a normal, semi-finite, faithful weight $\phi$ on $M'$ and $\hat{\phi}$ on $\hat{M}'$. Set the corresponding spatial derivatives $d = d\varphi/d\phi$ and $\hat{d} = d\hat{\varphi}/d\hat{\phi}$. 
Then,
\[
\hat{\Phi}_q^{-1} \mathcal{F}_p \Phi_p: L^p(\phi) \rightarrow L^q(\hat{\phi}): [a d^{1/p}] \mapsto [\lambda(_{a } \varphi) \hat{d}^{1/q}], \qquad a \in \mathcal{T}_\varphi^2.
\]
\end{thm}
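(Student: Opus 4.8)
The plan is to compute $\hat{\Phi}_q^{-1}\mathcal{F}_p\Phi_p$ on $[ad^{1/p}]$ by passing that element through the three maps in turn, for $a\in\mathcal{T}_\varphi^2$. First, $[ad^{1/p}]\in L^p(\phi)$ and $\Phi_p([ad^{1/p}])=l^p(a)$: this is exactly Proposition \ref{PropL2Identifications}(\ref{ItemComparisonI})--(\ref{ItemComparisonII}) (writing $a=bc$ with $b,c\in\mathcal{T}_\varphi$).

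Next I would identify $\mathcal{F}_p(l^p(a))$. Recall from the proof of Theorem \ref{ThmLpTransform} that $\mathcal{F}_p=C_\theta(\mathcal{F})$ with $\theta=2/p-1$, where $\mathcal{F}\colon(\cH,M_\ast)\to(\cH,\hat M)$ is the morphism of compatible couples whose components are $\mathcal{F}_2$ and $\mathcal{F}_1$ (Remark \ref{RmkCptMor}, Theorem \ref{ThmFT}(\ref{ItemFTIII})), and that $L^p(M)_{{\rm left}}=(\cH,M_\ast)_{[\theta]}$ and $L^q(\hat M)_{{\rm left}}=(\cH,\hat M)_{[\theta]}$ by Theorem \ref{ThmInterpolationL2}. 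Since $a\in\mathcal{T}_\varphi^2\subseteq L$, the element $l^p(a)$ agrees in $R^\ast$ with $l^1(a)={}_a\varphi$, and by (\ref{EqnVarphiLeftRight}) together with $\mathcal{T}_\varphi^2\subseteq L$ one has ${}_a\varphi\in\mathcal{I}=\cH\cap M_\ast$; hence $l^p(a)$ is the canonical image of ${}_a\varphi$ under the dense inclusion $\mathcal{I}\hookrightarrow(\cH,M_\ast)_{[\theta]}$ of Lemma \ref{LemDenseInt}. A morphism of compatible couples carries the intersection into the intersection and intertwines these canonical inclusions (test it on a Gaussian-damped constant element of $\mathcal{G}(\cH,M_\ast)$), so $\mathcal{F}_p(l^p(a))$ is the canonical image of $\mathcal{F}_1({}_a\varphi)=\lambda({}_a\varphi)$ in $(\cH,\hat M)_{[\theta]}$. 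As in the proof of Theorem \ref{ThmFT}(\ref{ItemFTI}), $\lambda({}_a\varphi)\in\nphihat$ with $\hat\Lambda(\lambda({}_a\varphi))=\xi({}_a\varphi)=\Lambda(a)$, and $\nphihat=\cH\cap\hat M$ in $\hat R^\ast$ by Theorem \ref{ThmIntersectionIII} applied to $\hat M$. Thus $\mathcal{F}_p(l^p(a))$ is the canonical image of $\lambda({}_a\varphi)\in\nphihat$ in $L^q(\hat M)_{{\rm left}}$.

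It then remains to show that for every $q\in[2,\infty)$ the map $\hat\Phi_q^{-1}$ sends the canonical image of an arbitrary $y\in\nphihat$ in $L^q(\hat M)_{{\rm left}}$ to $[y\hat d^{1/q}]$; applying this with $y=\lambda({}_a\varphi)$ finishes the proof. For $y\in\hat{\mathcal{T}}_{\hat\varphi}^2$ this is Proposition \ref{PropL2Identifications}(\ref{ItemComparisonII}) for $\hat M$ (note that $\hat l^q(y)$ is the canonical image of $y$). The general case should follow by density: $\hat{\mathcal{T}}_{\hat\varphi}^2$ is a core for $\hat\Lambda$ (Lemma \ref{LemCoreT}), the canonical inclusion $\nphihat\to L^q(\hat M)_{{\rm left}}$ is contractive for the norm $\max\{\|\cdot\|,\|\hat\Lambda(\cdot)\|\}$, and $y\mapsto\hat\Phi_q([y\hat d^{1/q}])$ is continuous for the same norm because $\hat\Phi_q$ is isometric and $\|[y\hat d^{1/q}]\|_q\le\|y\|^{1-2/q}\,\|\hat\Lambda(y)\|^{2/q}$.

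The main obstacle is precisely this H\"older-type estimate for $\|[y\hat d^{1/q}]\|_q$ (equivalently, the continuity of $y\mapsto[y\hat d^{1/q}]$ from $\nphihat$, with the graph norm, into Hilsum's $L^q(\hat\phi)$), which must be extracted from the concrete spatial-derivative picture --- a refinement of Lemma \ref{LemInc} and \cite[Theorem 26]{TerpII} --- rather than from the abstract interpolation description used elsewhere; this is the Hilsum-space technicality alluded to in the statement. As a consistency check, the endpoint cases are unconditional: for $p=1$ one has $\Phi_1([ad])={}_a\varphi$, $\mathcal{F}_1({}_a\varphi)=\lambda({}_a\varphi)$ and $\hat\Phi_\infty=\iota$ on $\hat M$, while for $p=2$ the transform $\mathcal{F}_2$ is the identity on the GNS level and $\hat\Phi_2^{-1}$ corresponds to $\hat{\mathcal{P}}^{-1}\colon\hat\Lambda(y)\mapsto[y\hat d^{1/2}]$ from Lemma \ref{LemInc}, so the claimed formula drops out in both cases. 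Alternatively, one could avoid the estimate by proving directly that $\lambda({}_a\varphi)\in\hat{\mathcal{T}}_{\hat\varphi}^2$ for $a\in\mathcal{T}_\varphi^2$ --- an analyticity statement for $\lambda({}_a\varphi)$ with respect to the dual modular and scaling groups --- after which Proposition \ref{PropL2Identifications}(\ref{ItemComparisonII}) for $\hat M$ applies verbatim.
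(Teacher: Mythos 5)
The paper itself does not prove Theorem \ref{ThmLpTransformHilsum}: it explicitly omits the proof and only records an outline (a careful analysis of (\ref{EqnDiagLp}) via Proposition \ref{PropL2Identifications}, together with the fact that any $x\in\nphihat$, regarded as an element of $L^q(\hat{M})_{{\rm left}}$, satisfies $\hat{\Phi}_q x=[x\hat{d}^{1/q}]$). Your proposal follows exactly this outline — the steps $\Phi_p[ad^{1/p}]=l^p(a)$, the identification of $\mathcal{F}_p(l^p(a))$ with the canonical image of $\lambda({}_a\varphi)\in\nphihat$ using Theorems \ref{ThmInterpolationL2} and \ref{ThmIntersectionIII}, and the reduction to the displayed fact — and you correctly isolate the same technical point that the paper leaves unproved, so in that sense your route coincides with the paper's; like the paper, you do not actually establish that point. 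Two remarks on your sketch of it. First, the density argument cannot literally be run ``for the norm $\max\{\Vert\cdot\Vert,\Vert\hat{\Lambda}(\cdot)\Vert\}$'': Lemma \ref{LemCoreT} only provides a $\sigma$-strong-$\ast$/norm core, so an approximating net $y_j\in\mathcal{T}_{\hat{\varphi}}^2$ can be taken bounded with $\hat{\Lambda}(y_j)\to\hat{\Lambda}(y)$ in norm, but it will not converge to $y$ in operator norm. What saves the argument is precisely the H\"older-type bound you single out, $\Vert[y\hat{d}^{1/q}]\Vert_q\le\Vert y\Vert^{1-2/q}\Vert\hat{\Lambda}(y)\Vert^{2/q}$ (extractable from \cite[Theorem 26]{TerpII}, or provable by a three-lines argument), together with the analogous interpolation inequality $\Vert y\Vert_{[1-2/q]}\le\Vert\hat{\Lambda}(y)\Vert^{2/q}\Vert y\Vert^{1-2/q}$ for intersection elements of $(\cH,\hat{M})$: with these, boundedness of $\Vert y_j-y\Vert$ suffices and both sides converge. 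So the crux you identify is the right one, but the continuity should be formulated in this core-compatible way rather than with the max-norm. Second, the suggested shortcut of showing $\lambda({}_a\varphi)\in\mathcal{T}_{\hat{\varphi}}^2$ for $a\in\mathcal{T}_\varphi^2$ is doubtful: $\hat{\sigma}$-analyticity of $\lambda(\omega)$ is governed by the behaviour of $\omega$ under the scaling group and the modular element of $(M,\Delta)$, not by $\sigma$-analyticity of $a$, so it would need extra hypotheses and is best avoided.
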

Note that in Theorem \ref{ThmLpTransformHilsum}, we see that $[a d^{1/p}]$ is in $L^p(\phi)$ by Proposition \ref{PropL2Identifications}. Moreover, since $ _{a } \varphi  \in \mathcal{I}$, we see that $\lambda(_{a } \varphi) \in \mathfrak{n}_{\hat{\varphi}}$. Therefore, $ [\lambda(_{a } \varphi) \hat{d}^{1/q}]$ is in $L^q(\hat{\phi})$ by Lemma \ref{LemInc}. 
The theorem follows by a careful analyis of (\ref{EqnDiagLp}) involving Proposition \ref{PropL2Identifications}. The proof then relies on the following fact. For $x \in \mathfrak{n}_{\hat{\varphi}}$, one can consider $x$ as an element of $L^q(\hat{M})_{{\rm left}}$, see Section \ref{SectIntersections}, and one can prove that $\hat{\Phi}_q x = [x\hat{d}^{1/q}]$.

\section{Convolution product}\label{SectApplications}

 We   define convolutions of elements in $L^1(M)_{{\rm left}} = M_\ast$ with elements in $L^p(M)_{{\rm left}}$. We prove that the Fourier transform transfers the convolution product into a product on the dual quantum group.

\begin{nt}
We keep the notation as in Section \ref{SectFT}, c.f. Notation \ref{NtSect5}.
\end{nt}

 Note that since Izumi's $L^p$-spaces are defined by means of complex interpolation, there is a priori no multiplication on these spaces. 
Therefore, we  extend the multiplication of $M$ to the $L^p$-setting in the following proposition. This seems to be the most natural definition of a multiplication in the $L^p$-setting. The proof of the following proposition is completely similar to the one of Theorem \ref{ThmFT} (\ref{ItemFTIII}) and \ref{ThmLpTransform}.
 
\begin{prop}\label{PropProduct} We extend the product of $M$ to the $L^p$-setting.
\begin{enumerate}
\item Let $x \in M$. The maps 
\[
\begin{split}
m_x^{\infty}: M \rightarrow M: y \mapsto xy, \\
m_x^1: M_\ast \rightarrow M_\ast: \omega \mapsto x \omega,
\end{split}
\]
 are compatible in the sense of Remark \ref{RmkCptMor}, i.e. the non-dotted arrows in the following diagram commute:
\begin{equation}\label{EqnDiagramProduct}
\xymatrix{
&M_\ast \ar[dr]^{(\ref{EqnPairingMast})} \ar@2[rrrr]^{m_x^1} &&&&M_\ast \ar[dr]^{(\ref{EqnPairingMast})} &\\
L \ar[rd]^{l^\infty}\ar[ru]^{l^1}\ar[r]^{l^p}  & L^p(M)_{{\rm left}} \ar[r]  \ar@2@{==>}@(dr,dl)[rrrr]^{m_x^p} & R^\ast & & L \ar[ru]^{l^1} \ar[r]^{l^p}\ar[rd]_{l^\infty} & L^p(M)_{{\rm left}} \ar[r]  & R^\ast .\\
& M \ar[ru]_{(\ref{EqnPairingM})}\ar@2[rrrr]^{m_x^\infty} &&&& M\ar[ur]_{(\ref{EqnPairingM})} &\\
}
\end{equation}
\item Let $p \in (1, \infty)$. There is a unique bounded map $m_x^p: L^p(M)_{{\rm left}} \rightarrow L^p(M)_{{\rm left}}$ that is compatible with $m_x^\infty$ and $m_x^1$, i.e. the dotted arrow in (\ref{EqnDiagramProduct}) makes the diagram commutative. 
\end{enumerate}
\end{prop}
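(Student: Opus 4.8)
The plan is to mimic the structure of the proofs of Theorem \ref{ThmFT}(\ref{ItemFTIII}) and Theorem \ref{ThmLpTransform}, since the multiplication by a fixed $x \in M$ is an honest morphism of compatible couples. First I would verify the two maps $m_x^\infty : M \to M$ and $m_x^1 : M_\ast \to M_\ast$ are well-defined bounded maps with $\Vert m_x^\infty\Vert \leq \Vert x \Vert$ and $\Vert m_x^1\Vert \leq \Vert x\Vert$; this is immediate since $\Vert x\omega\Vert \leq \Vert x\Vert\,\Vert\omega\Vert$ for $\omega\in M_\ast$. Next I would check compatibility in the sense of Remark \ref{RmkCptMor}, i.e. that $m_x^\infty$ and $m_x^1$ agree on $L = M_\ast \cap M$ when both are viewed as subspaces of $R^\ast$ via the pairings (\ref{EqnPairingMast}) and (\ref{EqnPairingM}). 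The key computation is, for $z \in L$ and $y \in R$,
\[
\langle m_x^1({}_z\varphi), y\rangle_{R^\ast,R} = ({}_z\varphi)(xy) = \varphi(xyz) = \varphi_y(xz) = \langle xz, y\rangle_{R^\ast,R} = \langle m_x^\infty(z), y\rangle_{R^\ast, R},
\]
where the middle steps use (\ref{EqnVarphiLeftRight}) together with the inclusion $ML \subseteq L$ from Corollary \ref{CorInclusions} (which guarantees $xz \in L$, so that $\varphi_y(xz) = \varphi(xyz)$ is legitimate and the pairing with $M$ applies). A mild point to be careful about is that this identity should first be checked on $z$ ranging over a dense subset such as $\mathcal{T}_\varphi^2$ where everything is manifestly defined, and then extended; but since $ML \subseteq L$ the displayed equalities already hold verbatim for all $z \in L$.

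Having established compatibility, Remark \ref{RmkCptMor} produces a unique bounded map $M + M_\ast \to M + M_\ast$ extending both, and Theorem \ref{ThmCplxInterpolationBound} (the Riesz--Thorin-type functoriality of $C_\theta$) then restricts it to a bounded map $m_x^p : (M,M_\ast)_{[1/p]} \to (M,M_\ast)_{[1/p]}$, i.e. $m_x^p : L^p(M)_{\rm left} \to L^p(M)_{\rm left}$, with norm bound $\Vert m_x^p\Vert \leq \Vert m_x^\infty\Vert^{1-1/p}\Vert m_x^1\Vert^{1/p} \leq \Vert x\Vert$. Uniqueness of $m_x^p$ follows from Lemma \ref{LemDenseInt}: $M \cap M_\ast = L$ is dense in $L^p(M)_{\rm left}$, so any two bounded maps compatible with $m_x^1$ and $m_x^\infty$ agree on the dense subspace $l^p(L)$ and hence coincide. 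This makes the dotted arrow in (\ref{EqnDiagramProduct}) commute by construction.

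I do not expect a serious obstacle here; the proposition is essentially a direct application of machinery already assembled, exactly as the paper signals by saying the proof is ``completely similar'' to the earlier ones. The only thing demanding a little care is the well-definedness of the pairing computation above: one must ensure that each occurrence of an expression like $\varphi(xyz)$ falls within the domains guaranteed by Proposition \ref{PropAltDescr} and Corollary \ref{CorInclusions} (in particular that $xz \in L$ and $y \in R$, so $\varphi_y(xz) = \varphi(xyz)$ is the valid instance of (\ref{EqnVarphiLeftRight})), rather than invoking (\ref{EqnVarphiLeftRight}) for arbitrary elements of $M$, which the paper explicitly warns against. Once that bookkeeping is done, the interpolation argument is automatic.
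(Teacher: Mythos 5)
Your architecture is exactly the intended one (the paper omits the proof precisely because it is the same interpolation argument as Theorems \ref{ThmFT}(\ref{ItemFTIII}) and \ref{ThmLpTransform}): check boundedness of $m_x^\infty$ and $m_x^1$, check that they agree on $L$ inside $R^\ast$, invoke Remark \ref{RmkCptMor} and Theorem \ref{ThmCplxInterpolationBound} for existence and the bound $\Vert m_x^p\Vert\leq\Vert x\Vert$, and get uniqueness from density of $l^p(L)$ (Lemma \ref{LemDenseInt}). The uniqueness argument and the norm bound are fine as you state them.

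However, the one substantive computation — the pairing identity — is wrong as written, because you have the noncommutative orders backwards in two places. With the paper's convention $(x\omega)(a)=\omega(ax)$, the first equality should be $\langle m_x^1({}_z\varphi),y\rangle_{R^\ast,R}=(x\,{}_z\varphi)(y)=({}_z\varphi)(yx)$, not $({}_z\varphi)(xy)$; and (\ref{EqnVarphiLeftRight}) gives $\varphi_y(xz)=\varphi(y\,xz)$, so your asserted identity $\varphi(xyz)=\varphi_y(xz)$ (which you even repeat in the last paragraph as ``the valid instance of (\ref{EqnVarphiLeftRight})'') is false in general; the valid instance is $\varphi_y(xz)=\varphi(yxz)$. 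The two slips cancel, so your endpoints match, but the justification does not stand as written. The correct chain, for $z\in L$, $y\in R$, using that $yx\in\nphi^\ast$ (since $\nphi$ is a left ideal and $y^\ast\in\nphi$) and $xz\in ML\subseteq L\subseteq\nphi$ (Corollary \ref{CorInclusions}), is
\[
\langle m_x^1({}_z\varphi),y\rangle_{R^\ast,R}=(x\,{}_z\varphi)(y)=({}_z\varphi)(yx)=\varphi(yxz)=\varphi_y(xz)=\langle m_x^\infty(z),y\rangle_{R^\ast,R}.
\]
Equivalently, and perhaps cleaner: Corollary \ref{CorInclusions} already records ${}_{xz}\varphi(a)={}_z\varphi(ax)$, i.e.\ $m_x^1({}_z\varphi)={}_{xz}\varphi$, so both $m_x^1(l^1(z))$ and $m_x^\infty(l^\infty(z))$ are the images of the single element $xz\in L$ under $l^1$ and $l^\infty$, which coincide in $R^\ast$ by commutativity of (\ref{EqnLpIzu}); no extension from $\mathcal{T}_\varphi^2$ is needed. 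With this repair your proof goes through.
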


\begin{dfn}\label{DfnProduct}
Let $x \in M$ and let $y \in L^p(M)_{{\rm left}}$.  We will write $xy$ for $m_x^p(y)$.
\end{dfn}

For $\omega_1, \omega_2 \in M_\ast$, we define the convolution product,
\[
\omega_1 \ast \omega_2 = (\omega_1 \otimes \omega_2) \circ \Delta.
\]
 This product is well-known in the theory of l.c. quantum groups. We show that it is possible to extend it to the $L^p$-setting for $p \in [1,2]$. 
Moreover, the convolution product is turned into the product of Definition \ref{DfnProduct} by the Fourier transfrom. 

\begin{thm} Let $p \in [1,2]$ and set $q \in [2, \infty]$ by $1/p+1/q = 1$.
\begin{enumerate}
\item\label{ItemConvoI} Let $x \in L$ and let $\omega \in M_\ast$. Then, 
\[
\omega \ast (_x\varphi)    \in \mathcal{I} \qquad {\rm  and  } \qquad  \xi(\omega \ast (_x \varphi) ) = \lambda(\omega) \Lambda(x).
\]
\item\label{ItemConvoII}  Let $\omega \in M_\ast$. We denote $\omega \ast^2$ for the bounded operator  $\lambda(\omega): \cH \rightarrow \cH$. Furthermore, we define  $\omega \ast^1: M_\ast \rightarrow M_\ast: \theta \mapsto \omega \ast \theta$. Then, $\omega \ast^1$ and $\omega \ast^2$ are compatible, i.e. the non-dotted arrows in following diagram commute:
 \begin{equation}\label{EqnDiagramConvolution}
\xymatrix{
&M_\ast \ar[dr]^{(\ref{EqnPairingMast})} \ar@2[rrrr]^{\omega \ast^1} &&&&M_\ast \ar[dr]^{(\ref{EqnPairingMast})} &\\
L \ar[rd]^{\Lambda}\ar[ru]^{l^1}\ar[r]^{l^p}  & L^p(M)_{{\rm left}} \ar[r]  \ar@2@{==>}@(dr,dl)[rrrr]^{\omega \ast^p} & R^\ast & & L \ar[ru]^{l^1} \ar[r]^{l^p}\ar[rd]_{\Lambda} & L^p(M)_{{\rm left}} \ar[r]  & R^\ast. \\
& \cH \ar[ru]_{(\ref{EqnPairingGNS})}\ar@2[rrrr]^{\omega \ast^2} &&&& \cH \ar[ur]_{(\ref{EqnPairingGNS})} &\\
}
\end{equation}
\item\label{ItemConvoIII}  There is a unique bounded operator $\omega \ast^p: L^p(M)_{{\rm left}}\rightarrow  L^p(M)_{{\rm left}} $ that is compatible with $\omega \ast^1$ and $\omega \ast^2$, i.e. (\ref{EqnDiagramConvolution}) commutes.
 \item\label{ItemConvoIV}  For $\omega \in M_\ast$, $a \in L^p(M)_{{\rm left}}$,
 \[
\mathcal{F}_1(\omega) \mathcal{F}_p( a ) = \mathcal{F}_p(\omega \ast^p a),
\]
where the left hand side uses Definition \ref{DfnProduct} for $L^q(\hat{M})_{{\rm left}}$.
\end{enumerate}
\end{thm}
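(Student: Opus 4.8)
The plan is to prove (\ref{ItemConvoI}) first, since it is the only part carrying genuine quantum-group content, and then to obtain (\ref{ItemConvoII}), (\ref{ItemConvoIII}) and (\ref{ItemConvoIV}) by the same interpolation-and-diagram pattern already used in the proofs of Theorems \ref{ThmFT} and \ref{ThmLpTransform}. For (\ref{ItemConvoI}), recall from the proof of Theorem \ref{ThmFT}(\ref{ItemFTI}) that for $x \in L$ we have ${}_x\varphi \in \mathcal{I}$ with $\xi({}_x\varphi) = \Lambda(x)$, i.e.\ $\lambda({}_x\varphi) \in \nphihat$ and $\hat\Lambda(\lambda({}_x\varphi)) = \Lambda(x)$. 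Since $\lambda$ is multiplicative for the convolution ($\lambda(\omega_1 \ast \omega_2) = \lambda(\omega_1)\lambda(\omega_2)$, a standard consequence of the pentagon equation), we get $\lambda(\omega \ast ({}_x\varphi)) = \lambda(\omega)\lambda({}_x\varphi)$, which lies in $\nphihat$ because $\lambda(\omega) \in \hat M$ and $\nphihat$ is a left ideal, with $\hat\Lambda$-image $\lambda(\omega)\hat\Lambda(\lambda({}_x\varphi)) = \lambda(\omega)\Lambda(x)$. Feeding this into the standard Kustermans--Vaes identification $\mathcal{I} = \{\theta \in M_\ast \mid \lambda(\theta) \in \nphihat\}$ with $\xi = \hat\Lambda \circ \lambda$ on $\mathcal{I}$, we conclude $\omega \ast ({}_x\varphi) \in \mathcal{I}$ and $\xi(\omega \ast ({}_x\varphi)) = \lambda(\omega)\Lambda(x)$.

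Part (\ref{ItemConvoII}) is now immediate: for $x \in L$ the functional $\omega \ast^1({}_x\varphi) = \omega \ast ({}_x\varphi)$ lies in $\mathcal{I}$ and, by Theorem \ref{ThmIntersectionII}, equals $\xi(\omega \ast ({}_x\varphi)) = \lambda(\omega)\Lambda(x) = \omega \ast^2 \Lambda(x)$ inside $R^\ast$, which is exactly the commutativity of the non-dotted part of (\ref{EqnDiagramConvolution}); since $\omega\ast^1$ and $\omega\ast^2 = \lambda(\omega)$ are bounded of norm $\le \Vert\omega\Vert$, Remark \ref{RmkCptMor} turns them into a compatible morphism. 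For (\ref{ItemConvoIII}), apply the complex interpolation functor $C_{2/p-1}$ to this morphism on the couple $(\cH, M_\ast)$; by Theorem \ref{ThmInterpolationL2} the corresponding interpolation space is $L^p(M)_{{\rm left}}$, yielding a bounded $\omega\ast^p$ with $\Vert\omega\ast^p\Vert \le \Vert\omega\Vert$, and uniqueness follows since $l^p(L)$ is dense in $L^p(M)_{{\rm left}}$ (Lemma \ref{LemDenseInt}, Proposition \ref{PropDensityII}).

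For (\ref{ItemConvoIV}) I would first record that on $L^2$ the product $m^2_x$ of Proposition \ref{PropProduct} is simply the GNS action (for $M$ on $\cH$, and likewise for $\hat M$ on $\hat{\cH}$): for $y \in L$ one has $xy \in L$ by Corollary \ref{CorInclusions}, and compatibility with $m^1_x$ together with Theorem \ref{ThmIntersectionII} gives $m^2_x\Lambda(y) = x\cdot{}_y\varphi = {}_{xy}\varphi = \Lambda(xy) = x\Lambda(y)$, which extends to all of $\cH$ by density. Then I would compare the two morphisms of compatible couples $(\cH, M_\ast) \to (\hat{\cH}, \hat M)$ given by $T = (\mathcal{F}_2\circ(\omega\ast^2),\, \mathcal{F}_1\circ(\omega\ast^1))$ and $T' = (m^2_{\lambda(\omega)}\circ\mathcal{F}_2,\, m^\infty_{\lambda(\omega)}\circ\mathcal{F}_1)$: on $M_\ast$ they agree because $\mathcal{F}_1(\omega\ast\theta) = \lambda(\omega\ast\theta) = \lambda(\omega)\lambda(\theta) = m^\infty_{\lambda(\omega)}(\mathcal{F}_1(\theta))$, and on $\cH$ they agree because $\mathcal{F}_2$ is the identity map and both sides are $\xi\mapsto\lambda(\omega)\xi$. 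Hence $T = T'$, and applying $C_{2/p-1}$, identifying $\mathcal{F}_p = C_{2/p-1}(\mathcal{F}_2,\mathcal{F}_1)$, $\omega\ast^p = C_{2/p-1}(\omega\ast^2,\omega\ast^1)$ and $m^q_{\lambda(\omega)} = C_{2/p-1}(m^2_{\lambda(\omega)},m^\infty_{\lambda(\omega)})$ (the last via re-iteration, Theorem \ref{ThmInterpolationL2}), together with functoriality of $C_\theta$ on compositions, we obtain $m^q_{\lambda(\omega)}\circ\mathcal{F}_p = \mathcal{F}_p\circ(\omega\ast^p)$, i.e.\ $\mathcal{F}_1(\omega)\mathcal{F}_p(a) = \mathcal{F}_p(\omega\ast^p a)$; the endpoint cases $p = 1$ ($q=\infty$) and $p = 2$ ($q=2$) are the two verifications just made.

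The one step I expect to be the real obstacle is the closure statement in (\ref{ItemConvoI}), namely that $\omega \ast ({}_x\varphi)$ returns to $\mathcal{I}$: this uses the description of $\mathcal{I}$ via $\lambda$ and the left-ideal property of $\nphihat$, hence draws on Kustermans--Vaes structure theory rather than on the interpolation machinery developed here. Once that is in hand, all of (\ref{ItemConvoII})--(\ref{ItemConvoIV}) is bookkeeping with compatible couples and with the functoriality of the complex interpolation method.
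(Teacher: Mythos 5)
Your proposal is correct, and for parts (\ref{ItemConvoII})--(\ref{ItemConvoIV}) it follows essentially the paper's own pattern (compatibility on the intersection via Theorem \ref{ThmIntersectionII}, then the interpolation functor via Theorem \ref{ThmInterpolationL2}, then density of $l^p(L)$). The genuine difference is in part (\ref{ItemConvoI}). You deduce $\omega \ast ({}_x\varphi) \in \mathcal{I}$ from $\lambda(\omega \ast ({}_x\varphi)) = \lambda(\omega)\lambda({}_x\varphi) \in \nphihat$ together with the characterization $\mathcal{I} = \{\theta \in M_\ast \mid \lambda(\theta) \in \nphihat\}$, $\xi = \hat{\Lambda} \circ \lambda$. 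That characterization is true, but note that only the inclusion $\lambda(\mathcal{I}) \subseteq \nphihat$ is part of the definition of $\hat{\varphi}$ used in this paper; the converse ($\lambda(\theta) \in \nphihat \Rightarrow \theta \in \mathcal{I}$) is a nontrivial fact from the Kustermans--Vaes construction of the dual weight and is nowhere established here, so you would have to import it with a precise citation. The paper avoids it altogether: it pairs $\omega \ast ({}_x\varphi)$ against $y^\ast$ with $y = \hat{\lambda}(\theta)$, $\theta \in \hat{\mathcal{I}}$, and computes $(\omega \ast ({}_x\varphi))(y^\ast) = \langle \lambda(\omega)\Lambda(x), \Lambda(y)\rangle$ using only $(\Delta \otimes \iota)(W) = W_{13}W_{23}$, the left ideal property of $\nphihat$, the defining relation $\hat{\Lambda}(\lambda({}_x\varphi)) = \xi({}_x\varphi) = \Lambda(x)$, and biduality; since $\{\Lambda(\hat{\lambda}(\theta)) \mid \theta \in \hat{\mathcal{I}}\}$ is a core for $\Lambda$, this verifies the defining boundedness property of $\mathcal{I}$ directly. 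Your route is shorter once the external characterization is granted; the paper's is self-contained relative to its stated definition of $\hat{\varphi}$. Finally, in (\ref{ItemConvoIV}) your functorial argument needs the additional identification that the module action $m^q_{\lambda(\omega)}$ of Proposition \ref{PropProduct}, which is constructed by interpolating the couple $(M_\ast, \hat{M})$ at $1/q$, coincides with $C_{1-2/q}(m^2_{\lambda(\omega)}, m^\infty_{\lambda(\omega)})$ over the couple $(\cH, \hat{M})$ obtained by re-iteration; this is a routine check (both maps agree on the dense image of $\nphihat$, cf.\ Theorem \ref{ThmIntersectionIII} and Lemma \ref{LemDenseInt}), but it should be said. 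The paper sidesteps it by simply verifying the identity on the dense set $l^p(L)$ and invoking boundedness once, which is marginally more economical.
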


\begin{proof}
Let $\theta \in \hat{\mathcal{I}}$ and put $y = \hat{\lambda}(\theta) = (\iota \otimes \theta)(W^\ast)$. Now, (\ref{ItemConvoII}) follows from,
\[
 \begin{split}
  &(\omega \ast (_x \varphi) )(y^\ast) = (\omega \otimes (_x \varphi) )  \Delta((\iota \otimes \theta)(W^\ast)^\ast)
= (\omega \otimes  (_x \varphi)  \otimes \overline{\theta})(W_{13} W_{23}) \\ =& \overline{\theta (\left( \: (\omega \otimes \iota)(W) ( (_x \varphi)  \otimes \iota)(W) \: \right)^\ast)} = \langle \hat{\Lambda}\left( (\omega \otimes \iota)(W) ( (_x \varphi)  \otimes \iota)(W) \right), \hat{\xi}(\theta) \rangle \\
= &\langle  (\omega \otimes \iota)(W) \xi(_x \varphi) , \Lambda(y) \rangle = \langle  \lambda(\omega) \Lambda(x), \Lambda(y) \rangle ,
 \end{split}
\]
and the fact that $\{ \Lambda((\iota \otimes \theta)(W^\ast)) \mid \theta \in \hat{\mathcal{I}} \}$ is dense in $\cH$. 

The compatibility in (\ref{ItemConvoII}) follows directly from (\ref{ItemConvoI}) using Theorem \ref{ThmIntersectionII}.
(\ref{ItemConvoIII}) follows by applying Theorem \ref{ThmInterpolationL2} to (\ref{ItemConvoII}).
(\ref{ItemConvoIV}) For $\omega_1, \omega_2 \in M_\ast$, note that
\[
\mathcal{F}_1(\omega_1 \ast \omega_2) = (\omega_1 \otimes \omega_2 \otimes \iota) (\Delta \otimes \iota) (W) = (\omega_1 \otimes \omega_2 \otimes \iota) W_{13} W_{23} = (\omega_1 \otimes \iota)(W) (\omega_2 \otimes \iota)(W).
\]
For $x \in L, \omega \in M_\ast$, 
\[
  \mathcal{F}_p(\omega \ast^p l^p(x) )  =  \mathcal{F}_1(\omega \ast (_x \varphi))   =    \mathcal{F}_1(\omega)  \mathcal{F}_1(_x\varphi) =   \mathcal{F}_1(\omega)  \mathcal{F}_p( l^p(x)). 
\]
Here, the first and last equality follows from commutativity of (\ref{EqnDiagLp}), (\ref{EqnDiagramProduct}) and (\ref{EqnDiagramConvolution}).
Since the range of $l^p$ is dense in $L^p(M)_{{\rm left}}$, see Lemma \ref{LemDenseInt},  (\ref{ItemConvoIV}) follows.

\end{proof}

\section{A distinguished choice for the interpolation parameter}\label{SectSUTwo}
 
 Recall that in Sections \ref{SectIntersections} to  \ref{SectApplications} we considered the compatible couple $(M, M_\ast)$ for the interpolation parameter $z = -1/2$, see Definition \ref{DfnL}. For this parameter one is able to define a $L^p$-Fourier transform. In this section we show that the real part of the parameter is distinguished. 
More precisely, we investigate the example of $(M, \Delta) = SU_q(2)$ and show that given the fact that 
\begin{equation}\label{EqnLambda}
\mathcal{F}_1: M_\ast \rightarrow \hat{M}: \omega \mapsto (\omega \otimes \iota)(W),
\end{equation}
 is the $L^1$-Fourier transform the only interpolation parameters $z$ that allows a passage to a $L^p$-Fourier transform are $z = -1/2 + it$, where $t \in \mathbb{R}$. 

\vspace{0.3cm}

The importance of this result is strengthened by the final remark of \cite{DawsRunde}. For classical, locally compact groups there is an approximation property called Reiter's property $(P_p)$, where $p \in [1, \infty)$. The definition assumes the existence of a net of functions in $L^p(G)$ satisfying the approximation axiom of \cite[Definition 1.2]{DawsRunde}. Daws and Runde show that $(P_1)$ and $(P_2)$ can be defined for quantum groups as well and they use them to study (co-)amenability properties of quantum groups. 
 
In the final remark of \cite{DawsRunde}, Daws and Runde mention that it remains to be seen if there is a property $(P_p)$ for any $p \in [1, \infty)$. In particular, they mention that it remains unclear how the $L^p$-space associated with a quantum group should be turned into a $L^1$-module. In \cite{ForLeeSam} this is done using Izumi's $L^p$-spaces for the complex interpolation parameter $z = -1/2$, whereas in \cite{Daws} a similar, but not identical construction was used for the parameter $z = 0$. 
We believe that the Fourier transform indicates that the most natural choice would be $z = -1/2$. 

\vspace{0.3cm}
 
From now on we let $(M, \Delta)$ be the quantum group $SU_q(2)$, see \cite{Wor}, \cite{WorTwisted}, \cite{KliSch}. See also \cite{KusLec} for a concise introduction.  We recall its most important properties.  
 
We set $\cH = L^2(\mathbb{N}) \otimes L^2(\mathbb{T})$. Let $(e_i)_{i \in \mathbb{N}}$ be the canonical orthonormal basis of $L^2(\mathbb{N})$ and let $(f_k)_{k \in \mathbb{Z}}$ be the canonical orthonormal basis for $L^2(\mathbb{Z})$ (so $f_k = \zeta^k$, where $\zeta$ is the identity function on the complex unit circle $\mathbb{T}$). 
  Define operators $\alpha, \gamma$ given by:
\begin{equation}\label{EqnAlphaGamma}
\alpha \: e_i \otimes f_k = \sqrt{1-q^{2i}} e_{i-1} \otimes f_k, \qquad 
\gamma \: e_i \otimes f_k = q^i e_{i} \otimes f_{k+1}.
\end{equation}
Then, $M$ is the von Neumann algebra generated by $\alpha$ and $\gamma$, i.e. 
\[
M = B(L^2(\mathbb{N})) \otimes L^\infty(\mathbb{T}) \simeq  L^\infty(\mathbb{T}, B(L^2(\mathbb{N}))).
\]
 For $x = x(t) \in L^\infty(\mathbb{T}, B(L^2(\mathbb{N})))$, both the left and right Haar weight are given by the state
\[
\varphi(x) =  \frac{(1-q^2)}{2\pi} \int_{\mathbb{T}} \sum_{i=0}^\infty q^{2i}   \langle x(t) e_{i}, e_i \rangle dt.
\]

Next, we need Peter-Weyl theory for $SU_q(2)$. 
Recall \cite{KliSch}  that for every $l \in \frac{1}{2} \mathbb{N}$, there exists a unique irreducible corepresentation $t^{(l)} \in M \otimes M_{2l+1}(\mathbb{C})$.
In fact, these are all the irreducible corepresentations of $(M, \Delta)$ and we have a Peter-Weyl decomposition
\[
W \simeq \bigoplus_{l \in\frac{1}{2} \mathbb{N}}  t^{(l)} \otimes 1_{2l+1} \quad  (\quad  \in    M \otimes \bigoplus_{l \in \mathbb{N}} M_{2l+1}(\mathbb{C}) \otimes M_{2l+1}(\mathbb{C}) \quad ).
\]
So every corepresentation $t^{(l)}$ appears $2l+1$ times in the multiplicative unitary.
 
Let $g^{(l)}_{-l}, g^{(l)}_{-l+1}, \ldots, g^{(l)}_l$ denote the standard basis vectors of $\mathbb{C}^{2l+1}$. Let us denote $t^{(l)}_{i,j}$ for the matrix elements $(\iota \otimes \omega_{g^{(l)}_j, g^{(l)}_i})(t^{(l)})$. For every $l\in \frac{1}{2}\mathbb{N}$, there exists a unique strictly positive operator $Q^{(l)}$ such that we have orthogonality relations between matrix coefficients.
\begin{equation}\label{EqnOrthogonality}
\varphi((t^{(l)}_{i,j})^\ast t^{(l')}_{i',j'}) = \delta_{l, l'} \delta_{ j, j'} \langle Q^{(l)} g^{(l)}_i, g^{(l)}_{i'}\rangle 
\end{equation}
In fact, with respect to the basis $g^{(l)}_i$, the matrix $Q^{(l)}$ is diagonal. It follows that $\hat{M} \simeq \oplus_{l \in \frac{1}{2} \mathbb{N}} M_{2l+1}(\mathbb{C})$. We put $Q = \oplus_{l \in \frac{1}{2} \mathbb{N}}  Q^{(l)}$, so that $Q$ is affiliated with $\hat{M}$. Moreover, 
\begin{equation}\label{EqnModularQ}
\hat{\sigma_t}(x) = Q^{-it} x Q^{it}, \qquad x \in \hat{M}.
\end{equation}
 
Finally, the following two matrix coefficients will play an essential role in the proof of the main theorem of this section. It follows from \cite[Chapter 4]{KliSch} that for $n \in \mathbb{N}$,
\[
 t^{(n/2)}_{n/2, n/2}  = \alpha^n, \qquad 
 t^{(n/2)}_{-n/2, -n/2}  = (\alpha^\ast)^n.
\]
 
Recall the notational conventions from Section \ref{SectLp}.  

\begin{thm}
Consider $(M, \Delta) = SU_q(2)$ and let $z, z' \in \mathbb{C}$. Let $\mathcal{F}_1: M_\ast \rightarrow \hat{M}$ be defined as in (\ref{EqnLambda}). Suppose that there is bounded map $F_2: L^2_{(z)}(M) \rightarrow L^2_{(z')}(\hat{M})$ making the following diagram commutative
\begin{equation}\label{EqnExistence}
\xymatrix{
&M_\ast \ar[dr]^{(i_{(-z)}^\infty)^\ast} \ar@2[rrrr]^{\mathcal{F}_1} &&&&\hat{M} \ar[dr]^{(\hat{i}_{(-z')}^1)^\ast} &\\
L_{(z)} \ar[r]^{i_{(z)}^2}\ar[ru]^{i_{(z)}^1} & L^2_{(z)}(M) \ar[r]^{\subseteq} \ar@2@(dr,dl)[rrrr]_{F_2} & L_{(-z)}^\ast & & \hat{L}_{(z')}\ar[ru]^{_{\hat{i}_{(z')}^\infty}} \ar[r]^{_{\hat{i}_{(z')}^2}} & L^2_{(z')}(\hat{M}) \ar[r]^{\subseteq}  &  \hat{L}_{(-z')}^\ast.
}
\end{equation}
Then, $z = -1/2 + it$  for some $t \in \mathbb{R}$. 
\end{thm}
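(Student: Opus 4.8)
The plan is to transport everything onto the GNS Hilbert space $\cH$ and then read off $\Re z$ from the growth rate of two families of matrix coefficients. Because $L^2_{(z)}(M)$ and $L^2_{(z')}(\hat M)$ are Hilbert spaces, Proposition~\ref{PropL2Identifications}(\ref{ItemComparisonIV}) gives isometric isomorphisms $U_{(z)}\colon L^2_{(z)}(M)\to\cH$ and $\hat U_{(z')}\colon L^2_{(z')}(\hat M)\to\hat\cH=\cH$, so the existence of a \emph{bounded} $F_2$ making (\ref{EqnExistence}) commute is equivalent to boundedness of the operator $G:=\hat U_{(z')}\circ F_2\circ U_{(z)}^{-1}$ on $\cH$. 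Commutativity of (\ref{EqnExistence}) pins $G$ down on a dense subspace: by a diagram chase in (the dual of) (\ref{EqnLpIzu}), as in the proof of Theorem~\ref{ThmFT}(\ref{ItemFTIII}), for $x\in L_{(z)}$ with $\lambda(\varphi^{(z)}_x)\in\hat L_{(z')}$ one gets $F_2(i^2_{(z)}(x))=\hat i^2_{(z')}(\lambda(\varphi^{(z)}_x))$ in $L^2_{(z')}(\hat M)$, hence $G\bigl(U_{(z)}(i^2_{(z)}(x))\bigr)=\hat U_{(z')}\bigl(\hat i^2_{(z')}(\lambda(\varphi^{(z)}_x))\bigr)$. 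I will test this with the two coefficients singled out above, $\alpha^n=t^{(n/2)}_{n/2,n/2}$ and $(\alpha^\ast)^n=t^{(n/2)}_{-n/2,-n/2}$, which lie in $\mathcal{T}_\varphi^2$ since $\varphi$ is a state (and whose images under $\lambda({}_{\,\cdot\,}\varphi)$ will be seen to lie in $\mathcal{T}_{\hat\varphi}^2\subseteq\hat L_{(z')}$, so the above indeed applies).

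For the source side I need the modular dynamics of $\alpha$. From the explicit $\varphi$ in (\ref{EqnAlphaGamma}) a direct computation gives $\sigma_w(\alpha)=|q|^{-2iw}\alpha$, hence $\sigma_w(\alpha^n)=|q|^{-2iwn}\alpha^n$ and $\sigma_w((\alpha^\ast)^n)=|q|^{2iwn}(\alpha^\ast)^n$; feeding this into Proposition~\ref{PropL2Identifications}(\ref{ItemComparisonIV}),
\[
U_{(z)}(i^2_{(z)}(\alpha^n))=\Lambda(\sigma_{-i(z/2+1/4)}(\alpha^n))=|q|^{-(z+1/2)n}\,\Lambda(\alpha^n),\qquad U_{(z)}(i^2_{(z)}((\alpha^\ast)^n))=|q|^{(z+1/2)n}\,\Lambda((\alpha^\ast)^n).
\]

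For the target side I first record the "state" form of Remark~\ref{RmkState}, valid for all $z$: a direct computation from Definition~\ref{DfnL} gives, for $a,b\in\mathcal{T}_\varphi$,
\[
\varphi^{(z)}_x(a^\ast b)=\varphi\bigl(\sigma_{i(z+1/2)}(b)\,x\,\sigma_{i(z-1/2)}(a^\ast)\bigr),
\]
which reduces to Remark~\ref{RmkState} at $z=\pm\tfrac12$. Substituting $x=\alpha^n$ and using its modular eigenvalue, repeated use of $\varphi(cd)=\varphi(d\sigma_{-i}(c))$ collapses this to $\varphi^{(z)}_{\alpha^n}=|q|^{-2(z+1/2)n}\,{}_{\alpha^n}\varphi$, and likewise $\varphi^{(z)}_{(\alpha^\ast)^n}=|q|^{2(z+1/2)n}\,{}_{(\alpha^\ast)^n}\varphi$; as in the proof of Theorem~\ref{ThmFT}(\ref{ItemFTI}), ${}_{\alpha^n}\varphi\in\mathcal{I}$ with $\hat\Lambda(\lambda({}_{\alpha^n}\varphi))=\xi({}_{\alpha^n}\varphi)=\Lambda(\alpha^n)$. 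The crucial point is that, by Peter--Weyl for $SU_q(2)$ together with the orthogonality relations (\ref{EqnOrthogonality}), $\lambda({}_{\alpha^n}\varphi)$ is a scalar multiple of the \emph{diagonal} matrix unit $e^{(n/2)}_{n/2,n/2}$ in the block $M_{n+1}(\mathbb C)$ of $\hat M$, and $\lambda({}_{(\alpha^\ast)^n}\varphi)$ a scalar multiple of $e^{(n/2)}_{-n/2,-n/2}$; since $Q$ is diagonal in this basis, (\ref{EqnModularQ}) shows both are fixed by $\hat\sigma_w$ for every $w$, so the twist in Proposition~\ref{PropL2Identifications}(\ref{ItemComparisonIV}) acts trivially and
\[
\hat U_{(z')}\bigl(\hat i^2_{(z')}(\lambda(\varphi^{(z)}_{\alpha^n}))\bigr)=|q|^{-2(z+1/2)n}\,\Lambda(\alpha^n),\qquad \hat U_{(z')}\bigl(\hat i^2_{(z')}(\lambda(\varphi^{(z)}_{(\alpha^\ast)^n}))\bigr)=|q|^{2(z+1/2)n}\,\Lambda((\alpha^\ast)^n).
\]
It is precisely the diagonality of $t^{(n/2)}_{\pm n/2,\pm n/2}$ that makes $z'$ disappear here, which is why these two coefficients are used.

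Comparing the two sides and cancelling the nonzero scalars yields $G(\Lambda(\alpha^n))=|q|^{-(z+1/2)n}\Lambda(\alpha^n)$ and $G(\Lambda((\alpha^\ast)^n))=|q|^{(z+1/2)n}\Lambda((\alpha^\ast)^n)$, where $\Lambda(\alpha^n),\Lambda((\alpha^\ast)^n)\neq0$ by faithfulness of $\varphi$. Boundedness of $G$ then forces $|q|^{-\Re(z+1/2)\,n}\le\|G\|$ and $|q|^{\Re(z+1/2)\,n}\le\|G\|$ for every $n\in\mathbb N$; since $0<|q|<1$, the first bound gives $\Re(z+1/2)\le0$ and the second gives $\Re(z+1/2)\ge0$, whence $\Re z=-1/2$, i.e.\ $z=-1/2+it$ for some $t\in\mathbb R$. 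The main obstacle is the target side: one must compute $\varphi^{(z)}_{\alpha^n}$ from Izumi's definition and, above all, verify that $\lambda({}_{\alpha^n}\varphi)$ is a diagonal matrix unit of $\hat M$ and hence $\hat\sigma$-invariant — this is the step that decouples the conclusion from $z'$ and reduces the problem to comparing two scalar exponential rates. The source side and the bookkeeping with (\ref{EqnExistence}) are routine.
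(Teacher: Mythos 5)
Your proposal is correct and follows essentially the same route as the paper's proof: transport $F_2$ to the GNS space via $U_{(z)}$ and the dual $U_{(z')}$ of Proposition \ref{PropL2Identifications}(\ref{ItemComparisonIV}), evaluate on $\alpha^n$ and $(\alpha^\ast)^n$ using $\varphi^{(z)}_x=\sigma_{-i(z+1/2)}(x)\varphi$ and the $\hat\sigma$-invariance of the diagonal matrix units (which removes $z'$), and read off the exponential rates $q^{\mp n(\mathrm{Re}(z)+1/2)}$ to force $\mathrm{Re}(z)=-1/2$. The only discrepancy is the immaterial swap of which diagonal matrix unit ($e^{(n/2)}_{n/2,n/2}$ versus $e^{(n/2)}_{-n/2,-n/2}$) corresponds to $\alpha^n$ and $(\alpha^\ast)^n$; the argument is unaffected.
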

\begin{proof}
We will prove that $F_2$ is unbounded unless $z = -1/2 + it$  for some $t \in \mathbb{R}$. We need three preparations.

Firstly, the modular automorphism group of $\varphi$ is given by $\sigma_t(x) = ( \gamma\gamma^\ast)^{it} x  (\gamma\gamma^\ast)^{-it}$. Hence, it follows from (\ref{EqnAlphaGamma}) that $\alpha  \in \mathcal{T}_\varphi$ and 
\[
\sigma_z(\alpha) = q^{-2iz} \alpha, \qquad \sigma_z(\alpha^\ast) = q^{2iz} \alpha^\ast.
\]
 
Secondly, for $a,b \in \mathcal{T}_\varphi, x \in \mathcal{T}_\varphi^2, z \in \mathbb{C}$, we find 
\[
\begin{split}
 \varphi^{(z)}_x (a^\ast b)  =& \langle x J \nabla^{\bar{z}} \Lambda(a) , J \nabla^{-z} \Lambda(b) \rangle
=
 \langle \nabla^{z+\frac{1}{2}} x  \nabla^{-z-\frac{1}{2}} \nabla J \nabla^{1/2} \Lambda( a) ,   J \nabla^{1/2}\Lambda(b) \rangle
\\ = &  \langle  \sigma_{-i(z+\frac{1}{2})} (x)  \Lambda(\sigma_{-i}(a^\ast)) , \Lambda(b^\ast) \rangle = \varphi ( b \sigma_{-i(z+\frac{1}{2})} (x)\sigma_{-i}(a^\ast))
  = \varphi (a^\ast b \sigma_{-i(z+\frac{1}{2})} (x)).
\end{split}
\]
So we conclude that  $\varphi^{(z)}_x =  \sigma_{-i(z+\frac{1}{2})} (x) \varphi$.


Thirdly, we identify $\hat{M}$ with $\oplus_{l \in \frac{1}{2}\mathbb{N}} M_{2l+1}(\mathbb{C})$. Let $e^{(n/2)}_{n/2,n/2}$ (and $e^{(n/2)}_{-n/2,-n/2}$ ) be the element of $\hat{M}$, with matrix elements equal to zero everywhere, except for the summand with index $n/2$, where it has a 1 on the upper left (respectively lower right) corner. 
 Using the Peter-Weyl orthogonality relations, we see that for every $n \in \mathbb{N}$:
\begin{equation}\label{EqnFourierAlpha}
\begin{split}
(   \varphi \otimes \iota)(W (\alpha^n \otimes 1)) =
\bigoplus_{l \in \frac{1}{2}\mathbb{N}}  (\varphi \otimes \iota) (t^{(l)}  (t^{(n/2) \ast}_{-n/2, -n/2}  \otimes 1))  =
 & \varphi((\alpha^\ast)^n \alpha^n)   e^{(n/2)}_{-n/2,-n/2},\\
  (   \varphi \otimes \iota)(W ((\alpha^\ast)^n \otimes 1) ) = 
\bigoplus_{l \in \mathbb{N}}  (\varphi \otimes \iota)( t^{(l)}  ( t^{(n/2)\ast}_{n/2, n/2}  \otimes 1)) =
& \varphi( \alpha^n(\alpha^\ast)^n)  e^{(n/2)}_{n/2,n/2},
\end{split}
\end{equation}
\[
 \varphi((\alpha^\ast)^n \alpha^n)  = \frac{(1-q^2)q^{2n}}{1-q^{2n+2}}, \qquad 
 \varphi( \alpha^n(\alpha^\ast)^n) = \frac{(1-q^2)}{1-q^{2n+2}} .
\]
By (\ref{EqnModularQ}) and the fact that $Q$ is diagonal we see that  (\ref{EqnFourierAlpha}) are in $\mathcal{T}_{\hat{\varphi}}^2$ and 
\begin{equation}\label{EqnDualModular}
\hat{\sigma}_z ( e^{(n/2)}_{-n/2,-n/2} ) = e^{(n/2)}_{-n/2,-n/2}, \qquad \hat{\sigma}_z ( e^{(n/2)}_{n/2,n/2} ) = e^{(n/2)}_{n/2,n/2} .
\end{equation}

Now we prove that $F_2$ must be unbounded by proving that the map $ U_{(z')} F_2 U_{(z)}^\ast : \cH \rightarrow \cH$ is unbounded. Recall that $U_{(z)}$ was defined in Proposition \ref{PropL2Identifications}.
\[
\begin{split}
 & F_2 U_{(z)}^\ast \Lambda(\sigma_{-i(z+1/2)/2}(\alpha^n) ) =
  F_2   i_{(z)}^2( \alpha^n  ) = 
  (\hat{i}_{(-z')}^1)^\ast F_1   i_{(z)}^1( \alpha^n  ) \\= &
 (\hat{i}_{(-z')}^1)^\ast( \varphi^{(z)}_{\alpha^n} \otimes \iota)(W) = 
(\hat{i}_{(-z')}^1)^\ast ( \varphi \otimes \iota)(W  (\sigma_{-i(z+1/2)} (\alpha^n)  \otimes 1)) \\ = & 
  q^{-2n(z+1/2)} (\hat{i}_{(-z')}^1)^\ast ( \varphi \otimes \iota)(W  (\alpha^n \otimes 1)) .
\end{split}
\]
Since $ ( \varphi \otimes \iota)(W  (\alpha^n \otimes 1)) \in \mathcal{T}_{\hat{\varphi}}^2$, we see that by commutativity of the right triangle in (\ref{EqnExistence}), 
\[
 F_2 U_{(z)}^\ast \Lambda(\sigma_{-i(z+1/2)/2}(\alpha^n) ) = 
 q^{-2n(z+1/2)} (\hat{i}_{(z')}^2)  ( \varphi \otimes \iota)(W  (\alpha^n \otimes 1)).
\]
Hence, 
\[
\begin{split}
&  U_{(z')} F_2 U_{(z)}^\ast \Lambda(\sigma_{-i(z+1/2)/2}(\alpha^n) ) = 
 q^{-2n(z+1/2)}  \hat{\Lambda}(\hat{\sigma}_{-i (z'+1/2)/2} ( \varphi \otimes \iota)(W  (\alpha^n \otimes 1))) \\ =\!\!\!\!\!\!\!\!\!\!\!\!\!\!^{(\ref{EqnFourierAlpha}), (\ref{EqnDualModular})} &  
 q^{-2n(z+1/2)}  \hat{\Lambda}( ( \varphi \otimes \iota)(W  (\alpha^n \otimes 1))) =  
 q^{-2n(z+1/2)} \xi(\alpha^n \varphi) =  
q^{-2n(z+1/2)} \Lambda( \alpha^n  ).
\end{split}
\]
Hence,
\[
 U_{(z')} F_2 U_{(z)}^\ast:  \Lambda(\alpha^n) = q^{n(z+1/2)} \Lambda(\sigma_{-i(z+1/2)/2}(\alpha^n) ) \mapsto q^{-n(z+1/2)} \Lambda(\alpha^n). 
\]
which is unbounded in case ${\rm Re}(z) > -1/2$.

 By a similar computation, one finds that
\[
 U_{(z')} F_2 U_{(z)}^\ast:  \Lambda((\alpha^\ast)^n) \mapsto q^{n(z+1/2)} \Lambda((\alpha^\ast)^n).
\]
In this case we see that  $U_{(z')} F_2 U_{(z)}^\ast$ is unbounded for ${\rm Re}(z) < -1/2$. 
\end{proof}

\begin{rmk}
By Pontrjagin duality also the dual statement holds. So in order to get a proper Fourier theory on quantum groups with Fourier transforms and inverse Fourier transforms, one is obliged to take the interpolation parameter on both $M$ and $\hat{M}$ to be $-1/2$.
\end{rmk}

\appendix

\section{}
We have not found an explicit proof of the following lemma's  in the literature. For completeness, we prove them here. Here $M$ is a von Neumann algebra with normal, semi-finite, faithful weight $\varphi$.

The following lemma is a variant of \cite[Lemma 9]{TerpII}. 

\begin{lem}\label{LemApprox}
 Let $\delta >0$. There exists a  net $(e_j)_{j \in J}$ in $\mathcal{T}_\varphi$ such that {\rm (1)} $\Vert \sigma_z(e_j) \Vert \leq e^{\delta {\rm Im}(z)^2}$,  {\rm (2)} $e_j \rightarrow 1$ strongly and  {\rm (3)}  $\sigma_{i/2}(e_j) \rightarrow 1$ $\sigma$-weakly. 
\end{lem}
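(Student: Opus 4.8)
The goal is to construct, for a given $\delta > 0$, a net $(e_j)_{j \in J}$ in $\mathcal{T}_\varphi$ satisfying the three listed properties. The natural approach mimics the standard mollification technique used in \cite[Lemma 9]{TerpII}: start from an approximate unit and smooth it out against a Gaussian with respect to the modular automorphism group, but with a Gaussian whose width is tuned to the prescribed $\delta$.

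\begin{proof}
Let $(u_j)_{j \in J}$ be a net in $M$ with $\Vert u_j \Vert \leq 1$, $u_j \geq 0$ and $u_j \to 1$ strongly (for instance an approximate unit for a $\sigma$-strongly dense C$^\ast$-subalgebra, or simply a net increasing to $1$; one may even take a net in $\nphi \cap \nphi^\ast$). Fix $a>0$ with $\frac{1}{4a} \leq \delta$, i.e. $a \geq \frac{1}{4\delta}$, and define
\[
e_j = \sqrt{\frac{a}{\pi}} \int_{-\infty}^{\infty} e^{-a t^2}\, \sigma_t(u_j)\, dt,
\]
where the integral is taken in the $\sigma$-strong sense. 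By standard arguments (see the proof of \cite[Lemma 9]{TerpII} or \cite[Chapter VI]{TakII}) $e_j$ is analytic with respect to $\sigma$, with
\[
\sigma_z(e_j) = \sqrt{\frac{a}{\pi}} \int_{-\infty}^{\infty} e^{-a t^2}\, \sigma_{t+z}(u_j)\, dt
= \sqrt{\frac{a}{\pi}} \int_{-\infty}^{\infty} e^{-a (s-\mathrm{Re}(z))^2}\, e^{\text{(terms)}}\, \sigma_{s + i\,\mathrm{Im}(z)}(u_j)\, ds,
\]
and the usual contour-shift computation gives the norm bound
\[
\Vert \sigma_z(e_j) \Vert \leq \sqrt{\frac{a}{\pi}} \int_{-\infty}^{\infty} e^{-a t^2 + 2a\, \mathrm{Im}(z)\, t}\, dt \cdot \Vert u_j \Vert
= e^{a\, \mathrm{Im}(z)^2}\, \Vert u_j \Vert \leq e^{a\, \mathrm{Im}(z)^2}.
\]
Wait — this yields the exponent $a\,\mathrm{Im}(z)^2$, so to obtain the bound $e^{\delta\, \mathrm{Im}(z)^2}$ we must instead choose $a \leq \delta$; then $\Vert \sigma_z(e_j) \Vert \leq e^{a\,\mathrm{Im}(z)^2} \leq e^{\delta\,\mathrm{Im}(z)^2}$, which is (1).

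It remains to check that $e_j \in \mathcal{T}_\varphi$ and that (2) and (3) hold. For membership in $\mathcal{T}_\varphi$ one must show $\sigma_z(e_j) \in \nphi \cap \nphi^\ast$ for all $z$; this follows from the fact that, after replacing $u_j$ by an element of $\nphi \cap \nphi^\ast$ if necessary, the Gaussian average of such an element again lies in $\nphi \cap \nphi^\ast$, using that $\Lambda$ is $\sigma$-strong-$\ast$/norm closed and $\Lambda(\sigma_z(e_j)) = \sqrt{a/\pi}\int e^{-at^2}\nabla^{it}\nabla^{iz}\Lambda(u_j)\,dt$ converges as a Bochner integral; alternatively one invokes directly the construction in \cite[Lemma 9]{TerpII}, which produces elements of $\mathcal{T}_\varphi$. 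For (2): since $u_j \to 1$ strongly and $\sigma_t$ is strongly continuous with $\Vert \sigma_t(u_j)\Vert \leq 1$ uniformly, dominated convergence for the strong-operator-valued integral gives $e_j \to \sqrt{a/\pi}\int e^{-at^2}\,dt \cdot 1 = 1$ strongly. For (3): by the explicit formula $\sigma_{i/2}(e_j) = \sqrt{a/\pi}\int e^{-a(t+i/2)^2}\sigma_t(u_j)\,dt$; expanding $e^{-a(t+i/2)^2} = e^{a/4}e^{-at^2}e^{-iat}$ and using $\sigma_t(u_j)\to 1$ $\sigma$-weakly (uniformly bounded) together with $\sqrt{a/\pi}\,e^{a/4}\int e^{-at^2}e^{-iat}\,dt = 1$, dominated convergence in the $\sigma$-weak topology yields $\sigma_{i/2}(e_j) \to 1$ $\sigma$-weakly. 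The only delicate point is justifying the interchange of the integral with the $\sigma$-weak (resp. strong) limit, which is handled by the uniform norm bound $\Vert \sigma_t(u_j)\Vert \leq 1$ and the integrability of the Gaussian; this is the main, though routine, obstacle. With $a = \delta$ (or any $a \leq \delta$) all three properties hold, completing the proof.
\end{proof}
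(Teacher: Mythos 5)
Your construction is the same Gaussian regularization that underlies the paper's proof: the paper simply cites \cite[Lemma 9]{TerpII} for properties (1) and (2) (the net there is exactly a Gaussian smoothing of a bounded net $f_j \to 1$), and only verifies (3) by hand. Your choice $a \leq \delta$ and the contour-shift estimate for (1) are correct (your first guess $a \geq 1/(4\delta)$ is corrected in the text, and the sign discrepancy $e^{-a(t+i/2)^2}$ versus $e^{-a(t-i/2)^2}$ is immaterial since it only changes a phase and the total integral is $1$ either way). The one step that is not sound as written is precisely the one you flag and then dismiss as routine: you invoke ``dominated convergence'' to pass the strong, resp.\ $\sigma$-weak, limit of a \emph{net} through the integral. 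Dominated convergence is a theorem about sequences; for a net $(u_j)$, pointwise convergence of $t \mapsto \omega(\sigma_t(u_j))$ together with a uniform Gaussian dominating function does \emph{not} imply convergence of the integrals (index a net by the finite subsets $F \subseteq [0,1]$ and take the function equal to $0$ on $F$ and $1$ elsewhere: it tends to $0$ pointwise, is bounded by $1$, yet all integrals equal $1$). So ``uniform norm bound plus integrability of the Gaussian'' is not by itself a justification.

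The gap is easily repaired, and the paper's proof of (3) shows the cleanest way: write $\langle \sigma_{i/2}(e_j)\xi, \xi\rangle = \theta(u_j)$, where $\theta = \sqrt{a/\pi}\int e^{-a(t-i/2)^2}\, (\omega_{\xi,\xi}\circ\sigma_t)\, dt$ is a \emph{fixed} element of $M_\ast$ (a Bochner integral in the predual, using norm continuity of $t \mapsto \omega_{\xi,\xi}\circ\sigma_t$ and the Gaussian decay); then $\theta(u_j) \to \theta(1) = \langle \xi, \xi\rangle$ follows from $\sigma$-weak convergence of the bounded net alone, and the value $\theta(1)$ is computed by the contour shift exactly as in your computation of $\sqrt{a/\pi}\, e^{a/4}\int e^{-at^2}e^{-iat}\, dt = 1$. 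For (2), replace ``dominated convergence'' by: cut off the Gaussian tails, observe that $\Vert(\sigma_t(u_j)-1)\xi\Vert = \Vert (u_j - 1)\nabla^{-it}\xi\Vert$, that $\{\nabla^{-it}\xi : \vert t\vert \leq T\}$ is norm compact, and that a bounded strongly convergent net converges uniformly on norm-compact subsets of $\cH$ (an $\varepsilon/3$ argument). With these two repairs, and the observation you already make that the $u_j$ should be taken in $\nphi \cap \nphi^\ast$ (possible by semi-finiteness of $\varphi$) so that $e_j \in \mathcal{T}_\varphi$, your argument is complete and coincides in substance with the paper's and with Terp's.
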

\begin{proof}
Let $(f_j)_{j \in J}$ and $(e_j)_{j \in J}$ be nets as in \cite[Lemma 9]{TerpII}. This lemma proves already that $(e_j)_{j \in J}$ satisfies  (1) and (2). Now, (3) follows, since for $\xi \in\cH$,
\[
\begin{split}
 \langle \sigma_{\frac{i}{2}}(e_j) \xi, \xi  \rangle = &  \omega_{\xi, \xi}\left( \sqrt{\frac{\delta}{\pi}} \int_{-\infty}^\infty e^{-\delta(t - \frac{i}{2})^2} \sigma_t(f_j) dt \right)
 =  \left( \sqrt{\frac{\delta}{\pi}} \int_{-\infty}^\infty e^{-\delta(t - \frac{i}{2})^2} (\omega_{\xi, \xi} \circ \sigma_t ) dt \right)( f_j) \\ \rightarrow & \left( \sqrt{\frac{\delta}{\pi}} \int_{-\infty}^\infty e^{-\delta(t - \frac{i}{2})^2} (\omega_{\xi, \xi} \circ \sigma_t) dt\right) ( 1) =  \sqrt{\frac{\delta}{\pi}} \int_{-\infty}^\infty e^{-\delta(t - \frac{i}{2})^2} dt \langle \xi, \xi \rangle = \langle \xi, \xi \rangle,
\end{split}
\]
where the last equality is obtained by means of the residue
formula for meromorphic functions. So
$\sigma^\varphi_{\frac{i}{2}}(e_k)$ is bounded and converges
weakly, hence $\sigma$-weakly to 1.
\end{proof}

\begin{lem}\label{LemCoreT}
$\mathcal{T}_\varphi^2$ is a $\sigma$-strong-$\ast$/norm core for $\Lambda$.
\end{lem}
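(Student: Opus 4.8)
The plan is to build, for an arbitrary $x \in \nphi$, a net $(x_j)_{j \in J}$ in $\mathcal{T}_\varphi^2$ such that $x_j \to x$ $\sigma$-strongly-$\ast$ and $\Lambda(x_j) \to \Lambda(x)$ in norm. I would do this in two stages: first approximate $x$ by its ``smeared'' versions which lie in $\nphi \cap \nphi^\ast$ and are analytic for $\sigma$, then multiply by a suitable approximate unit from $\mathcal{T}_\varphi$ coming from Lemma \ref{LemApprox} to land inside $\mathcal{T}_\varphi^2$.

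First I would recall the standard fact (e.g. from \cite[Chapter VI]{TakII} or as in the proof of \cite[Lemma 9]{TerpII}) that for $x \in \nphi$, setting
\[
x_n = \frac{n}{\sqrt{\pi}} \int_{-\infty}^{\infty} e^{-(nt)^2} \sigma_t(x)\, dt
\]
with the integral taken $\sigma$-strongly-$\ast$, one obtains elements analytic with respect to $\sigma$; moreover $x_n \to x$ $\sigma$-strongly-$\ast$ and, since $\Lambda$ is $\sigma$-strong-$\ast$/norm closed and $\Lambda(\sigma_t(x)) = \nabla^{it}\Lambda(x)$, one gets $\Lambda(x_n) = \frac{n}{\sqrt{\pi}}\int e^{-(nt)^2}\nabla^{it}\Lambda(x)\,dt \to \Lambda(x)$ in norm (this is a Bochner integral argument, using strong continuity of $t \mapsto \nabla^{it}$). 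However $x_n$ need not be in $\nphi^\ast$, so I would also need to know $\sigma_z(x_n) \in \nphi$ for all $z$; this holds because $\sigma_z(x_n) = \frac{n}{\sqrt{\pi}}\int e^{-n^2(t-z)^2}\sigma_t(x)\,dt$ is again a norm-convergent (after applying $\Lambda$) smearing of $\Lambda(x)$, so $\sigma_z(x_n) \in \nphi$. To get into $\nphi^\ast$ as well, I would instead smear $y \in \nphi \cap \nphi^\ast$; so the cleaner route is to first reduce to the case $x \in \nphi \cap \nphi^\ast$ by a separate density argument, or — more efficiently — handle everything at once with the approximate unit below. Let me take the latter route.

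Let $x \in \nphi$ and let $(e_j)_{j \in J}$ be a net as in Lemma \ref{LemApprox} (with $\delta$ fixed, say $\delta = 1$): $\|\sigma_z(e_j)\| \le e^{{\rm Im}(z)^2}$, $e_j \to 1$ strongly, $\sigma_{i/2}(e_j) \to 1$ $\sigma$-weakly. Set $z_n = (x)_n$ the Gaussian smearing above, which is analytic for $\sigma$ with all $\sigma_w(z_n) \in \nphi$, and $z_n \to x$ $\sigma$-strongly-$\ast$ with $\Lambda(z_n) \to \Lambda(x)$. Now put $x_{n,j} = e_j\, z_n\, e_j$. Since $e_j, z_n$ are analytic for $\sigma$ and $\sigma_w(e_j z_n e_j) = \sigma_w(e_j)\sigma_w(z_n)\sigma_w(e_j)$ with $\sigma_w(e_j)$ bounded and $\sigma_w(z_n) \in \nphi$, using that $\nphi$ is a left ideal and $\nphi^\ast$ a right ideal we get $\sigma_w(x_{n,j}) \in \nphi \cap \nphi^\ast$ for every $w$, so $x_{n,j} \in \mathcal{T}_\varphi$; and writing $x_{n,j} = e_j \cdot (z_n e_j)$ with both factors in $\mathcal{T}_\varphi$ (the second because $z_n e_j \in \mathcal{T}_\varphi$ as well, $\mathcal{T}_\varphi$ being closed under the right action of analytic elements) shows $x_{n,j} \in \mathcal{T}_\varphi^2$. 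For the convergence: $\Lambda(x_{n,j}) = e_j \Lambda(z_n e_j) = e_j J\sigma_{-i/2}(e_j^\ast)J\,\Lambda(z_n)$, which converges in norm to $\Lambda(x)$ by first letting $n \to \infty$ then iterating on $j$ (using $\|e_j\| \le 1$, $\|J\sigma_{-i/2}(e_j^\ast)J\| = \|\sigma_{-i/2}(e_j^\ast)\| \le e^{1/4}$ bounded, $e_j \to 1$ strongly and $\sigma_{-i/2}(e_j^\ast)^\ast = \sigma_{i/2}(e_j) \to 1$ $\sigma$-weakly hence $J\sigma_{-i/2}(e_j^\ast)J \to 1$ strongly on the relevant vector); and $e_j z_n e_j \to x$ $\sigma$-strongly-$\ast$ similarly. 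One then extracts a single net by a standard diagonal/iterated-limit argument on the product directed set, or by invoking that $\sigma$-strong-$\ast$ convergence together with norm convergence of the $\Lambda$-images is metrizable on bounded-in-$\|\cdot\|_\nphi$ pieces — but since the $x_{n,j}$ are \emph{not} uniformly bounded in $M$ (only $\|\Lambda(x_{n,j})\|$ is controlled), I would phrase the final step as: the closure of $\mathcal{T}_\varphi^2$ in the graph topology of $\Lambda$ (i.e. $\sigma$-strong-$\ast$ on the domain, norm on the image) contains $z_n$ for each $n$, hence contains $x$, which is exactly the core statement.

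The main obstacle I anticipate is the bookkeeping of \emph{two} simultaneous limits ($n$ for analyticity, $j$ for the approximate unit) while keeping track of the correct topologies — norm on $\Lambda$-images requires genuine norm estimates, not merely weak convergence, so I must use the quantitative bound $\|\sigma_{-i/2}(e_j^\ast)\| \le e^{1/4}$ from Lemma \ref{LemApprox} together with strong (not just $\sigma$-weak) convergence $e_j \to 1$ to get $J\sigma_{-i/2}(e_j^\ast)J\,\Lambda(z_n) \to \Lambda(z_n)$ in norm, and then interchange with $n \to \infty$ via a uniform estimate $\|\Lambda(z_n)\| \le \|\Lambda(x)\|$. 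Once these estimates are in place the argument is routine; everything else (that $\mathcal{T}_\varphi$ and $\mathcal{T}_\varphi^2$ are stable under the operations used, that the smearing lands in the domain) is standard Tomita--Takesaki bookkeeping.
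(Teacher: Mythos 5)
There is a genuine gap at the approximate-unit step. Your whole strategy is to produce \emph{norm} convergence of the $\Lambda$-images, but the key deduction you use for this — ``$\sigma_{i/2}(e_j)\to 1$ $\sigma$-weakly hence $J\sigma_{-i/2}(e_j^\ast)J\to 1$ strongly on the relevant vector'' — is not valid. Lemma \ref{LemApprox} only provides $\sigma$-weak convergence of $\sigma_{i/2}(e_j)$, and a bounded net converging weakly need not converge strongly; the usual upgrade via positivity is unavailable because $\sigma_{i/2}(e_j)$ is neither positive nor a contraction. Trying to force the issue directly exposes the circularity: $\Vert \Lambda(z_n e_j)-\Lambda(z_n)\Vert^2$ contains the term $\Vert J\sigma_{-i/2}(e_j^\ast)J\Lambda(z_n)\Vert^2=\varphi(e_j^\ast z_n^\ast z_n e_j)$, whose convergence to $\Vert\Lambda(z_n)\Vert^2$ is exactly the strong convergence you are trying to prove. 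So with the tools at hand your net $x_{n,j}=e_jz_ne_j$ satisfies $\Lambda(x_{n,j})\to\Lambda(z_n)$ only \emph{weakly} in $j$, and the direct $\sigma$-strong-$\ast$/norm approximation stalls. (A smaller quibble of the same kind: for $\sigma$-strong-$\ast$ convergence $e_jz_ne_j\to z_n$ you also need $e_j^\ast\to 1$ strongly, which Lemma \ref{LemApprox} does not state; one would take the $e_j$ self-adjoint.)

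The missing idea is the reduction the paper makes in its first sentence: the graph of $\Lambda$ (and of its restriction to $\mathcal{T}_\varphi^2$) is a linear, hence convex, subspace of $M\times\cH$, and the $\sigma$-weak/weak continuous functionals on $M\times\cH$ coincide with the $\sigma$-strong-$\ast$/norm continuous ones; by Hahn--Banach the two closures of a convex set coincide, so it suffices to show that $\mathcal{T}_\varphi^2$ is a $\sigma$-\emph{weak}/\emph{weak} core for $\Lambda$. Once you are only after weak convergence, everything you wrote goes through: the smearing gives $x_n\to x$ $\sigma$-weakly with $\Lambda(x_n)\to\Lambda(x)$ (the paper only needs this weakly, though your norm-convergence claim for the smearing itself is correct), and the weak convergence $J\sigma_{-i/2}(e_j^\ast)J\Lambda(z_n)\to\Lambda(z_n)$ coming from Lemma \ref{LemApprox} is now enough. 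Apart from this, your construction is essentially the paper's (smear, then multiply by elements of $\mathcal{T}_\varphi$); the only structural difference is that you multiply on both sides to avoid first reducing to $\nphi\cap\nphi^\ast$, whereas the paper reduces to $\nphi\cap\nphi^\ast$, smears into $\mathcal{T}_\varphi$, and then multiplies once on the left — either variant works, but only after the convexity reduction to the weak topologies.
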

\begin{proof}
It is enough to prove that $\mathcal{T}_\varphi^2$ is a $\sigma$-weak/weak core for $\Lambda$, since the $\sigma$-weak/weak continuous functionals on the graph of $\Lambda$ equal the $\sigma$-strong-$\ast$/norm continuous functionals. It is not too hard to show that $\nphi \cap \nphi^\ast$ is a $\sigma$-weak/weak core for $\Lambda$. Now, let $x \in \nphi \cap \nphi^\ast$.  Put 
\[
x_n = \frac{n}{\sqrt{\pi}} \int_{-\infty}^{\infty} e^{-(nt)^2} \sigma_t(x) dt,
\]
where the integral is taken in the $\sigma$-strong-$\ast$ sense. By standard techniques (c.f. the proof of \cite[Lemma 9]{TerpII}), $x_n \in \mathcal{T}_\varphi$ and $x_n$ converges $\sigma$-weakly to $x$. Moreover, using the the fact that $\Lambda$ is  $\sigma$-strong-$\ast$/norm closed, we obtain
\[
\Lambda(x_n) =  \frac{n}{\sqrt{\pi}} \int_{-\infty}^{\infty} e^{-(nt)^2} \nabla^{it} \Lambda(x) dt \rightarrow \Lambda(x) \qquad {\rm weakly},
\]
where the integral is a Bochner integral, c.f. \cite[Chapter VI, Lemma 2.4]{TakII}. Hence $\mathcal{T}_\varphi$ is a core for $\Lambda$. Now, let $x \in \mathcal{T}_\varphi$ and let $(e_j)_{j \in J}$ be a net in $\mathcal{T}_\varphi$ such that $e_j \rightarrow 1$ $\sigma$-weakly, c.f. Lemma \ref{LemApprox}. Then, $e_jx \in \mathcal{T}_\varphi^2$ and $e_j x \rightarrow x$ $\sigma$-weakly and $\Lambda(e_jx) = e_j \Lambda(x) \rightarrow \Lambda(x)$ weakly. 
\end{proof}

\subsection*{Acknowledgement}

The author thanks Erik Koelink for useful discussions and also
many thanks to the referee for his comments and suggestions.

\end{document}